\newtheorem{Lemma}{Lemma}
\newtheorem{Theorem}{Theorem}
\newtheorem{Conjecture}{Conjecture}
\newtheorem{Definition}{Definition}
\newtheorem{Problem}{Problem}
\newtheorem{Corollary}{Corollary}
\newtheorem{Remark}{Remark}
\numberwithin{Subcase}{Case}
\DeclareMathOperator{\proj}{proj}
\newcommand{\E}{\mathbb{E}}
\renewcommand{\Re}{\mathbb R}
\newcommand{\F}{\mathcal{F}}
\renewcommand{\S}{\mathbb{S}}
\renewcommand{\c}{\mathbf{c}}
\renewcommand{\a}{\mathbf{a}}
\renewcommand{\b}{\mathbf{b}}
\newcommand{\m}{\mathbf{m}}
\newcommand{\q}{\mathbf{q}}
\newcommand{\p}{\mathbf{p}}
\renewcommand{\o}{\mathbf{o}}
\newcommand{\x}{\mathbf{x}}
\newcommand{\y}{\mathbf{y}}
\DeclareMathOperator{\conv}{conv}
\DeclareMathOperator{\area}{area}
\DeclareMathOperator{\Sarea}{Sarea}
\title{From the separable Tammes problem to extremal distributions of great circles in the unit sphere
\footnote{Keywords and phrases: spherical space, spherical cap, great sphere, great circle, totally separable packing, totally separable covering, great circle arrangement, great sphere arrangement, tiling, inradius, circumradius, spherical volume, density.  \newline \hspace*{.35cm} 2010 Mathematics Subject Classification: 52A55, 52A40, 52C15.}}
\author{K\'{a}roly Bezdek\thanks{Partially supported by a Natural Sciences and 
Engineering Research Council of Canada Discovery Grant.} and Zsolt L\'angi\thanks{Partially supported by the National Research, Development and Innovation Office, NKFI, K-119670 and BME Water Sciences \& Disaster Prevention TKP2020 Institution Excellence Subprogram, grant no. TKP2020 BME-IKA-VIZ.}
}
\date{}
\begin{document}

\maketitle

\begin{abstract}

A family of spherical caps of the 2-dimensional unit sphere $\mathbb{S}^2$ is called a totally separable packing in short, a TS-packing if any two spherical caps can be separated by a great circle which is disjoint from the interior of each spherical cap in the packing. The separable Tammes problem asks for the largest density of given number of congruent spherical caps forming a TS-packing in $\mathbb{S}^2$. We solve this problem up to $8$ spherical caps and upper bound the density of any TS-packing of congruent spherical caps in terms of their angular radius. Based on this, we show that the centered separable kissing number of $3$-dimensional Euclidean balls is $8$. Furthermore, we prove bounds for the maximum of the smallest inradius of the cells of the tilings generated by $n>1$ great circles in $\mathbb{S}^2$. Next, we prove dual bounds for TS-coverings of $\mathbb{S}^2$ by congruent spherical caps. Here a covering of $\mathbb{S}^2$ by spherical caps is called a totally separable covering in short, a TS-covering if there exists a tiling generated by finitely many great circles of $\mathbb{S}^2$ such that the cells of the tiling are covered by pairwise distinct spherical caps of the covering. Finally, we extend some of our bounds on TS-coverings to spherical spaces of dimension $>2$. 

\end{abstract}

\section{Introduction}\label{sec:intro}

The Tammes problem, which is one of the best-known problems of discrete geometry, was originally proposed by the Dutch botanist Tammes  \cite{Ta} in 1930. It is about finding the arrangement of $N$ points on a unit sphere such that it maximizes the minimum distance between any two points. Equivalently, for given $N>1$ one is asking for the largest $r$ such that there exists a packing of $N$ spherical caps with angular radius $r$ in the unit sphere. This problem is solved for several values of $N$ namely, for $N = 3,4,6,12$ by L. Fejes T\'oth \cite{LFT}; for $N = 5,7,8,9$ by Sch\"utte and van der Waerden \cite{SchWa}; for $N = 10,11$ by Danzer \cite{Da} (see also \cite{Bo}, \cite{Ha}); for $N=13, 14$ by Musin and Tarasov \cite{MuTa12}, \cite{MuTa15}; and for $N = 24$ by Robinson \cite{Ro}. For a comprehensive overview on the Tammes problem we refer the interested reader to the recent articles \cite{FGT} and \cite{Mu}.

In this paper we start our investigations with the Tammes problem for totally separable packings (Problem~\ref{separable-Tammes-1}) and then we turn our attention to the closely related problems on extremal distributions of great circles and totally separable coverings (Problems~\ref{separable-Tammes-2} and~\ref{separable-coverings}). Finally, we conclude our investigations with some higher dimensional analogue statements. We note that totally separable packings and coverings have been intensively studied in Euclidean (resp., normed) spaces (\cite{Bez}, \cite{BeSzSz}, \cite{BeNa}, \cite{BeKhOl}, \cite{BezLan}, \cite{BeLa19}, \cite{BeLa20}, \cite{Be21}, \cite{FeFe}, \cite{FeFe87}, \cite{Ke}), but not yet in spherical spaces, which is the main target of this paper. The details are as follows. 

Let $\S^2=\{\mathbf{x}\in\E^{3} \ |\ \|\mathbf{x}\|=\sqrt{\langle\mathbf{x},\mathbf{x}\rangle}=1\}$ be the unit sphere centered at the origin $\o$ in the $3$-dimensional Euclidean space $\E^{3}$, where $\|\cdot\|$ and $\langle \cdot , \cdot \rangle$ denote the canonical Euclidean norm and the canonical inner product in $\E^{3}$. A {\it great circle} of $\S^2$ is an intersection of $\S^2$ with a plane of $\E^{3}$ passing through $\o$.  Two points are called {\it antipodes} if they can be obtained as an intersection of $\S^2$ with a line through $\o$ in $\E^{3}$. If $\a ,\b\in\S^2$ are two distinct points that are not antipodes, then we label the (uniquely determined) shortest geodesic arc of $\S^2$ connecting $\a$ and $\b$ by $\widehat{\a\b}$. In other words, $\widehat{\a\b}$ is the shorter circular arc with endpoints $\a$ and $\b$ of the great circle $\a\b$ that passes through $\a$ and $\b$. The length of $\widehat{\a\b}$ is called the spherical distance between $\a$ and $\b$ and it is labelled by $l(\widehat{\a\b})$, where $0<l(\widehat{\a\b})<\pi$. If $\a,\b\in\S^2$ are antipodes, then we set $l(\widehat{\a\b})=\pi$. Let $\x\in\S^2$ and $r\in (0,\frac{\pi}{2}]$. Then the set $C[\x, r]:=\{\y\in\S^2\ |\ l(\widehat{\x\y})\leq r\}=\{\y\in\S^2\ |\ \langle\y ,\x\rangle\geq\cos r\}$
is called the (closed) {\it spherical cap}, centered at $\x$ having angular radius $r$ in $\S^2$. $C[\x, \frac{\pi}{2}]$ is called a (closed) {\it hemisphere}. A {\it packing} of spherical caps in $\S^2$ is a family of spherical caps having pairwise disjoint interiors. 

\subsection{On the densest TS-packings by congruent spherical caps in $\S^2$}

The following definition is a natural extension to $\S^2$ of the Euclidean analogue notion, which was introduced by G. Fejes T\'oth and L. Fejes T\'oth \cite{FeFe} and has attracted significant attention.

\begin{Definition}
A family of spherical caps of $\S^2$ is called a {\rm totally separable packing} in short, a {\rm TS-packing} if any two spherical caps can be separated by a great circle of $\S^2$ which is disjoint from the interior of each spherical cap in the packing. 
\end{Definition}

Now, we raise the immediate analogue of the Tammes problem for TS-packings as follows.

\begin{Problem}\label{separable-Tammes-1}
{\bf (Separable Tammes Problem)}
For given $k>1$ find the largest $r>0$ such that there exists a TS-packing of $k$ spherical caps with angular radius $r$ in $\S^2$. Let us denote this $r$ by $r_{\rm STam}(k, \S^2)$.
\end{Problem}

\begin{Remark}\label{central-symmetry}
Let $\mathcal{P}$ be a TS-packing of an odd number of spherical caps of angular radius $r$ in  $\S^2$. Then there exists a family $\mathcal {C}$ of great circles of minimal cardinality such that any two caps of $\mathcal{P}$ can be separated by a great circle of $\mathcal {C}$ without intersecting the interior of any spherical cap of $\mathcal{P}$. Hence, the great circles of $\mathcal {C}$ dissect $\S^2$ into an even number of $2$-dimensional cells forming an $\o$-symmetric tiling such that each cap of $\mathcal{P}$ is contained in exactly one cell and no two caps of $\mathcal{P}$ belong to the same cell. As $\mathcal{P}$ is a packing of an odd number of caps therefore, one can always add an additonal cap of angular radius $r$ to $\mathcal{P}$ and thereby obtain a TS-packing of even number of caps. Thus, it follows that for any integer $k'>1$, we have $r_{\rm STam}(2k'-1, \S^2)= r_{\rm STam}(2k', \S^2)$.
\end{Remark}

Consider three mutually orthogonal great circles on $\S^2$. These divide the sphere into eight regular spherical triangles of side lengths $\frac{\pi}{2}$. The family of inscribed spherical caps of these triangles is a TS-packing of eight spherical caps of radius $\arcsin \frac{1}{\sqrt{3}} (\approx 35.26^{\circ})$. We call such a family an \emph{octahedral TS-packing} (Figure~\ref{octahedral}). Similarly, the side lines of a regular spherical triangle of side length $\arccos \frac{1}{4} \approx 75.52^{\circ}$ divide the sphere into two regular spherical triangles of side length $\arccos \frac{1}{4}$, and six isosceles spherical triangles of side lengths $\pi - \arccos \frac{1}{4}, \pi - \arccos \frac{1}{4}$ and $\arccos \frac{1}{4}$. 
The inscribed spherical caps of the six isosceles triangles form a TS-packing of six spherical caps of radius $\arctan \frac{3}{4} \approx 36.87^{\circ}$, which we call a \emph{cuboctahedral TS-packing} (Figure~\ref{cuboctahedral}).

\begin{figure}[h]
\begin{minipage}[t]{0.48\linewidth}
\centering
\includegraphics[width=0.7\textwidth]{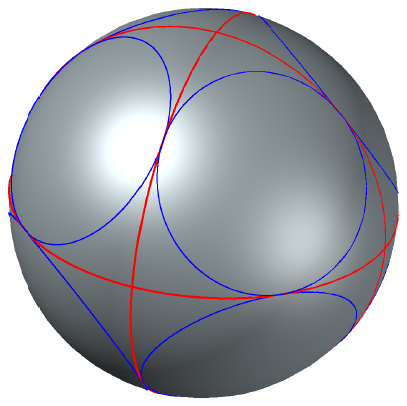}
\caption{An octahedral TS-packing in $\S^2$.}
\label{octahedral}
\end{minipage}
\begin{minipage}[t]{0.48\linewidth}
\centering
\includegraphics[width=0.7\textwidth]{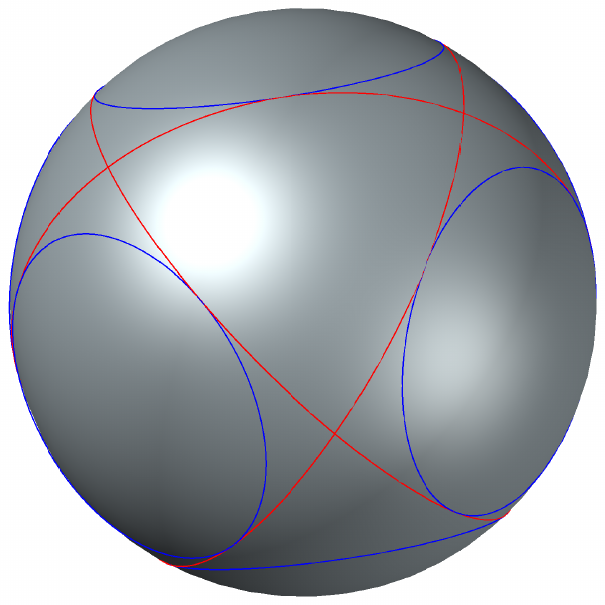}
\caption{A cuboctahedral TS-packing in $\S^2$.}
\label{cuboctahedral}
\end{minipage}
\end{figure}

We leave the easy proofs of $r_{\rm STam}(2, \S^2)=\frac{\pi}{2}(=90^{\circ})$ and $r_{\rm STam}(3, \S^2)=r_{\rm STam}(4, \S^2)=\frac{\pi}{4}(=45^{\circ})$ to the reader. Here we solve Problem~\ref{separable-Tammes-1} for $k=5,6,7,8$ moreover, bound $r_{\rm STam}(k, \S^2)$ for $k>8$ as follows.

\begin{Definition}
Let $k >1$ be fixed. A TS-packing of $k$ spherical caps of radius $r_{\rm STam}(k, \S^2)$ is called \emph{$k$-optimal}.
\end{Definition}

\begin{Theorem}\label{thm:few_caps}
For $5 \leq k \leq 6$ we have $r_{\rm STam}(k, \S^2)= \arctan \frac{3}{4}$, and any $k$-optimal TS-packing is a subfamily of a cuboctahedral TS-packing. Furthermore, for $7 \leq k \leq 8$ we have $r_{\rm STam}(k, \S^2)= \arcsin \frac{1}{\sqrt{3}}$, and any $k$-optimal TS-packing is a subfamily of an octahedral TS-packing.\footnote{After completing this manuscript, G. Fejes T\'oth has informed us that in 1981 in a seminar talk of K. B\"or\"oczky the problem of finding  $r_{\rm STam}(k, \S^2)$ for $k\leq 12$ was raised and investigated. Unfortunately, that talk has not been published. Furthermore, we thank G. Fejes T\'oth for sending us the article \cite{Va} of \'E. V\'as\'arhelyi that proves Part (ii) of Theorem~\ref{main4}.}
\end{Theorem}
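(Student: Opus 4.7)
By Remark~\ref{central-symmetry}, it is enough to treat the even cases $k=6$ and $k=8$, with the odd cases $k=5,7$ following by adjoining an extra cap. Let $\mathcal{P}$ be a $k$-optimal TS-packing of angular radius $r$, and let $\mathcal{C}$ be a minimal family of $n$ great circles separating every pair of caps in $\mathcal{P}$. As observed in Remark~\ref{central-symmetry}, $\mathcal{C}$ generates an $\o$-symmetric tiling $T$ of $\S^2$ with $2m\geq k$ cells, each containing at most one cap. Hence $r$ is at most the $k$-th largest inradius among the cells of $T$, and the proof reduces to maximizing this quantity over all admissible arrangements $\mathcal{C}$.

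The core case is $n=3$ in general position, in which $\mathcal{C}$ partitions $\S^2$ into $8$ spherical triangles forming $4$ antipodal pairs. Let $\alpha,\beta,\gamma\in(0,\pi)$ denote the three pairwise dihedral angles; the four triangle shapes carry angle triples $(\alpha,\beta,\gamma)$, $(\pi-\alpha,\pi-\beta,\gamma)$, $(\pi-\alpha,\beta,\pi-\gamma)$ and $(\alpha,\pi-\beta,\pi-\gamma)$, with inradii $\rho_1,\rho_2,\rho_3,\rho_4$ computable from the classical formula $\tan^2\rho=\sin(s-a)\sin(s-b)\sin(s-c)/\sin s$. For $k=8$ we need $\min_i \rho_i\geq r$; I would show via a perturbation argument that moving $(\alpha,\beta,\gamma)$ away from $(\pi/2,\pi/2,\pi/2)$ strictly decreases at least one of the $\rho_i$, so that the unique maximizer is the mutually orthogonal arrangement, yielding $r\leq \arcsin(1/\sqrt{3})$ with equality only for the octahedral configuration. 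For $k=6$ we may discard the shape of smallest inradius and so we maximize the third-smallest $\rho_i$; I expect a Lagrange-type analysis to force three of the four $\rho_i$ to be equal at the extremum, which by the structure of the four angle triples forces $\alpha=\beta=\gamma$ and reduces the problem to a single-variable optimization that peaks at $\alpha=\arccos(1/5)$, giving $r\leq \arctan(3/4)$ with equality only for the cuboctahedral configuration.

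The remaining regimes must be ruled out. The cases $n\leq 2$ yield at most $4$ cells, insufficient already for $k=5$. Degenerate $n=3$ arrangements, in which the three great circles share an antipodal pair, produce $6$ lunes whose angular widths sum to $2\pi$ (with antipodal lunes equal), so the smallest inradius is at most $\pi/6$, well below both claimed bounds; and this regime has too few cells for $k\geq 7$. For $n\geq 4$ the tiling has at least $14$ cells; I plan to exclude this case by combining the bound on the smallest inradius of an $n$-great-circle tiling proved later in the paper (Theorem~\ref{main4} referenced in the footnote) with an arc-length argument on a fixed $C\in\mathcal{C}$: the cells containing caps meet $C$ in pairwise disjoint arcs of length bounded below in terms of $r$, and for $n\geq 4$ their total length is forced to exceed $2\pi$.

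The main obstacle is the three-variable max-min analysis in the $n=3$ case. Because the four inradii are not jointly symmetric functions of $(\alpha,\beta,\gamma)$ and the identity of the smallest shape depends on the parameter regime, pinning down the global extremizer and verifying its uniqueness requires a careful enumeration of cases. Once this is achieved, the uniqueness clauses of the theorem follow directly from the equality cases of the optimizations, yielding the stated characterization of $k$-optimal TS-packings as subfamilies of the octahedral or cuboctahedral tilings.
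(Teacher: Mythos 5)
There are genuine gaps here, and the most serious one is conceptual rather than computational. Throughout your $n=3$ analysis you take the classical formula $\tan^2\rho=\sin(s-a)\sin(s-b)\sin(s-c)/\sin s$ as ``the inradius'' of each triangle, but on $\S^2$ this gives only the \emph{inscribed} radius $\rho^i(T)$, which can be strictly smaller than the true inradius $\rho(T)$: the largest cap contained in $T$ may be the incircle of a lune generated by $T$ (this happens exactly when some colunar triangle has perimeter less than $\pi$). Since the packing constraint is $r\le\rho(T)$ and $\rho(T)\ge\rho^i(T)$, maximizing a min of inscribed radii does not yield an upper bound on $r$. The paper devotes Lemma~\ref{lem:inradius} to exactly this point and then splits the optimization into the cases $a+b+c\le\pi$ (where the relevant inradii are half-angles and the Discrete Isoperimetric Inequality applies) and $a+b+c>\pi$ (where inradius $=$ inscribed radius and log-concavity of $\sin$ finishes the job). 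Your proposal never addresses this dichotomy, and in addition the central three-variable max--min analysis is only announced (``I would show\dots'', ``I expect a Lagrange-type analysis\dots''), not carried out; for $k=6$ the quantity to maximize should moreover be the \emph{second}-smallest of the four inradii (both empty cells must form an antipodal pair to discard a whole shape), not the third-smallest.

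The second gap is the exclusion of separating families with $n\ge4$ great circles. The theorem you invoke, Theorem~\ref{main4}, concerns circumradii of cells (coverings), not inradii, and even Theorem~\ref{main3}(iv) would not apply because it assumes \emph{every} cell contains a cap of radius $r$, whereas here only $k\le 8$ of the at least $14$ cells are occupied. The proposed arc-length argument also fails at the first step: there is no reason why every cell containing a cap should meet a fixed great circle $C\in\mathcal{C}$. The paper sidesteps this entire enumeration over $n$: for $k=5$ it greedily selects three separating circles to produce a closed hemisphere cut by two half great circles into four triangles, three of which contain caps, and optimizes over that local configuration; for $k=7$ it finds a hemisphere with four caps, distinguishes whether some circle splits them $2$--$2$ or every circle splits them $1$--$3$, and in either case reduces to four regions of total area $2\pi$ where the spherical Dowker theorem gives $\rho\le\arcsin\frac{1}{\sqrt3}$ with the octahedral equality case. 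I would recommend adopting that local strategy, or at minimum supplying a correct argument for $n\ge 4$ and reworking the $n=3$ optimization with the true inradius.
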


\begin{Definition}\label{defn:density}
Let $\F_m = \{ S_1, S_2, \ldots, S_m \}$ be a TS-packing of spherical caps of radius $\rho < \frac{\pi}{4}$ on $\S^2$. The \emph{density} $\delta(\F_m)$ of $\F_m$ is defined as
\[
\delta(\F_m) := \frac{\Sarea(\bigcup_{i=1}^m S_i)}{\Sarea(\S^2)} = \frac{2m\pi(1-\cos \rho)}{4 \pi} = \frac{1-\cos \rho}{2} m,
\]
where $\Sarea(\cdot)$ refers to the spherical area (i.e., spherical Lebesgue measure) in $\S^2$.
\end{Definition}

\begin{Definition}\label{defn:density_TSpacking}
Let $L_1$ and $L_2$ be two great circles on $\S^2$ orthogonally intersecting at the points $\pm\p$. Let $S_1$, $S_2$, $S_3$ be three spherical caps of radius $\rho < \frac{\pi}{4}$, each touching both $L_1$ and $L_2$ such that the touching points are closer to $\p$ than $-\p$. Let $T$ denote the spherical convex hull of the centers of $S_1$, $S_2$ and $S_3$. Then we set
\[
\delta(\rho) := \frac{\Sarea(\bigcup_{i=1}^3 (S_i \cap T))}{\Sarea(T)}=\frac{1-\cos\rho}{1-\frac{\pi}{4\arcsin\left(\frac{1}{\sqrt{2}\cos\rho}\right)}}.
\]
\end{Definition}

\begin{Theorem}\label{main2}\text{}
\begin{itemize}
\item[(i)] Let $\F_m = \{ S_1, S_2, \ldots, S_m \}$ be a TS-packing of spherical caps of radius $\rho < \frac{\pi}{4}$ on $\S^2$. Then $\delta(\F_m) \leq \delta(\rho)$ and therefore $r_{\rm STam}(k, \S^2)\leq\arccos\frac{1}{\sqrt{2}\sin\left(\frac{k}{k-2}\frac{\pi}{4}\right)}$ for all $k\geq 5$. In particular, $r_{\rm STam}(8, \S^2)=\arccos\sqrt{\frac{2}{3}}$ $=\arcsin\frac{1}{\sqrt{3}}(\approx 35.26^{\circ})$.
\item[(ii)] For any sufficiently large value of $m$, we have $r_{\rm STam}(m, \S^2) \geq \frac{0.793}{\sqrt{m}}$, or equivalently, there is a TS-packing $\F_m$ of $m$ congruent spherical caps with $\delta(\F_m) \geq m \frac{1-\cos \frac{0.793}{\sqrt{m}}}{2}$, where $\lim_{m\to+\infty}m \frac{1-\cos \frac{0.793}{\sqrt{m}}}{2}=\frac{0.793^2}{4}\approx 0.16<\frac{\pi}{4}\approx 0.79$.
\end{itemize}
\end{Theorem}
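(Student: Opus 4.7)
The plan is to prove the density inequality $\delta(\F_m)\leq\delta(\rho)$ by a Fejes~T\'oth-style local density argument on a spherical tiling. Starting from the TS-packing $\F_m=\{S_1,\dots,S_m\}$, one fixes a finite family $\mathcal{L}$ of great circles that pairwise separates the caps, and, if necessary, augments it so that the resulting tiling $\mathcal{T}$ places each cap in a distinct convex cell $C_i$ bounded by arcs of circles in $\mathcal{L}$. The central claim is the local bound
\[
\frac{\Sarea(S_i \cap C_i)}{\Sarea(C_i)} \leq \delta(\rho),\qquad i = 1, \ldots, m,
\]
which, after summation over $i$ and division by $\Sarea(\S^2)$, yields $\delta(\F_m)\leq\delta(\rho)$. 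To prove this local bound, one would carry out a contraction argument: slide the bounding great circles of $C_i$ inward toward $S_i$ and verify that the ratio is monotone nondecreasing, until the cell is reduced to the extremal configuration of Definition~\ref{defn:density_TSpacking}, in which two great circles meet orthogonally at a vertex and the cap is tangent to three mutually tangent congruent neighbors. The Tammes-type inequality $r_{\rm STam}(k,\S^2)\leq\arccos\frac{1}{\sqrt{2}\sin(k\pi/(4(k-2)))}$ then follows by isolating $\rho$ from $k(1-\cos\rho)/2\leq\delta(\rho)$. For $k=8$, direct calculation gives $\sin(\pi/3)=\sqrt{3}/2$, whence $\arccos\sqrt{2/3}=\arcsin(1/\sqrt{3})$, and this bound is attained by the octahedral TS-packing already exhibited before Theorem~\ref{thm:few_caps}.

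\textbf{Part (ii).} For the constructive lower bound, the plan is to build an explicit TS-packing via a great circle arrangement. One would construct a family of $O(\sqrt{m})$ great circles whose induced spherical tiling has approximately $m$ convex cells of minimum inradius at least $0.793/\sqrt{m}$, and then inscribe a congruent spherical cap of that radius in each cell. Since the cells are pairwise separated by the great circles of the arrangement, the resulting packing is automatically TS by construction. The density formula reduces to the Taylor expansion $1-\cos r = r^2/2 + O(r^4)$, giving
\[
\lim_{m\to\infty} m\,\frac{1-\cos(0.793/\sqrt{m})}{2} \;=\; \frac{0.793^2}{4}.
\]
A natural candidate arrangement is a grid-like refinement of a spherical polyhedron (for example, obtained by iteratively bisecting faces of a regular octahedron or cuboctahedron along great circles), tuned so that the minimum inradius across all cells meets the stated constant.

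\textbf{Main obstacle.} The principal difficulty lies in the contraction argument of Part (i): one must carefully verify that the local ratio $\Sarea(S_i\cap C_i)/\Sarea(C_i)$ is monotone nondecreasing as each bounding great circle is slid inward, and that every cell type reduces to the canonical orthogonal-corner configuration of Definition~\ref{defn:density_TSpacking} rather than to some other extremal shape. Cells in which the cap does not touch all of its bounding sides, or in which adjacent caps meet the separating great circles at non-orthogonal angles, require case analysis to secure global monotonicity and to rule out alternative extremizers. For Part (ii), the secondary obstacle is engineering a concrete great circle arrangement whose minimum cell inradius realizes the constant $0.793$; this amounts to an optimization over admissible spherical arrangements, for which the above grid-refinement approach seems the most tractable route.
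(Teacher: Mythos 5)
Your Part (i) hinges on the local bound $\Sarea(S_i\cap C_i)/\Sarea(C_i)\leq\delta(\rho)$ for each cell $C_i$ of the separating tiling, and this claim is false, so no contraction argument can establish it. Concretely, take the four great circles of the cuboctahedral tiling of Part (ii) of Theorem~\ref{main4}: its six quadrangular cells are regular spherical squares circumscribed about caps of radius $\rho=\arcsin\frac{1}{\sqrt{3}}$, and inscribing such a cap in each square cell gives a perfectly valid TS-packing. The local density of a cap in its cell is then
\[
\frac{2\pi(1-\cos\rho)}{8\arccos\left(\frac{\cos\rho}{\sqrt{2}}\right)-2\pi}\approx 0.848,
\]
whereas $\delta\left(\arcsin\frac{1}{\sqrt{3}}\right)\approx 0.734$. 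The global bound survives only because the eight empty triangular cells absorb the excess; a cell-by-cell accounting on the separating tiling cannot see this redistribution. This is precisely why the paper does not decompose along the separating great circles at all: it builds the Delaunay decomposition of the \emph{centers} of the caps, modifies it to the spherical Moln\'ar ($M$-)decomposition via Lemma~\ref{lem:sphericalMolnar} to handle cells not containing their circumcenters, refines it into triangles and differences of isosceles triangles, and measures density with the angle-weighted formula (\ref{eq:celldensity}) at the center vertices. The total separability then enters locally through Lemma~\ref{lem:acute}, where the two separating great circles of a triple of nearby caps force the extremal orthogonal configuration of Definition~\ref{defn:density_TSpacking}; your tiling cells carry no such usable constraint. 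The paper also first reduces to $(2R_\rho)$-separable, $(2R_\rho)$-saturated packings so that all Delaunay circumradii are controlled, a step absent from your outline.

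For Part (ii) your plan coincides in spirit with the paper's: the paper takes two orthogonal pencils of $k$ congruent lunes of width $\frac{2\alpha}{k}$ meeting at antipodal vertex pairs, producing $2k^2$ quadrangular cells, bounds their inradii from below by $(1-\varepsilon)\frac{\alpha\cos\alpha}{k}$ via the spherical law of sines, and optimizes $\alpha\approx 49^{\circ}$ to get $\sqrt{2}\cdot\frac{49\pi}{180}\cos 49^{\circ}>0.793$. Your proposal is correct in outline but does not supply the arrangement or the optimization that produces the stated constant; as written it establishes only that \emph{some} constant of order $m^{-1/2}$ is achievable.
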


\begin{figure}[ht]
\begin{center}
\includegraphics[width=0.55\textwidth]{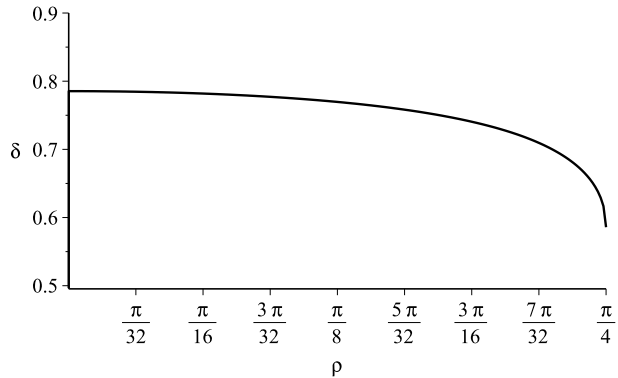}
\caption{The graph of the function $\delta(\rho)$ defined in Definition~\ref{defn:density_TSpacking}.}
\label{density}
\end{center}
\end{figure}

We note that an elementary computation, using L'Hospital's Rule, yields that $\delta(\rho)$ is strictly decreasing over $(0, \frac{\pi}{4})$ with $\lim_{\rho \to 0^{+}} \delta(\rho) = \frac{\pi}{4}$ (Figure~\ref{density}). Recall that a packing  $\mathcal {P}$ of disks is called a totally separable packing in short, a TS-packing in the Euclidean plane $\E^{2}$ if any two disks of $\mathcal {P}$ can be separated by a line of $\E^{2}$ such that it is disjoint from the interior of each disk in $\mathcal {P}$. As the proof of Theorem~\ref{main2} extends to the so-called $(2R_{\rho})$-separable packings of spherical caps of radius $\rho$ (see the relevant discussion at the beginning of the proof of Theorem~\ref{main2}), therefore a standard limiting process combined with central projection implies the following theorem of G. Fejes T\'oth and L. Fejes T\'oth \cite{FeFe}. It states that the largest density of totally separable packings of congruent disks in $\E^{2}$ is $\frac{\pi}{4}$. Thus, Theorem~\ref{main2} yields

\begin{Corollary}\label{constant density upper bound}
The density of any TS-packing of at least three congruent spherical caps in $\S^2$ is always strictly less than $\frac{\pi}{4}$, where $\frac{\pi}{4}$ is the largest density of TS-packings of congruent disks in $\E^{2}$. 
\end{Corollary}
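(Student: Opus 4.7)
The plan is to combine Theorem~\ref{main2}(i) with the monotonicity of $\delta(\rho)$ noted immediately after its statement. Since the corollary concerns TS-packings $\F_m$ of at least three congruent caps, the radius automatically satisfies $\rho \leq r_{\rm STam}(3,\S^2) = \pi/4$, and I would split the argument along the two cases $\rho < \pi/4$ and $\rho = \pi/4$.

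In the generic case $\rho \in (0, \pi/4)$, Theorem~\ref{main2}(i) gives $\delta(\F_m) \leq \delta(\rho)$. The remark just after Theorem~\ref{main2} (obtained via an elementary L'H\^opital calculation) asserts that $\delta(\rho)$ is strictly decreasing on $(0,\pi/4)$ with $\lim_{\rho \to 0^{+}}\delta(\rho) = \pi/4$. Therefore $\delta(\rho) < \pi/4$ strictly for every $\rho$ in that interval, and the desired inequality $\delta(\F_m) < \pi/4$ follows immediately.

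For the boundary case $\rho = \pi/4$, I would argue by a direct count using the classification of small optimal packings. Theorem~\ref{thm:few_caps} gives $r_{\rm STam}(5, \S^2) = \arctan(3/4) < \pi/4$, so any TS-packing of congruent caps of radius exactly $\pi/4$ has at most four members. Using the elementary density expression $\delta(\F_m) = m(1 - \cos\rho)/2$ from Definition~\ref{defn:density}, this gives
\[
\delta(\F_m) \leq 4 \cdot \frac{1 - \cos(\pi/4)}{2} = 2 - \sqrt{2},
\]
which is manifestly below $\pi/4$.

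The corollary requires no serious new work beyond what is already established: the whole difficulty has been packaged inside Theorem~\ref{main2}(i) and its accompanying monotonicity remark. The only minor subtlety is the boundary value $\rho = \pi/4$ which Definition~\ref{defn:density_TSpacking} excludes, but this is dispatched by a direct cap count via Theorem~\ref{thm:few_caps}, as shown above.
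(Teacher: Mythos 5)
Your proposal is correct and follows essentially the same route as the paper: the corollary is obtained by combining the bound $\delta(\F_m)\leq\delta(\rho)$ from Theorem~\ref{main2}(i) with the strict monotonicity of $\delta(\rho)$ on $(0,\frac{\pi}{4})$ and its limit $\frac{\pi}{4}$ at $0^{+}$. Your explicit treatment of the boundary case $\rho=\frac{\pi}{4}$ (at most four caps by Theorem~\ref{thm:few_caps}, hence density $2-\sqrt{2}<\frac{\pi}{4}$) is a welcome extra detail that the paper leaves implicit.
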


%We close this section with 
%\begin{Problem}
%Prove or disprove that $$\sup\{\delta(\F)\ |\ \F\ \text{is a TS-packing of at least 3 congruent spherical caps in}\ \S^2\}<\frac{\pi}{4}.$$
%\end{Problem}
%}
%{\color{red} 

\begin{Corollary}\label{upper bound for the separable kissing number}
As $r_{\rm STam}(12, \S^2))\leq\arccos\frac{1}{\sqrt{2}\sin\left(\frac{12}{10}\frac{\pi}{4}\right)}\approx 29.07^{\circ}$ and  $r_{\rm STam}(10, \S^2))\leq\arccos\frac{1}{\sqrt{2}\sin\left(\frac{10}{8}\frac{\pi}{4}\right)}\approx 31.74^{\circ}$, therefore the maximum number of spherical caps of angular radius $\frac{\pi}{6}$ that form a TS-packing in $\S^2$, is at most $10$.
\end{Corollary}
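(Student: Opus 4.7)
The plan is to reduce the statement to the two upper bounds already displayed, combining them with Remark~\ref{central-symmetry} and the obvious monotonicity of $r_{\rm STam}(\cdot,\S^2)$. First I would record that removing one cap from a TS-packing of $k+1$ congruent caps produces a TS-packing of $k$ congruent caps of the same angular radius, so $r_{\rm STam}(k+1,\S^2)\leq r_{\rm STam}(k,\S^2)$. Hence it suffices to exhibit a value $k_0\leq 11$ for which $r_{\rm STam}(k_0,\S^2)<\pi/6$; indeed, any TS-packing of $\geq 11$ congruent caps of angular radius $\pi/6$ would, after discarding caps, yield a TS-packing witnessing $r_{\rm STam}(k_0,\S^2)\geq \pi/6$.

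Next I would apply Theorem~\ref{main2}(i) with $k=12$ to obtain the first displayed estimate $r_{\rm STam}(12,\S^2)\leq \arccos\frac{1}{\sqrt{2}\sin(\frac{12}{10}\frac{\pi}{4})}$; a direct numerical check shows this value is strictly less than $30^{\circ}=\pi/6$, so $r_{\rm STam}(12,\S^2)<\pi/6$. To upgrade this from $12$ to $11$ I would invoke Remark~\ref{central-symmetry} with $k'=6$, which gives $r_{\rm STam}(11,\S^2)=r_{\rm STam}(12,\S^2)<\pi/6$. Setting $k_0=11$ now completes the argument.

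The second displayed estimate $r_{\rm STam}(10,\S^2)\leq \arccos\frac{1}{\sqrt{2}\sin(\frac{10}{8}\frac{\pi}{4})}\approx 31.74^{\circ}$ is recorded only to delimit the argument: since this upper bound exceeds $\pi/6$, Theorem~\ref{main2}(i) alone cannot push the conclusion below $10$, so no further step is attempted. There is no real obstacle in the proof; the only substantive point is the numerical verification that $\pi/6$ lies between the two displayed upper bounds (so the argument works for $k=12$ but not for $k=10$), and the passage from the even index $12$ to the odd index $11$, which is exactly what Remark~\ref{central-symmetry} supplies.
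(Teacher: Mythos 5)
Your argument is correct and is essentially the reasoning the paper leaves implicit: Theorem~\ref{main2}(i) with $k=12$ gives a bound below $30^{\circ}$, the odd case $k=11$ is handled via the equality $r_{\rm STam}(11,\S^2)=r_{\rm STam}(12,\S^2)$ from Remark~\ref{central-symmetry} (necessary, since the direct $k=11$ bound is about $30.33^{\circ}$ and would not suffice), and the $k=10$ bound of $31.74^{\circ}>30^{\circ}$ merely explains why the conclusion stops at $10$. Nothing further is needed.
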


The problem of finding the largest number of spherical caps of angular radius $\frac{\pi}{6}$ that form a TS-packing in $\S^2$, can be rephrased as follows.

\begin{Definition}\label{separable kissing number}
Let $\tau_{\rm{csep}}(\E^{3})$ denote the maximum cardinality of a packing $\cal{P}$ of unit balls with each unit ball touching the unit ball centred at the origin $\o$ in $\E^{3}$ such that any two unit balls of $\cal{P}$ can be separated by a plane passing through $\o$ which is disjoint from the interiors of the unit balls of $\cal{P}$. We call $\tau_{\rm{csep}}(\E^{3})$ the {\rm centered separable kissing number} of unit balls in $\E^{3}$.
\end{Definition}

\begin{Remark}\label{equivalence}
The central projection of $\E^{3}$ from $\o$ onto $\S^2$ clearly implies that $\tau_{\rm{csep}}(\E^{3})$ is equal to the maximum number of spherical caps of angular radius $\frac{\pi}{6}$ that form a TS-packing in $\S^2$.
\end{Remark}

We close this section with

\begin{Theorem}\label{3D-sep-kiss-num}\label{sub-main2}
$\tau_{\rm{csep}}(\E^{3})=8$, which can be achieved by a TS-packing of $8$ spherical caps of angular radius $\frac{\pi}{6}$ possessing octahedral symmetry in $\S^2$.
\end{Theorem}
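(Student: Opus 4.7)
I would prove $\tau_{\rm csep}(\E^3) = 8$ by establishing both inequalities. For the lower bound, I start from the octahedral TS-packing (Figure~\ref{octahedral}) of 8 congruent caps of angular radius $\arcsin\frac{1}{\sqrt{3}}$ inscribed in the regular triangles cut out by 3 mutually orthogonal great circles. Since $\pi/6 < \arcsin\frac{1}{\sqrt{3}}$, shrinking each cap to angular radius $\pi/6$ (keeping centers fixed) preserves total separability via the same 3 great circles. By Remark~\ref{equivalence}, the resulting TS-packing of 8 caps of angular radius $\pi/6$ central-projects to 8 unit balls tangent to the central unit ball in $\E^3$ that are pairwise separable by planes through $\o$ and inherit octahedral symmetry; hence $\tau_{\rm csep}(\E^3) \geq 8$.

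For the upper bound I combine Corollary~\ref{upper bound for the separable kissing number} ($\tau_{\rm csep}(\E^3) \leq 10$) with Remark~\ref{central-symmetry} ($r_{\rm STam}(9,\S^2)=r_{\rm STam}(10,\S^2)$): it suffices to rule out a TS-packing $\mathcal{P}$ of 10 congruent caps of angular radius $\pi/6$ on $\S^2$. Assuming such $\mathcal{P}$ exists, I let $\mathcal{C}$ be a minimal family of separating great circles; these dissect $\S^2$ into cells, at least 10 of which inscribe the caps of $\mathcal{P}$, forcing each such cell to have inradius $\geq \pi/6$ and forcing $|\mathcal{C}| \geq 4$ (since 3 great circles yield only 8 cells).

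The main obstacle is that Theorem~\ref{main2}(i) alone only gives $r_{\rm STam}(10,\S^2) \leq 31.74^{\circ} > \pi/6$, so the global density bound is insufficient. To obtain a contradiction I would carry out a case analysis on $|\mathcal{C}|$ and on the combinatorial type of the great circle arrangement (in particular on the presence or absence of concurrent triples of great circles), exploiting at each step that each cap-containing cell has inradius $\geq \pi/6$, that every great circle arrangement is $\o$-symmetric, and that the total spherical area is $4\pi$. Combining the per-cell inradius constraints with this area budget across all combinatorial types of arrangement should eliminate every subcase, yielding $\tau_{\rm csep}(\E^3) \leq 8$ and hence equality.
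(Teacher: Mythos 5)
Your lower bound and the reduction of the upper bound to excluding a TS-packing of $10$ caps of angular radius $\frac{\pi}{6}$ are correct and agree with the paper. The genuine gap is in the exclusion step itself: you correctly observe that Theorem~\ref{main2}(i) is too weak, but what you offer in its place is a plan ("a case analysis on $|\mathcal{C}|$ and on the combinatorial type of the great circle arrangement\dots should eliminate every subcase"), not an argument. Two concrete problems. First, the plan is not obviously finite or even well-posed: you give no upper bound on the size of a minimal separating family $\mathcal{C}$, so "all combinatorial types of arrangement" is not a controlled list. Second, the quantitative tool you propose --- per-cell inradius constraints against the total area budget $4\pi$ --- is essentially the same density-type estimate you have just declared insufficient; a cap of radius $\frac{\pi}{6}$ has area $2\pi(1-\frac{\sqrt{3}}{2})\approx 0.84$, so ten cells each containing such a cap need only account for about $8.4$ of the available $4\pi\approx 12.57$, and even the sharper Dowker-type per-cell area bounds are exactly what underlie Theorem~\ref{main2}(i). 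So the proposed route, even if carried out, has no visible mechanism for producing the contradiction.

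The paper closes this gap with a different idea. After symmetrizing to $\{C_1,\dots,C_5,-C_1,\dots,-C_5\}$, it produces two separating great circles cutting $\S^2$ into four lunes each containing $2$ or $3$ caps; in particular some lune $L$ of angle $\alpha$ contains $2$ caps and some lune $L'$ of angle $\beta=\pi-\alpha$ contains $3$ caps. Lemma~\ref{lem:thickness} (a thickness/width argument for two caps in a lune) forces $\alpha\geq 2\arcsin\frac{1}{\sqrt{3}}>\frac{\pi}{3}$, and a classification of the "stable" configurations of three caps in a lune forces $\beta\geq\frac{2\pi}{3}$; since these lower bounds sum to more than $\pi$, one gets the contradiction. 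Some argument of this one-dimensional, angle-counting flavor (rather than an area count) is what your proof is missing.
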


\subsection{Maximizing the smallest angular inradius of the cells of the tilings generated by $n$ great circles in $\S^2$.}

In this section, we raise and investigate Problem~\ref{separable-Tammes-2}, which is a variant of Problem~\ref{separable-Tammes-1}. Namely, instead of fixing the number of spherical caps, we fix the number of separating great circles. Before stating this problem and our results we recall that the {\it (angular) inradius} of a closed set lying on a closed hemisphere of $\S^2$ is the angular radius of the largest spherical cap contained in the closed set.

\begin{Problem}\label{separable-Tammes-2}
For given $n>1$ find the largest $r>0$ such that there exists an arrangement of $n$ pairwise distinct great circles in $\S^2$ with the property that each $2$-dimensional cell of the tiling of $\S^2$ generated by them contains a spherical cap of radius $r$. Let us denote this $r$ by $r_{\rm gc}(n,  \S^2)$. 
\end{Problem}

\noindent We solve Problem~\ref{separable-Tammes-2} for $n=2,3,4$ moreover, bound $r_{\rm gc}(n,  \S^2)$ for $n\geq 5$ as follows.

\begin{Definition}
Let $n >1$ be fixed. An arrangement of $n$ pairwise distinct great circles in $\S^2$ is called \emph{$n$-optimal} if each cell of the tiling of $\S^2$ generated by them contains a spherical cap of radius $r_{\rm gc}(n,  \S^2)$.
\end{Definition}

\begin{Theorem}\label{main3}
Let $\mathcal{F}_n = \{ G_1, G_2 \ldots, G_n \}$ be a family of $n$ great circles. Then,
\begin{itemize}
\item[(i)] $r_{\rm gc}(2,  \S^2)= \frac{\pi}{4}$, and $\mathcal{F}_2$ is $2$-optimal if and only if $G_1$ and $G_2$ are orthogonal;
\item[(ii)] $r_{\rm gc}(3,  \S^2) = \arcsin \frac{1}{\sqrt{3}} \approx 35.26^{\circ}$, and $\mathcal{F}_3$ is $3$-optimal if and only if $G_1$, $G_2$ and $G_3$ are mutually orthogonal;
\item[(iii)] $r_{\rm gc}(4,  \S^2)= \arcsin \frac{1}{\sqrt{5}} \approx 26.57^{\circ}$, and $\mathcal{F}_4$ is $4$-optimal if and only if three of the great circles, say $G_1, G_2$ and $G_3$ meet at the same pair of antipodes $\pm \mathbf{p}$ at angles equal to $\frac{\pi}{3}$, and $G_4$ is the polar of $\pm \mathbf{p}$ (cf. Figure~\ref{fig:thm3});
\item[(iv)] $r_{\rm gc}(n,  \S^2)\leq \arccos\left(\frac{1}{\sqrt{2}\sin\left(\frac{n}{n-1}\frac{\pi}{4}\right)}\right)$ holds for $n> 4$.
\end{itemize}
\end{Theorem}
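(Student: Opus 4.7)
Two observations unify the four parts. First, the spherical caps inscribed in the cells of a great-circle tiling of $\S^2$ always form a TS-packing: each pair of cells is separated by an arc of a tiling great circle, and no inscribed cap crosses its own cell boundary. Second, any $n$ distinct great circles on $\S^2$ create at least $2n$ cells, with the minimum $2n$ attained exactly when they are all concurrent at a single pair of antipodes (in which case the cells are $2n$ lunes); this follows by induction on $n$, since the $n$-th circle always meets the previous arrangement in at least two antipodal points and hence adds at least two new cells.

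For part (iv), the first observation lets me apply Theorem~\ref{main2}(i) to the $m\ge 2n$ inscribed caps, giving $r\le\arccos(1/(\sqrt{2}\sin(m\pi/(4(m-2)))))$. Since this right-hand side is monotone decreasing in $m$, the worst case $m=2n$ collapses $\frac{2n}{4(2n-2)}=\frac{n}{4(n-1)}$ and gives the claimed bound. Part (i) is an elementary lune count: two great circles partition $\S^2$ into four lunes of angles $\alpha,\pi-\alpha,\alpha,\pi-\alpha$, and since the angular inradius of a lune equals half its angle, the minimum inradius is $\min(\alpha,\pi-\alpha)/2\le\pi/4$ with equality iff $\alpha=\pi/2$. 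For part (ii), I would separate cases: if the three circles are concurrent, they form six lunes whose angles sum to $2\pi$, giving minimum inradius $\le\pi/6<\arcsin(1/\sqrt{3})$; otherwise Euler's formula produces eight triangular cells and Theorem~\ref{main2}(i) with $k=8$ yields the sharp bound $\arcsin(1/\sqrt{3})$. The equality characterisation follows because Theorem~\ref{thm:few_caps} identifies any $8$-optimal TS-packing as octahedral, forcing the three generating circles to be mutually orthogonal.

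Part (iii) is the technical heart. I would enumerate the concurrency patterns of four distinct great circles. If all four are concurrent, we get eight lunes and minimum inradius $\le\pi/8<\arcsin(1/\sqrt{5})$. If exactly three are concurrent at $\pm\p$ with lune angles $\alpha_1,\alpha_2,\alpha_3$ summing to $\pi$, the fourth great circle cuts each lune into two subcells. A short spherical-trigonometry computation shows that the symmetric right triangle with two right angles and third angle $\alpha$ has inradius $\arctan(\sin(\alpha/2))$, and that any asymmetric bisection of a lune strictly lowers the smaller subcell's inradius. Hence the maximum $\arctan(1/2)=\arcsin(1/\sqrt{5})$ is achieved only by taking the fourth circle polar to $\pm\p$ and $\alpha_1=\alpha_2=\alpha_3=\pi/3$. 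The remaining no-three-concurrent case yields $14$ cells by Euler (which splits as $8$ triangles and $6$ quadrilaterals from $3F\le 2E=48$), but the density bound at $m=14$ is only $\arccos(1/(\sqrt{2}\sin(7\pi/24)))\approx 27.0^{\circ}$, just above $\arcsin(1/\sqrt{5})\approx 26.57^{\circ}$, so a sharper argument is needed.

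This last subcase is the main obstacle. To close the small gap I would combine the density estimate with combinatorial data specific to great-circle arrangements---the Gauss--Bonnet identity summing cell defects to $4\pi$, the constraint that the four cell angles at each of the $12$ vertices sum to $2\pi$, and the monotonicity of the inradius of a spherical right triangle in its non-right angle---in order to force at least one cell to be strictly thinner than $\arcsin(1/\sqrt{5})$. The equality characterisation in (iii) then follows by tracking equality through every subcase and verifying that only the stated symmetric configuration saturates the bound.
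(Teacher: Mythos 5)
Your treatment of (i), (ii) and (iv) matches the paper's: (i) is the lune count, (ii) splits into the concurrent case and the generic case handled by the $k=8$ separable Tammes bound (the paper cites Theorem~\ref{thm:few_caps} directly, which also supplies the uniqueness you need for the orthogonality characterisation), and (iv) is exactly the paper's argument --- at least $2n$ cells, inscribed caps form a TS-packing, apply Theorem~\ref{main2}(i); your monotonicity-in-$m$ reduction to $m=2n$ reproduces the computation in the paper's display (5-4). In (iii), Case~2 (three circles concurrent, the fourth not) is also in the spirit of the paper, though you assert rather than prove that the polar, symmetric cut of each lune is optimal; the paper does this carefully via Lemma~\ref{lem:thickness}, reformulating the problem as bounding the radius of two nonoverlapping caps inside a lune in terms of the lune's thickness, which sidesteps the delicate question of which inscribed cap realises a spherical triangle's inradius (cf.\ Lemma~\ref{lem:inradius}).

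The genuine gap is the nondegenerate case of (iii), and you have correctly located it but not closed it. You verify that the global density bound at $m=14$ gives only about $26.97^{\circ}$, which exceeds $\arcsin\frac{1}{\sqrt{5}}\approx 26.57^{\circ}$, and then you only outline a programme (Gauss--Bonnet, vertex angle sums, monotonicity) without carrying out any estimate; nothing in your sketch actually forces a cell below the target radius. The paper's resolution is a concrete computation you are missing: by the spherical Dowker theorem, each $s_i$-gonal cell containing a cap of radius $r$ has area at least that of the regular $s_i$-gon circumscribed about such a cap, namely $a_r(k)=2k\arccos\left(\cos r\sin \frac{\pi}{k}\right)-(k-2)\pi$, which is convex in $k$; combining this with the exact cell census ($8$ triangles, $6$ quadrangles, and average side number fixed by the edge count) and Jensen's inequality yields
\[
4\pi \;\geq\; 6\left(8\arccos\left(\tfrac{1}{\sqrt{2}}\cos r\right)-2\pi\right)+8\left(6\arccos\left(\tfrac{\sqrt{3}}{2}\cos r\right)-\pi\right),
\]
which solves to $r\leq\arcsin\frac{1}{\sqrt{5}}$; equality is then excluded because it would force every triangle and every quadrangle to be regular and circumscribed about a cap of radius $r$, contradicting the existence of an edge shared by a triangle and a quadrangle. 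Without this (or an equivalent quantitative step), your proof of part (iii) --- both the inequality and the characterisation of the extremal configuration, which relies on Case~3 being strictly suboptimal --- is incomplete.
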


\begin{figure}[ht]
\begin{center}
 \includegraphics[width=0.37\textwidth]{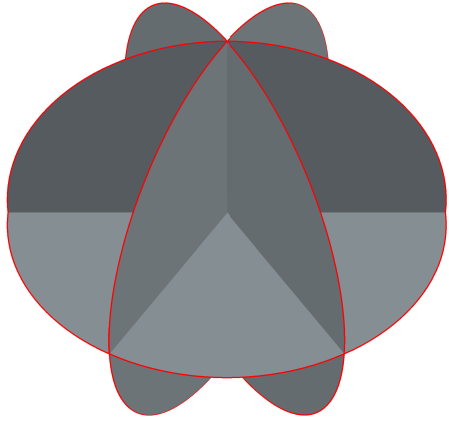}
 \caption{A $4$-optimal arrangement of great circles in $\S^2$.}
\label{fig:thm3}
\end{center}
\end{figure}

\begin{Remark}
The $n$ reflection planes of a regular (n-1)-sided right prism inscribed $\S^2$ generate $n$ great circles on $\S^2$ such that each cell of the tiling induced by those $n$ great circles contains a spherical cap of radius 
\begin{equation}\label{prism}
\arctan \left(  \sin\left(\frac{1}{n-1}\frac{\pi}{2}\right)  \right)  \leq r_{\rm gc}(n,  \S^2),
\end{equation}
where $n\geq 5$. However, for $n=6, 9, 15$ one can improve the lower bound of (\ref{prism}) by taking the reflection planes of a regular tetrahedron, octahedron, and icosahedron inscribed $\S^2$. Indeed, in these cases the relevant great circles dissect $\S^2$ into congruent spherical triangles obtained as the barycentric subdivision of the tetrahedral (Figure~\ref{tetrahedral-Coxeter}), octahedral (Figure~\ref{octahedral-Coxeter}), and icosahedral (Figure~\ref{icosahedral-Coxeter}) spherical mosaic. Thus, the angles of such a triangle $T_n$ are $\frac{\pi}{2}$, $\frac{\pi}{3}$ and $\frac{\pi}{k_n}$, where $k_6=3$, $k_9=4$ and $k_{15}=5$. For $n=6,9,15$, let $\rho_n$ denote the inradius of $T_n$. Then, an elementary computation yields that
\[
(18.44^{\circ} \approx) \arctan\frac{1}{3} = \rho_6 > \arctan \left(  \sin\left(\frac{1}{5}\frac{\pi}{2}\right) \right) \approx 17.17^{\circ},
\]
\[
(12.46^{\circ} \approx) \arctan \sqrt{\frac{2-\sqrt{2}}{12}} = \rho_9 > \arctan \left(  \sin\left(\frac{1}{8}\frac{\pi}{2}\right)  \right) \approx 11.04^{\circ}, and
\]
\[
(7.56^{\circ} \approx) \arccos \sqrt{\frac{210+12\sqrt{5}}{241}} = \rho_{15} > \arctan \left(  \sin\left(\frac{1}{14}\frac{\pi}{2}\right)  \right) \approx 6.39^{\circ}.
\]
\end{Remark}
So, it is natural to raise 
\begin{Conjecture}\label{Coxeter-type}
$r_{\rm gc}(6,  \S^2)=\rho_6, r_{\rm gc}(9,  \S^2)= \rho_9$, and $r_{\rm gc}(15,  \S^2)=\rho_{15}$.
\end{Conjecture}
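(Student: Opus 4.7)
The plan is to establish the upper bound $r_{\rm gc}(n,\S^2)\leq \rho_n$ for each $n\in\{6,9,15\}$, since the matching lower bounds are furnished by the barycentric subdivisions of the regular tetrahedron, octahedron, and icosahedron as in the preceding Remark. Let $\mathcal{F}=\{G_1,\dots,G_n\}$ be an arrangement of great circles on $\S^2$ whose induced tiling has every cell containing a spherical cap of angular radius at least $r$; the goal is to prove $r\leq \rho_n$, with equality characterizing the Coxeter arrangement.

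First I would set up the Euler and spherical-excess accounting. Writing $V$, $E$, $F$ for the numbers of vertices, edges, and cells, and $m(v)\geq 2$ for the number of great circles through a vertex $v$, we have
\begin{equation*}
E=\sum_v m(v),\qquad F=2+\sum_v (m(v)-1),\qquad \sum_v \binom{m(v)}{2}=n(n-1),
\end{equation*}
while the vertex-angle sum $\sum_j\sum_i \alpha_{j,i}=2\pi V$ together with spherical excess yields $\sum_j \Sarea(C_j)=4\pi$. These identities encode all the combinatorial freedom available to the arrangement.

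Second and most critically, I would prove a sharp area-versus-inradius inequality for the polygonal cells. Via a chord-subdivision/perturbation argument I expect to reduce to the case where every cell is a spherical triangle. For such a cell $T$ with angles $(\alpha,\beta,\gamma)$ and inradius at least $r$, classical spherical trigonometry gives both $\Sarea(T)=\alpha+\beta+\gamma-\pi$ and a relation between $r$ and $(\alpha,\beta,\gamma)$ expressible in terms of half-angle tangents. The task is to identify, among angle triples that can actually occur at vertices of some arrangement of $n$ great circles, the one minimizing $\Sarea(T)$ subject to inradius $\geq r$. I would argue this minimum is realized by the Schwarz triangle $(\pi/2,\pi/3,\pi/k_n)$ when $r=\rho_n$. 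Summing the area inequality over all cells and invoking $\sum_j \Sarea(C_j)=4\pi$ produces a lower bound on $F$ in terms of $r$, which combined with $F=2+\sum_v (m(v)-1)$ and $\sum_v \binom{m(v)}{2}=n(n-1)$ forces $r\leq \rho_n$. Tracing equality through each step compels every cell to be a Schwarz triangle of angles $(\pi/2,\pi/3,\pi/k_n)$; the rigidity of the rank-three spherical Coxeter groups $A_3$, $B_3$, $H_3$ then identifies the arrangement, up to a spherical isometry, with the Coxeter arrangement.

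The principal obstacle is the sharp area-inradius inequality for spherical triangles in the second step. The expected extremizers are asymmetric Schwarz triangles rather than equilateral triangles, so classical isoperimetric tools (which favor regular shapes) do not suffice; a customized argument, constrained by the angle-sum and multiplicity structure of great-circle arrangements, is required. A secondary difficulty is the reduction to triangular cells: one must rule out large non-triangular cells in the extremal arrangement, which is plausible by monotonicity but not automatic. Together these make the conjecture genuinely nontrivial, despite the candidate extremizers being classical and the matching lower bound being elementary.
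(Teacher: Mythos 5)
This statement is labelled a \emph{Conjecture} in the paper, and the authors do not prove it: the only content they supply is the lower bound $r_{\rm gc}(n,\S^2)\geq\rho_n$ for $n=6,9,15$, obtained from the reflection arrangements of the tetrahedron, octahedron and icosahedron in the Remark preceding the conjecture. So there is no proof in the paper against which your argument could be matched; what you have written would, if completed, settle an open problem.

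As it stands, however, your proposal is a strategy outline rather than a proof, and the two steps you yourself flag as the ``principal obstacle'' and ``secondary difficulty'' are precisely the entire mathematical content of the conjecture. The Euler/excess identities in your first step are correct but standard, and they are exactly what the authors already exploit (together with the spherical Dowker theorem and Jensen's inequality) to get their non-sharp bounds in Parts (iii)--(iv) of Theorem 3. The reason that route cannot be pushed to the conjectured value is quantitative: Dowker-type inequalities bound the area of a cell of inradius $\geq r$ from below by the area of the \emph{regular} circumscribed polygon, and for $n=6$ the resulting estimate only yields $r\lesssim 29^{\circ}$, far above $\rho_6=\arctan\frac13\approx 18.4^{\circ}$; indeed the Schwarz triangle with angles $\bigl(\frac{\pi}{2},\frac{\pi}{3},\frac{\pi}{3}\bigr)$ has area $\frac{\pi}{6}\approx 0.524$, strictly larger than the area $\approx 0.494$ of the regular triangle with the same inradius, so a per-cell area bound with regular extremizers summed over at most $n^2-n+2$ cells cannot close the gap. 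Your proposed fix --- a ``customized'' sharp area-versus-inradius inequality whose extremizers are the asymmetric Schwarz triangles, valid only for angle triples ``that can actually occur'' in an $n$-circle arrangement --- is not formulated as a precise statement, let alone proved, and the reduction to triangular cells and the equality analysis via Coxeter rigidity are likewise asserted without argument. Until those steps are supplied, the upper bound $r_{\rm gc}(n,\S^2)\leq\rho_n$ remains open, as the paper says.
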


\begin{figure}[h]
\begin{minipage}[t]{0.32\linewidth}
\centering
\includegraphics[width=0.9\textwidth]{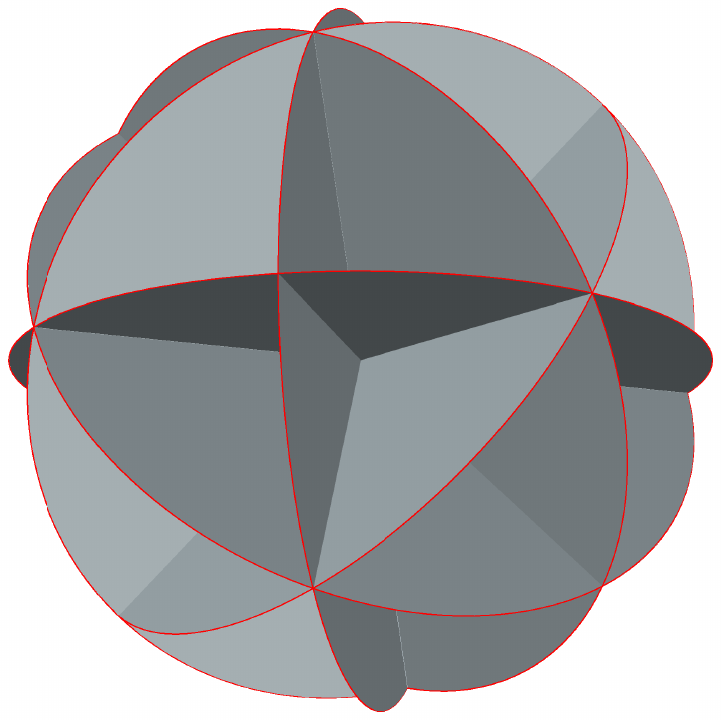}
\caption{$r_{\rm gc}(6,  \S^2)\geq\rho_6$}
\label{tetrahedral-Coxeter}
\end{minipage}
\begin{minipage}[t]{0.32\linewidth}
\centering
\includegraphics[width=0.9\textwidth]{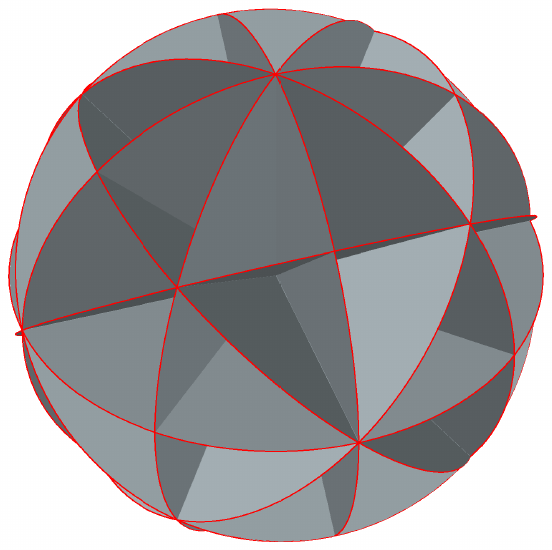}
\caption{$r_{\rm gc}(9,  \S^2)\geq\rho_9$}
\label{octahedral-Coxeter}
\end{minipage}
\begin{minipage}[t]{0.32\linewidth}
\centering
\includegraphics[width=0.85\textwidth]{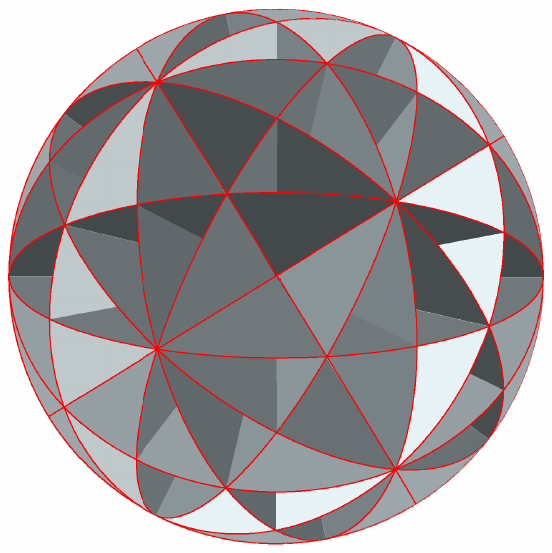}
\caption{$r_{\rm gc}(15,  \S^2)\geq\rho_{15}$}
\label{icosahedral-Coxeter}
\end{minipage}
\end{figure}

\begin{Remark}
Problem~\ref{separable-Tammes-2} suggests to raise the following close relative: for given $n>1$ {\rm minimize} the {\rm largest} inradius of the cells of tilings generated by $n$ great circles in $\S^2$. However, this problem is not new. It has been raised by L. Fejes T\'oth in \cite{FTL73} also in the following equivalent form: Let $n$ be a positive integer. Find the smallest spherical distance $\omega$ such that there exist $n$ zones of width $\omega$ covering $\S^2$. Here a zone of width $\omega$ on $\S^2$ is the set of points within spherical distance $\frac{\omega}{2}$ of a given great circle. Very recently Jiang and Polyanskii \cite{JP} have proved L. Fejes T\'oth's conjecture stating that if $n$ zones of width $\omega$ cover $\S^2$, then $\omega\geq \frac{\pi}{n}$. Ortega-Moreno \cite{OM} has found another proof, which was simplified by Zhao \cite{Zh} (see also \cite{GlKa}).
\end{Remark}

\subsection{Minimizing the largest circumradius of the cells of the tilings generated by $n$ great circles in $\S^2$}

Next, we raise and investigate Problem~\ref{separable-coverings}, which looks for the covering of $\S^2$ by congruent spherical caps of minimal angular radius under the condition that each $2$-dimensional cell of the underlying tiling generated by $n$ great circles is covered by some spherical cap of the covering. Before putting forward this question and stating our results we recall that the {\it circumradius} of a closed set lying on a closed hemisphere of $\S^2$ is the angular radius of the smallest spherical cap containing the closed set.

\begin{Problem}\label{separable-coverings}
For given $n>1$ find the smallest $R>0$ such that there exists an arrangement of $n$ great circles in $\S^2$ with the property that each $2$-dimensional cell of the tiling of $\S^2$ generated by them can be covered by a spherical cap of radius $R$. Let us denote this $R$ by $R_{\rm gc}(n, \S^2)$. 
\end{Problem}

\begin{Definition}
Let $n >1$ be fixed. An arrangement of $n$ great circles in $\S^2$ is called \emph{$n$-extremal} if each $2$-dimensional cell of the tiling of $\S^2$ generated by them can be covered by a spherical cap of radius $R_{\rm gc}(n, \S^2)$.
\end{Definition}

Clearly, $R_{\rm gc}(2, \S^2)=\frac{\pi}{2}(=90^{\circ})$. On the other hand, we have

\begin{Theorem}\label{main4} \text{}
\begin{itemize}
\item[(i)] $R_{\rm gc}(3, \S^2)= \arccos \frac{1}{\sqrt{3}}$, and a family of $3$ great circles is $3$-extremal in $\S^2$ if and only if it consists of mutually orthogonal great circles.

\item[(ii)] $R_{\rm gc}(4, \S^2)=\frac{\pi}{4}$, and a family of $4$ great circles is $4$-extremal in $\S^2$ if and only if the double normals of the great circles form the vertices of a cube inscribed $\S^2$ (Figure~\ref{fig:cuboctahedral}). 

\item[(iii)] $\arccos\left(\frac{1}{\tan\left(\left(1+\frac{2}{n^2-n+2}\right)\frac{\pi}{4}\right)  }\right)\leq R_{\rm gc}(n, \S^2) $ holds for $n> 3$.
\item[(iv)] $R_{\rm gc}(n, \S^2)$ $\leq \arcsin \left( \frac{8}{\sqrt{3} \sqrt[3]{n}} \right)$ holds for $n> 3$.
\end{itemize}
\end{Theorem}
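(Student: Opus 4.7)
The four parts are treated separately: (i) and (ii) reduce to small-case geometric analysis; (iii) uses Euler's formula together with a sharpened isoperimetric inequality; (iv) is a direct explicit construction.

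For part (i), the upper bound $R_{\rm gc}(3,\S^2)\leq\arccos(1/\sqrt{3})$ follows from the arrangement of three mutually orthogonal great circles, which tiles $\S^2$ into eight congruent spherical triangles with three right angles each, every one inscribed in a spherical cap of angular radius $\arccos(1/\sqrt{3})$ centered at $(\pm 1,\pm 1,\pm 1)/\sqrt{3}$. For the matching lower bound, I would note that Euler's formula forces exactly eight triangular cells for any three great circles in general position, and then argue via compactness and perturbation that the function assigning to a triple of great circles its largest cell circumradius attains its minimum uniquely at the orthogonal configuration (any deviation from orthogonality makes at least one triangle more oblong, strictly increasing its circumradius). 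Part (ii) is the content of V\'as\'arhelyi's paper~\cite{Va} cited in the footnote; the $4$-extremal arrangement consists of the four great circles perpendicular to the body diagonals of an inscribed cube, producing the cuboctahedral tiling of $8$ triangular and $6$ square cells, each of circumradius $\pi/4$.

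For (iii), I would begin by applying Euler's formula to see that an arrangement of $n$ great circles creates at most $F=n^2-n+2$ two-dimensional cells, so by pigeonhole the largest cell has area at least $A_0:=4\pi/(n^2-n+2)$. The remaining task is a sharp area-to-circumradius inequality for convex spherical cells of a great-circle arrangement: such a cell of area $A$ must have circumradius $R$ satisfying $\cos R \leq \cot\bigl((1+A/(2\pi))\pi/4\bigr)$, with equality attained by a specific symmetric extremal cell (expected to be a centrally symmetric spherical quadrilateral). Substituting $A=A_0$ gives exactly the claimed lower bound. The main obstacle is proving this sharpened inequality; my plan is to attempt a spherical Steiner-style symmetrization argument that first reduces to centrally symmetric extremal cells and then to a one-parameter optimization.

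For (iv), I would give an explicit construction. The stated bound $\arcsin(8/(\sqrt{3}\sqrt[3]{n}))$ is loose in rate, so the goal is to pick the simplest construction producing clean numerical constants. One natural option is to take three mutually orthogonal bundles of great circles (each bundle consisting of great circles through a common pair of antipodal poles, with the three pole-pairs mutually orthogonal) whose sizes sum to $n$, thereby producing a spherical rectangular grid. A direct estimate of the circumradius of the largest spherical rectangle produced, combined with a simple separate treatment of the degenerate cells near each pair of poles, should give the required bound after suitable optimization of the bundle sizes; the main bookkeeping step is verifying the explicit constant $8/\sqrt{3}$.
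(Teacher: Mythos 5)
The most serious problem is in your treatment of Part (iii): the per-cell inequality you propose is false. Unwinding it, the claim $\cos R \leq \cot\bigl((1+\frac{A}{2\pi})\frac{\pi}{4}\bigr)$ for a cell of area $A$ and circumradius $R$ is exactly the statement $A \leq A_R(4) = 8\arctan\left(\frac{1}{\cos R}\right)-2\pi$, i.e.\ that no cell can exceed the area of the regular quadrilateral inscribed in its circumscribed cap. But $s \mapsto A_R(s)$ is increasing, so a pentagonal cell inscribed in a cap of radius $R$ (which occurs in arrangements of $n\geq 5$ great circles) has area $A_R(5) > A_R(4)$ while having circumradius exactly $R$; your extremal cell is not a quadrilateral, and no symmetrization will rescue a false inequality. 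Consequently the pigeonhole step "the largest cell has area at least $4\pi/(n^2-n+2)$, hence large circumradius" cannot work: that largest cell may have many sides and a comparatively small circumradius. The paper's fix is global rather than local: it proves that the \emph{average} number of sides over all cells is at most $4$ (Lemma~\ref{average side number}), invokes the spherical Dowker theorem to get concavity of $s\mapsto A_R(s)$, and applies Jensen's inequality to the sum $\sum_i a_i \leq \sum_i A_R(s_i) \leq N A_R(4)$ over \emph{all} $N$ cells (Lemma~\ref{Dowker-covering}), which combined with $N\leq n^2-n+2$ gives the bound. This averaging over all cells, not the largest one, is the missing idea.

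Part (i) also has a genuine gap: a compactness-plus-perturbation argument only controls configurations near the orthogonal one, and the assertion that "any deviation from orthogonality strictly increases some circumradius" is, globally, the whole theorem; you would also need to dispose of the degenerate case where the three circles share a pair of antipodes. The paper instead uses a volume-counting argument: by B\"or\"oczky's theorem (reproved via Lemma~\ref{lem:density} and Corollary~\ref{Boroczky}), a spherical triangle of circumradius less than $\arccos\frac{1}{\sqrt{3}}$ has area strictly less than $\frac{\pi}{2}$, so eight such cells cannot tile $\S^2$; this settles both the bound and the equality case in one stroke. For Part (ii) you give no argument at all beyond citing V\'as\'arhelyi; the paper's self-contained proof is worth knowing: in the nondegenerate case the mosaic has $6$ quadrangular cells, the four diagonals of quadrangle cells lying on a common great circle have total length $2\pi$ and each has length at most $2R$, forcing $R\geq\frac{\pi}{4}$ (and the equality analysis, which the paper notes is not in \cite{Va}, is extracted from this). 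Finally, your construction for Part (iv) (three orthogonal pencils of great circles) is a legitimately different route from the paper's, which takes the great circles polar to a $\delta$-net of size $n$, shows each cell has diameter at most $2\delta$ with $\sin\delta = 4/\sqrt[3]{n}$, and applies the spherical Jung theorem; your grid would in fact give the stronger order $O(1/n)$ if the estimates were carried out, but you have not handled the delicate cells near the six poles and near the three coordinate great circles where two pencils become mutually tangent, so as written this part is a plan rather than a proof.
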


\begin{figure}[ht]
\begin{center}
\includegraphics[width=0.37\textwidth]{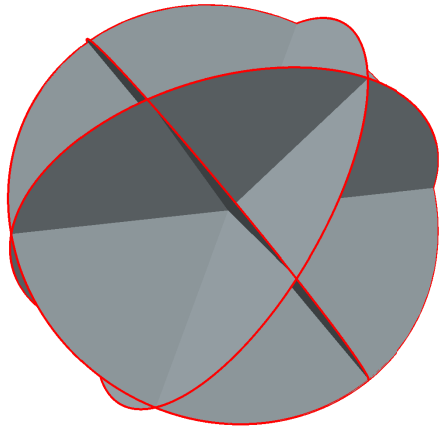}
\caption{ The largest circumradius of the $2$-dimensional cells of the cuboctahedral tiling generated by $4$ extremal great circles in Part (ii) of Theorem~\ref{main4} is equal to $\frac{\pi}{4}$. }
\label{fig:cuboctahedral}
\end{center}
\end{figure}

\subsection{On the thinnest TS-coverings by congruent spherical caps in $\S^2$}

G. Fejes T\'oth  \cite{FeFe87} introduced the notion of totally separable coverings in the Euclidean plane. We extend this concept to $\S^2$ as follows.

\begin{Definition}\label{TS-covering}
We say that a covering $\mathcal{C}$ of $\S^2$ by (congruent) spherical caps is a {\rm totally separable covering} in short, a {\rm TS-covering} if there exists a tiling $\mathcal{T}$ generated by finitely many great circles of $\S^2$ such that the $2$-dimensional cells of $\mathcal{T}$ are covered by pairwise distinct spherical caps of $\mathcal{C}$.\footnote{According to  \cite{FeFe87}, one would need also the condition that every spherical cap of $\mathcal{C}$ covers some $2$-dimensional cell of $\mathcal{T}$. However, we are interested only in lower bounding the density of TS-coverings and so, we do not impose this extra condition.}
\end{Definition}

Furthermore, we need 

\begin{Definition}\label{covering-density}
If $\mathcal{C}$ is a covering of $\S^2$ by the spherical caps $C_1,C_2,\dots , C_m$ (of angular radius $\leq \frac{\pi}{2}$), then the {\rm density} $\delta(\mathcal{C})$ of $\mathcal{C}$ is defined by
$$\delta(\mathcal{C}):=\frac{\sum_{i=1}^{m}\Sarea(C_i)}{\Sarea(\S^2)}=\frac{2m\pi(1-\cos \rho)}{4 \pi} = \frac{1-\cos \rho}{2} m .$$
\end{Definition}

\begin{Theorem}\label{thinnest-TS-covering}
Let $\mathcal{C}$ be a TS-covering of $\S^2$ by finitely many congruent spherical caps of angular radius $0<\rho<\frac{\pi}{2}$. Then
$$\Delta({\rho}):=\frac{\pi(1-\cos\rho)}{4\arctan\left(\frac{1}{\cos\rho}\right)-\pi}\leq \delta(\mathcal{C}).$$
\end{Theorem}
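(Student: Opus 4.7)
The plan is to bound from below the number $F$ of two-dimensional cells of the underlying tiling $\mathcal{T}$; since the caps assigned to distinct cells are distinct, $m := |\mathcal{C}| \geq F$ and $\delta(\mathcal{C}) = m(1-\cos\rho)/2 \geq F(1-\cos\rho)/2$, so it suffices to show $F \geq 2\pi/\bigl(4\arctan(1/\cos\rho)-\pi\bigr)$. The key observation is that the quantity $A_4(\rho) := 8\arctan(1/\cos\rho) - 2\pi$ is precisely the area of a regular spherical quadrilateral inscribed in a cap of radius $\rho$; indeed, a direct spherical-trigonometry calculation shows that such a quadrilateral has vertex half-angle $\gamma$ satisfying $\tan\gamma = 1/\cos\rho$, whence its area equals $4\cdot 2\gamma - 2\pi$.

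Two auxiliary lemmas power the argument. The \emph{isoperimetric lemma} states that if $T$ is a spherical convex $k$-gon with $k \geq 3$ contained in a spherical cap of radius $\rho < \frac{\pi}{2}$, then $\Sarea(T) \leq A_k(\rho)$, where $A_k(\rho)$ denotes the area of the regular spherical $k$-gon inscribed in a cap of radius $\rho$. This follows from a two-step symmetrization: at a maximum all vertices must lie on the boundary circle of the covering cap (moving an interior vertex outward along the perpendicular bisector of the chord between its two neighbors strictly increases the area while preserving convexity), and once all vertices lie on that circle, repeatedly equalizing adjacent central angles drives the extremizer to the regular $k$-gon. The \emph{concavity lemma} states that the sequence $(A_k(\rho))_{k \geq 3}$ is concave in $k$; this follows by direct computation from the implicit equation for the vertex half-angle $\gamma_k(\rho)$ arising from the spherical isoceles-triangle decomposition of a regular $k$-gon, and reduces in the Euclidean limit $\rho \to 0^+$ to the manifest concavity of $k \mapsto \tfrac{k}{2}\sin(2\pi/k)$. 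Setting $m_\rho := A_5(\rho) - A_4(\rho) \geq 0$, concavity yields the affine upper bound $A_k(\rho) \leq A_4(\rho) + (k-4)\,m_\rho$ for every integer $k \geq 3$.

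The final ingredient is an Euler-characteristic estimate: if $\mathcal{T}$ has $V$ vertices, $E$ edges, and $F$ faces, then $V - E + F = 2$, and since two great circles cross transversally at each vertex every vertex has degree at least $4$, so $2E = \sum_v \deg(v) \geq 4V$; hence $\sum_T k(T) = 2E \leq 4F - 8$. Moreover, because $\rho < \frac{\pi}{2}$, a cell with $k \leq 2$ sides would be a lune or a hemisphere and could not fit inside a cap of radius $\rho$, so every cell satisfies $k(T) \geq 3$ and the isoperimetric lemma applies. Combining everything,
\begin{align*}
4\pi \;=\; \sum_{T \in \mathcal{T}} \Sarea(T) \;&\leq\; \sum_{T} A_{k(T)}(\rho) \;\leq\; \sum_{T}\bigl[A_4(\rho) + (k(T)-4)\,m_\rho\bigr] \\
&=\; F\,A_4(\rho) + m_\rho\bigl(2E - 4F\bigr) \;\leq\; F\,A_4(\rho),
\end{align*}
so $F \geq 4\pi/A_4(\rho) = 2\pi/\bigl(4\arctan(1/\cos\rho) - \pi\bigr)$, which is the required bound. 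The main technical obstacle is verifying the concavity of $(A_k(\rho))_{k \geq 3}$ on the sphere; I expect to establish this by implicitly differentiating the defining relation for $\gamma_k(\rho)$ in $k$ and checking that the discrete differences $A_{k+1}(\rho) - A_k(\rho)$ are monotone decreasing, with the Euclidean limit providing the guiding estimate.
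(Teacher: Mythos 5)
Your proof is correct and follows the same overall strategy as the paper: compare the area of each cell with the area $A_k(\rho)$ of the regular $k$-gon inscribed in a cap of radius $\rho$, invoke the concavity of $k\mapsto A_k(\rho)$ (the spherical Dowker theorem), reduce to the quadrilateral case, and conclude that the number of cells is at least $4\pi/A_4(\rho)$. The one genuinely different ingredient is how you establish that the side counts average out to at most $4$: the paper proves $\sum_i s_i\leq 4N$ by induction on the number of great circles (its Lemma on the average side number) and then applies Jensen's inequality to the concave extension of $A_R(\cdot)$, whereas you get $\sum_T k(T)=2E\leq 4F-8$ from Euler's formula together with the observation that every vertex of a great-circle arrangement has degree at least $4$, and then replace Jensen by the affine majorant $A_k\leq A_4+(k-4)(A_5-A_4)$ coming from discrete concavity. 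Your route is arguably cleaner: it avoids both the induction and the need to extend $A_R$ to non-integer arguments, and it even yields the slightly stronger strict inequality $2E<4F$. Be aware that the two facts you label as lemmas are exactly the external results the paper cites rather than proves (the inscribed-polygon maximality from Fejes T\'oth--Fodor and the Dowker concavity); your symmetrization sketch for the first and your plan to verify the second by implicit differentiation are plausible but would need to be carried out or replaced by citations, just as the paper does. Your identification of $A_4(\rho)=8\arctan(1/\cos\rho)-2\pi$ and the final bookkeeping $\delta(\mathcal{C})\geq F(1-\cos\rho)/2\geq\Delta(\rho)$ are both correct.
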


\begin{figure}[ht]
\begin{center}
\includegraphics[width=0.55\textwidth]{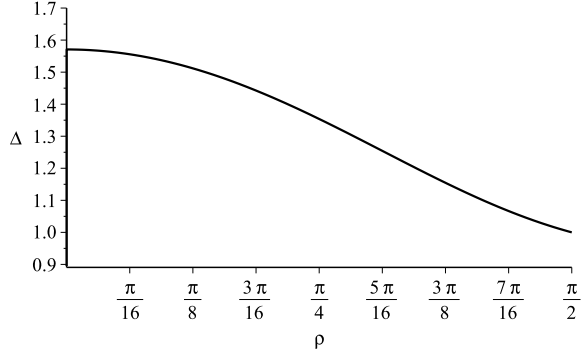}
\caption{The graph of the function $\Delta(\rho)$ defined in Theorem~\ref{thinnest-TS-covering}.}
\label{graph2}
\end{center}
\end{figure}

We note that an elementary computation, using L'Hospital's Rule, yields that $\Delta(\rho)$ is strictly decreasing over $(0, \frac{\pi}{2})$ with $\lim_{\rho \to 0^{+}} \Delta(\rho) = \frac{\pi}{2}\approx 1.57$ and $\lim_{\rho \to \frac{\pi}{2}^{-}} \Delta(\rho) = 1$ (see Figure~\ref{graph2}). We note that according to \cite{FeFe87}, $\frac{\pi}{2}$ is the smallest density of TS-coverings of $\E^{2}$ with congruent disks.

\subsection{On the analogue of Problem~\ref{separable-coverings} in high dimensions}

It would be very interesting to find the corresponding higher dimensional analogues of the above mentioned results. The following notations are natural extensions of the lower dimensional ones. Let $\E^{d}$ denote the $d$-dimensional Euclidean space, with inner product $\langle\cdot ,\cdot\rangle$ and norm $\|\cdot\|$. Its unit sphere centered at the origin $\o$ is $\S^{d-1}:= \{\x\in\E^d\ |\ \|\x\|= 1\}$. A {\it $k$-dimensional great sphere} of $\S^{d-1}$ is an intersection of $\S^{d-1}$ with a $(k+1)$-dimensional linear subspace of $\E^d$, where $0\leq k\leq d-2$.  Two points are called {\it antipodes} if they form a $0$-dimensional great sphere of $\S^{d-1}$. The set $C_{{\mathbb S}^{d-1}}[\x, \alpha]:=\{\y\in{\mathbb S}^{d-1} | \langle\x ,\y\rangle\geq \cos\alpha\}$  is called the (closed) {\it $(d-1)$-dimensional spherical cap} of angular radius $\alpha$ centered at $\x\in{\mathbb S}^{d-1}$ for $0<\alpha\leq \frac{\pi}{2}$. $C_{{\mathbb S}^{d-1}}[\x, \frac{\pi}{2}]$ is called a (closed) {\it hemisphere}.

In this section we raise and investigate the analogue of Problem~\ref{separable-coverings} in $\S^{d-1}$ for $d>3$. 

\begin{Problem}\label{high-dimensional-separable-coverings}
For given $n>1$ find the smallest $R>0$ such that there exists an arrangement of $n$ $(d-2)$-dimensional great spheres in $\S^{d-1}$ ($d>3$) with the property that each $(d-1)$-dimensional cell of the tiling of $\S^{d-1}$ generated by them can be covered by a spherical cap of radius $R$. Let us denote this $R$ by $R_{\rm gs}(n, \S^{d-1})$. 
\end{Problem}

\begin{Definition}
Let $n >1$ be fixed. An arrangement of $n$ $(d-2)$-dimensional great spheres in $\S^{d-1}$ is called \emph{$n$-extremal} if each $(d-1)$-dimensional cell of the tiling of $\S^{d-1}$ generated by them can be covered by a spherical cap of radius $R_{\rm gs}(n, \S^{d-1})$.
\end{Definition}

Clearly, $R_{\rm gs}(n, \S^{d-1})=\frac{\pi}{2}(=90^{\circ})$ for all $1<n<d$ and $d\geq 3$. On the other hand, we have

\begin{Theorem}\label{thm:highdim}\text{}
\begin{itemize}
\item[(i)] $R_{\rm gs}(d, \S^{d-1})=\arccos\frac{1}{\sqrt{d}}$ and a family of $d$ $(d-2)$-dimensional great spheres is $d$-extremal in $\S^{d-1}$, $d\geq 3$ if and only if it consists of mutually orthogonal $(d-2)$-dimensional great spheres.
\item[(ii)] $ \arcsin\left(\frac{d}{2\sum_{i=0}^{d-1}\binom{n-1}{i}}\right) < R_{\rm gs}(n, \S^{d-1}) $ holds for $n> d>3$.
\item[(iii)] $R_{\rm gs}(n, \S^{d-1}) \leq \arcsin \left( \frac{4}{\sqrt[d]{n}} \cdot \sqrt{\frac{2d-2}{d}} \right)$ holds for any $n>d\geq 3$.
\end{itemize}
\end{Theorem}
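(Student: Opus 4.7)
For part (i), the upper bound is realized by the $d$ coordinate hyperplanes $e_i^{\perp}$, which partition $\S^{d-1}$ into $2^d$ congruent spherical orthants; for the positive orthant $\{x\in\S^{d-1}: x_i\ge 0 \text{ for all } i\}$ the point $\mathbf{c}=\tfrac{1}{\sqrt{d}}(1,\ldots,1)$ gives $\langle\mathbf{c},x\rangle=\tfrac{1}{\sqrt{d}}\sum_i x_i\ge\tfrac{1}{\sqrt{d}}$ (since $\sum_i x_i\ge\sqrt{\sum_i x_i^2}=1$), with equality only at the vertices $e_i$, so the orthant's circumradius is exactly $\arccos(1/\sqrt{d})$. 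For the lower bound, let $v_1,\ldots,v_d$ be the (WLOG linearly independent) unit normals of an arbitrary arrangement and $\hat w_1,\ldots,\hat w_d$ the normalized dual basis; the $2^d$ cells $K_\epsilon=\{x:\epsilon_i\langle x,v_i\rangle\ge 0\}$, $\epsilon\in\{\pm 1\}^d$, are spherical simplices with vertices $\epsilon_i\hat w_i$. The key observation: if the circumradius of $K_\epsilon$ is strictly less than $\arccos(1/\sqrt{d})$, then some unit vector $c$ satisfies $\langle c,\epsilon_i\hat w_i\rangle>1/\sqrt{d}$ for every $i$, whence $\|\sum_i\epsilon_i\hat w_i\|\ge\sum_i\langle c,\epsilon_i\hat w_i\rangle>\sqrt{d}$. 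Averaging $\|\sum_i\epsilon_i\hat w_i\|^2=\epsilon^{T}G\epsilon$ (where $G$ is the Gram matrix of the $\hat w_i$, so $\operatorname{tr}(G)=d$) over $\epsilon\in\{\pm 1\}^d$ yields mean $d$, so some $\epsilon^{*}$ has $\|\sum_i\epsilon_i^{*}\hat w_i\|^2\le d$ and the corresponding cell has circumradius at least $\arccos(1/\sqrt{d})$. For uniqueness, if \emph{every} cell has circumradius at most $\arccos(1/\sqrt{d})$ then every $\|\sum_i\epsilon_i\hat w_i\|^2\ge d$; combined with mean $d$, all are equal to $d$, and expanding as $d+2\sum_{i<j}\epsilon_i\epsilon_j G_{ij}$ and varying $\epsilon$ forces $G_{ij}=0$ for all $i\ne j$, i.e., the $v_i$ form an orthonormal system.

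For part (ii), combine the Schl\"afli bound of at most $2N(n,d):=2\sum_{i=0}^{d-1}\binom{n-1}{i}$ cells with the fact that each cell, being spherically convex and contained in a cap of radius $R=R_{\rm gs}(n,\S^{d-1})$, has $(d-2)$-dimensional boundary measure at most that of the cap, namely $\omega_{d-2}\sin^{d-2}R$ (where $\omega_{d-2}=\operatorname{Vol}_{d-2}(\S^{d-2})$). Summing over all cells and equating with the total boundary measure $2n\,\omega_{d-2}$ (each of the $n$ great $(d-2)$-spheres is shared by exactly two adjacent cells) gives $2n\,\omega_{d-2}\le 2N(n,d)\,\omega_{d-2}\sin^{d-2}R$, i.e., $\sin^{d-2}R\ge n/N(n,d)$. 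Since $\sin R\le 1$ and $d\ge 3$ imply $\sin^{d-2}R\le \sin R$, one further obtains $\sin R\ge n/N(n,d)$, which for $n>d>3$ is strictly greater than $d/(2N(n,d))$; this produces the asserted strict lower bound $\sin R>d/(2N(n,d))$.

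For part (iii), I plan an explicit construction: take the $n$ great spheres as the polars $p_i^{\perp}$ of a spherical code $\{p_1,\ldots,p_n\}\subset\S^{d-1}$ chosen so that every spherical lune of angular width exceeding the target diameter $D$ contains some $p_i$ (achievable by a standard spherical packing argument with spacing on the order of $n^{-1/d}$, since the $(d-1)$-measure of a lune of width $w$ in $\S^{d-1}$ is proportional to $w$ and the packing count scales as $n^{1/d}$ in each of $d$ effective directions). Two points lying in the same cell must be separated by no $p_i^{\perp}$, hence lie in a lune missed by every $p_i$, forcing the diameter of each cell to be at most $D\le 8\,n^{-1/d}$. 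Applying the spherical Jung inequality (circumradius at most diameter times $\sqrt{(d-1)/(2d)}$) and tracking constants converts this diameter bound into the circumradius bound $R_{\rm gs}(n,\S^{d-1})\le\arcsin\!\bigl(\tfrac{4}{\sqrt[d]{n}}\sqrt{\tfrac{2d-2}{d}}\bigr)$.

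The main obstacle I foresee is calibrating the explicit constants in (iii): the construction must be tuned so that the packing/covering argument on lunes produces exactly the factor $4\,n^{-1/d}$, and the subsequent spherical Jung step recovers the companion factor $\sqrt{2(d-1)/d}$. A second, milder obstacle is the verification in (ii) that ``spherically convex subset of a cap has $(d-2)$-boundary measure bounded by that of the cap''; while this is the natural spherical analogue of the classical Euclidean monotonicity of surface area for convex bodies (valid within a cap of radius $\le\pi/2$), its careful justification in this generality is the linchpin of the whole argument.
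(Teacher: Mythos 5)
Your plan is correct, and in parts (i) and (ii) it departs genuinely from the paper. For (i) the paper reduces the lower bound and the equality case to B\"or\"oczky's theorem (regular simplices inscribed in a cap maximize volume), which it reproves via Lemma~\ref{lem:density}; your Gram-matrix argument replaces all of that with the single identity $\mathbb{E}_{\epsilon}\left[\epsilon^{T}G\epsilon\right]=\operatorname{tr}(G)=d$ plus Cauchy--Schwarz, and it delivers the equality case for free since the characters $\epsilon\mapsto\epsilon_i\epsilon_j$ are independent. This is cleaner and more elementary; just make the degenerate case explicit (if the normals are dependent, every cell contains a pair of antipodes, so its circumradius is $\frac{\pi}{2}$). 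For (ii) the paper combines the cell-count bound $N\leq 2\sum_{i=0}^{d-1}\binom{n-1}{i}$ with Glazyrin's covering theorem ($d<N\sin R$); you instead use monotonicity of the $(d-2)$-dimensional boundary measure for nested spherically convex bodies, which you correctly identify as the linchpin --- it is true (e.g.\ by the spherical Cauchy--Crofton formula, since a great circle meets a convex body in an open hemisphere in a single arc), but it must be justified or cited, and you should dispose of the case $R\geq\frac{\pi}{2}$ separately since the monotonicity needs the cells to lie in open hemispheres. Note that your route actually proves the stronger estimate $\sin^{d-2}R\geq n/\sum_{i=0}^{d-1}\binom{n-1}{i}$, i.e.\ $\sin R\gtrsim c_d/n$, which beats the paper's $\sin R\gtrsim c_d/n^{d-1}$ by a wide margin while still implying the stated inequality strictly (as $2n>d$). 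Part (iii) is essentially the paper's proof: polars of a $\delta$-net of size $\leq(4/\sin\delta)^d$ (Ball), a diameter bound of $2\delta$ for each cell (your lune formulation is an equivalent repackaging of the paper's polarity argument, using that a lune of width $w$ contains a cap of radius $\frac{w}{2}$), and the spherical Jung theorem with $\sin\frac{D}{2}\leq\sin\delta=4n^{-1/d}$; the constants close exactly as you anticipate.
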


The proof of Part (i) of Theorem~\ref{thm:highdim} is based on B\"or\"oczky's theorem \cite{boroczky} stating that among simplices of $\S^{d-1}$ which are contained in a spherical cap $C$ of angular radius $<\frac{\pi}{2}$, the regular simplices inscribed in $C$ have maximal volume. We give a new and short proof of this theorem (see Lemma~\ref{lem:density} and Corollary~\ref{Boroczky}).

In the rest of our paper we prove the theorems stated.

\section{Proof of Theorem~\ref{thm:few_caps}}\label{sec:few_caps}

Let $T$ be a spherical triangle on $\S^2$ with vertices $\a, \b, \c$. Note that the three side lines (great circles containing a side of $T$) dissect $\S^2$ into eight spherical triangles. We call the three triangles of these that share a side with $T$ \emph{adjacent} triangles. The union of $T$ and one of the three adjacent triangles is a lune, which we call a \emph{lune generated by $T$}. Like for Euclidean triangles, there is a unique spherical cap $S$ inscribed in $T$, i.e., touching each side of $T$, whose radius we call the {\it inscribed radius} of $T$. Nevertheless, unlike in the Euclidean plane, the radius of this cap may not be maximal among all spherical caps contained in $T$. Indeed, while it is easy to see that a largest spherical cap in $T$ touches at least two sides of $T$, it may happen that the largest spherical cap inscribed in a lune generated by $T$ is contained in $T$. Thus, the radius of the largest cap in $T$ called the {\it inradius} of $T$ is either the inscribed radius of $T$, or the inradius of a lune generated by $T$ (i.e., the radius of the largest cap contained in that lune), and the latter happens exactly when the largest cap inscribed in one of these lunes is contained in $T$. We denote the inradius of $T$ by $\rho(T)$, and the inscribed radius of $T$ by $\rho^i(T)$. Clearly, $\rho^i(T)\leq \rho(T)$. The following statement characterizes the case of equality.

\begin{Lemma}\label{lem:inradius}
Let $T$ be a spherical triangle on $\S^2$ with vertices $\a, \b, \c$. Let the angle of $T$ at $\a$, $\b$, $\c$ be denoted by $\alpha, \beta$ and $\gamma$, respectively, and let the length of the side of $T$ opposite of $\a$, $\b$ and $\c$ be denoted by $a$, $b$, $c$, respectively. Let $T_a, T_b$ and $T_c$ denote the adjacent triangle not containing $\a, \b$ and $\c$, respectively. Assume that $a \leq b \leq c$. Then we have that
\begin{itemize}
\item[(i)] $\rho(T) \leq \frac{1}{2} \alpha = \frac{1}{2} \min \{ \alpha, \beta, \gamma \}$; 
\item[(ii)] there is at most one triangle adjacent to $T$ whose perimeter is less than $\pi$, and if there is, then it is $T_a$;
\item[(iii)] if the perimeter of $T_a$ is less than $\pi$, then $\rho(T) = \frac{\alpha}{2}$, and otherwise $\rho(T) = \rho^i(T)$.
\end{itemize}
\end{Lemma}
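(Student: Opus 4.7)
The plan is to use as the main tool the three ``lunes generated by $T$'', namely $L_a := T \cup T_a$, $L_b := T \cup T_b$, $L_c := T \cup T_c$: the two bounding great-semicircles of $L_a$ meet at the antipodes $\pm\a$ at dihedral angle $\alpha$, and analogously for $L_b, L_c$.  Using the standard fact that in a spherical triangle a longer side is opposite a larger angle, the assumption $a\le b\le c$ gives $\alpha\le\beta\le\gamma$, so $\alpha=\min\{\alpha,\beta,\gamma\}$.

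For (i), a one-variable computation along the bisecting semicircle of a lune of angle $\theta$ shows that its inradius equals $\theta/2$, realised at the midpoint of that bisector (the point at spherical distance $\pi/2$ from either vertex of the lune).  Applied to $L_a\supset T$, this gives $\rho(T)\le\rho(L_a)=\alpha/2$, which is (i).  For (ii), $T_a$ has vertices $\b,\c,-\a$ with side-lengths $a,\pi-c,\pi-b$, so $\perim(T_a)=2\pi+a-b-c$, and cyclically $\perim(T_b)=2\pi+b-a-c$, $\perim(T_c)=2\pi+c-a-b$; the condition $\perim(T_b)<\pi$ rewrites as $c>\pi+b-a\ge\pi$, impossible for a side of a spherical triangle, and the same argument kills $\perim(T_c)<\pi$.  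So only $T_a$ can have perimeter below $\pi$.

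For (iii), I would argue that a largest cap in $T$ must be tangent to at least two sides of $T$, and if it is tangent to all three it is the inscribed cap of $T$ with radius $\rho^i(T)$.  If instead it is tangent only to the two sides meeting at $\a$, its centre lies on the angle-bisector through $\a$, and the radius of such a tangent cap at bisector-parameter $\theta$ is $r(\theta)=\arcsin(\sin\theta\sin(\alpha/2))$; this function is unimodal on $(0,\pi)$ with peak $\alpha/2$ at $\theta=\pi/2$, so the optimum sits at the midpoint $\m_a$ of the midline of $L_a$, with radius exactly $\alpha/2=\rho(L_a)$, and analogously at $\b,\c$.  Combined with (i), the only lune-inscribed cap capable of realising $\rho(T)$ is the $L_a$-one; hence $\rho(T)=\alpha/2$ if this cap lies in $T$, and $\rho(T)=\rho^i(T)$ otherwise.

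Everything thus boils down to characterising when the cap of radius $\alpha/2$ centred at $\m_a$ is contained in $T$.  Since it is automatically tangent to the two sides of $T$ at $\a$, the only possible obstruction is the side $\widehat{\b\c}$.  Placing $\a$ at the north pole with $L_a$ symmetric about the meridian through $\m_a=(1,0,0)$, a short cross-product computation of $\b\times\c$ shows that $\m_a$ lies on the same side of the great circle through $\b,\c$ as $\a$ exactly when $b+c>\pi$, and that its spherical distance to that great circle equals $\arcsin\bigl(\sin(\alpha/2)\,|\sin(b+c)|/\sin a\bigr)$; requiring this distance to be at least $\alpha/2$ simplifies, via $|\sin(b+c)|=\sin(b+c-\pi)$ for $b+c\in(\pi,2\pi)$, to $b+c\ge\pi+a$, equivalently $\perim(T_a)\le\pi$.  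So if $\perim(T_a)<\pi$ the $L_a$-inscribed cap lies strictly in $T$ and $\rho(T)=\alpha/2$, while otherwise it fails to lie in $T$ and $\rho(T)=\rho^i(T)$.  I expect the main technical obstacle to be this distance-to-$\widehat{\b\c}$ computation together with the sign bookkeeping of which side of the great circle through $\b,\c$ the point $\m_a$ actually sits on.
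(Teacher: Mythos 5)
Your parts (i) and (ii) follow the paper's own route: (i) is the observation that the inradius of a lune equals half its angle together with side--angle monotonicity, and (ii) is the same perimeter computation for the adjacent triangles (your version, showing directly that $\perim(T_b)$ and $\perim(T_c)$ can never drop below $\pi$ when $a\leq b\leq c$, is if anything slightly cleaner than the paper's pairwise-summation argument). For part (iii), however, you take a genuinely different route. The paper argues synthetically: it shows that the largest cap inscribed in the lune $T\cup T_a$ is the inscribed cap of a triangle $\a\b'\c'$ whose complementary triangle $T'$ in the lune has perimeter exactly $\pi$, and then deduces the dichotomy from the strict monotonicity of perimeter under inclusion of spherically convex bodies (after first disposing of the case where $T_a$ contains a midpoint of a bounding half great circle). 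You instead compute, in coordinates, the signed distance from the lune's incenter $\m_a$ to the great circle through $\b,\c$, obtaining $\arcsin\bigl(\sin(\alpha/2)\lvert\sin(b+c)\rvert/\sin a\bigr)$ with the correct side determined by the sign of $\sin(b+c)$; this is correct (the normal $\b\times\c$ has $\m_a$-component $-\sin(\alpha/2)\sin(b+c)$ and norm $\sin a$), and the containment criterion does reduce to $\perim(T_a)\leq\pi$. Your computation buys an explicit, checkable inequality and avoids the paper's case analysis about midpoints; the paper's argument buys coordinate-freeness and reuses the perimeter bookkeeping already set up for (ii). Two small points you should make explicit when writing this up: first, reducing $\sin(b+c-\pi)\geq\sin a$ to $b+c\geq\pi+a$ requires discarding the other branch $a\geq 2\pi-b-c$ of the sine comparison, which is where the bound $a+b+c<2\pi$ on the perimeter of a spherical triangle enters; second, your assertion that only the $L_a$-incap can realize $\rho(T)$ deserves a sentence --- under $a\leq b\leq c$ one has $\perim(T_b),\perim(T_c)>\pi$ strictly (since $a+c-b\leq c<\pi$), so by the same containment criterion the incaps of the other two lunes never lie in $T$, which closes the dichotomy between $\rho(T)=\alpha/2$ and $\rho(T)=\rho^i(T)$.
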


\begin{proof}
Note that in a spherical triangle the smallest angle is opposite of the shortest side. Furthermore, the inradii of the three lunes generated by $T$ are exactly the half angles of $T$. This yields Part (i).

Observe that the sides of $T_a$ are of length $a$, $\pi - b$, $\pi-c$. Thus, the fact that its perimeter is less than $\pi$ is equivalent to the inequality $\pi < b+c-a$. We obtain similar inequalities for $T_b$ and $T_c$. Assume that, say $\pi < b+c-a$ and $\pi < a+c-b$. Then, summing up, we obtain $2 \pi < 2 c$, which is clearly impossible. On the other hand, by our conditions, we have $ a+b-c \leq a+c-b \leq b+c-a$, and thus, if there is an adjacent triangle with perimeter less than $\pi$, it is $T_a$, implying Part (ii).

Now, we prove Part (iii). First, observe that if $T_a$ contains at least one of the midpoints of the two half great circles bounding the lune $T_a \cup T$, then, by the triangle inequality, the perimeter of $T_a$ is clearly at least $\pi$, and $\rho(T) < \frac{1}{2} \alpha$ implies $\rho(T) = \rho^i(T)$. Thus, we may restrict ourselves to the case that $T_a$ does not contain the midpoints of the half great circles bounding $T_a \cup T$.

Let $C$ denote the largest spherical cap inscribed in the lune $T_a \cup T$. Let $\b'$, $\c'$ be points on the two half great circles in the boundary of this lune, respectively, such that $C$ is inscribed in the spherical triangle with vertices $\a$, $\b'$, $\c'$. Let $T'$ be the spherical triangle with vertices  $-\a$, $\b'$, $\c'$ and let the lengths of the sides of $T'$, opposite to $-\a$, $\b'$, $\c'$ be $a', b', c'$, respectively. We show that $a'+b'+c'=\pi$. Since for any two spherically convex bodies $K \subset L \subset \S^2$ the perimeter of $K$ is strictly less than that of $L$, the above statement implies Part (iii).
Let the tangent point of $C$ on the side of $T'$ opposite to $-\a$ be $\p_a$, and let the tangent points of $C$ on the side lines of $T'$ opposite to the vertices $\b'$ and $\c'$ be $\p_b$ and $\p_c$, respectively (cf. Figure~\ref{fig:thm1proof1}). Note that by the symmetry of $C$, we have $l(\widehat{\p_a \b'} ) = l(\widehat{\p_c \b'} )$ and $l(\widehat{\p_a \c'} ) = l(\widehat{\p_b \c'} )$, implying $a' = l(\widehat{\b' \c'} ) = l(\widehat{\p_a \b'} ) + l(\widehat{\p_a \c'} ) = l(\widehat{\p_c \b'} )+l(\widehat{\p_b \c'})$. On the other hand, $l(\widehat{\p_c \b'} ) = \frac{\pi}{2} - c'$ and  $l(\widehat{\p_b \c'} ) = \frac{\pi}{2} - b'$, which, combined with the previous equalities, yields $a'+b'+c' = \pi$.
\end{proof}

\begin{figure}[h]
\begin{minipage}[t]{0.45\linewidth}
\centering
\includegraphics[width=0.35\textwidth]{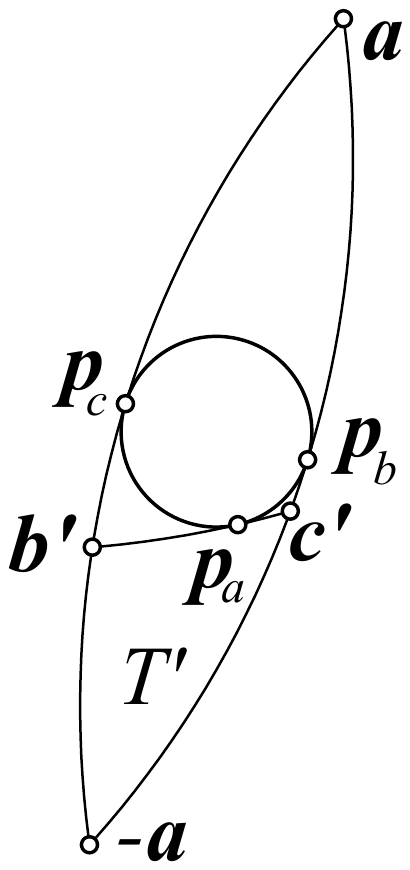}
\caption{An illustration for the proof of Lemma~\ref{lem:inradius}.}
\label{fig:thm1proof1}
\end{minipage}
\hglue0.08\textwidth
\begin{minipage}[t]{0.45\linewidth}
\centering
\includegraphics[width=0.85\textwidth]{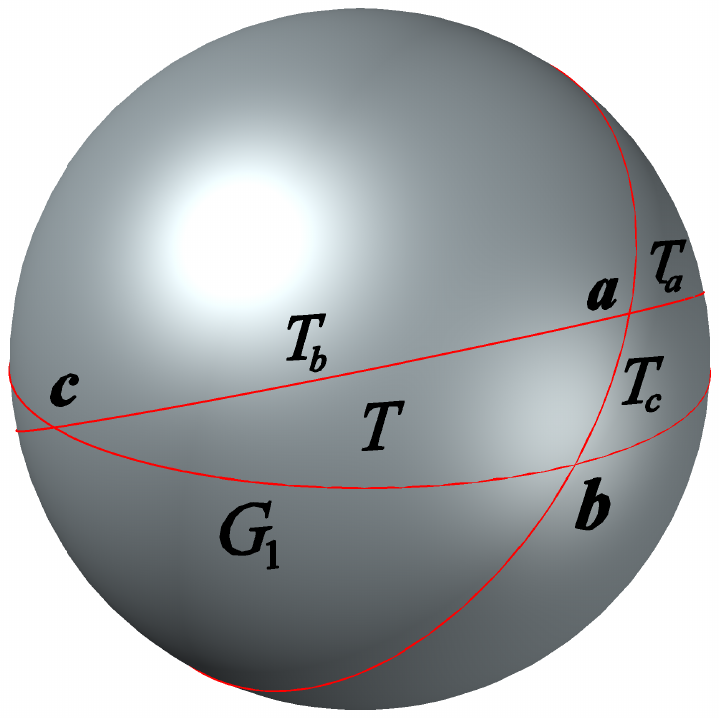}
\caption{The notation in the proof of Theorem~\ref{thm:few_caps} for $k=5$.}
\label{fig:thm1proof2}
\end{minipage}
\end{figure}

\begin{Remark}\label{rem:equality}
We note that the property that the perimeter of $T_a$ in Lemma~\ref{lem:inradius} is $\pi$ is equivalent to the fact that $\rho(T) = \rho^i(T) = \frac{\alpha}{2}$.
\end{Remark}

\begin{Remark}\label{rem:lune}
Let $T$ and $T'$ be spherical triangles such that $T'$ is adjacent to $T$, and it has the shortest perimeter among all triangles adjacent to $T$. Then
$\min \{ \rho(T), \rho(T') \} = \min \{ \rho^i(T), \rho^i(T') \}$.
\end{Remark}

%\begin{Remark}\label{remduallemma}
%Let $T$ be a spherical triangle with circumcenter $\c$. Applying Lemma~\ref{lem:inradius} for the polar of $T$ we obtain the equivalence of the following properties:
%\begin{itemize}
%\item[(a)] $\c \in T$;
%\item[(b)] $T$ has no adjacent triangle with area greater than $\pi$;
%\item[(c)] The three angles of $T$ satisfy the triangle equalities; i.e. any angle of $T$ is not greater than the sum of the other two angles of $T$.
%\end{itemize}
%\end{Remark}

Next, we turn to the proof of Theorem~\ref{thm:few_caps}. Clearly, it is sufficient to prove the first statement for the case $k=5$.
Let $\F = \{ C_1, C_2, C_3, C_4, C_5 \}$ be a TS-packing of spherical caps of radius $\rho$. Let $G_1$ be  a great circle separating two elements. Then one of the two closed hemispheres bounded by $G_1$ contains at least three elements of $\F$. Thus, without loss of generality, we may assume that $C_1$, $C_2$ and $C_3$ are contained in the same closed hemisphere. Let $G_2$ be a great circle separating $C_1$ and $C_2$. Then $G_2$ does not separate $C_3$ from $C_1$ or $C_2$, say $C_2$. Let $G_3$ denote a great circle separating $C_2$ and $C_3$. Then we obtain a closed hemisphere dissected into four spherical triangles by two half great circles such that three spherical triangles contain one element of $\F$, and the fourth one is empty. Let the vertices of the empty triangle $T$ be $\a$, $\b$ and $\c$ such that $\b, \c$ lie on $G_1$. Let the lengths of the sides of $T$ opposite to $\a$, $\b$ and $\c$ be $a, b, c$, respectively, and let the angles at these vertices be $\alpha, \beta, \gamma$, respectively. We denote the spherical triangle adjacent to $T$ and not containing $\b$ and $\c$ by $T_b$ and $T_c$, respectively. We note that the remaining fourth spherical triangle in the hemisphere, which we denote by $T_a$, is the reflected copy of the third triangle adjacent to $T$ (cf. Figure~\ref{fig:thm1proof2}).
%We relabel the three elements of $\F$ in this hemisphere such that $C_a \subset T_a$, $C_b \subset T_b$ and $C_c \subset T_c$.
To prove Theorem~\ref{thm:few_caps}, it is sufficient to show that
$\min \{ \rho(T_a), \rho(T_b), \rho(T_c) \} \leq \arctan \frac{3}{4}$, with equality only if $T$ is a regular triangle of side length $\arccos \frac{1}{4}$. We distinguish the following two cases.

\emph{Case 1}: $a+b+c \leq \pi$.\\
In this case, by Lemma~\ref{lem:inradius}, $\rho(T_a) = \frac{\alpha}{2}$, $\rho(T_b) = \frac{\beta}{2}$ and $\rho(T_c) = \frac{\gamma}{2}$.
Recall that the area of $T$ is $\alpha+\beta + \gamma - \pi$. Thus, by the Discrete Isoperimetric Inequality (see e.g. \cite{CsLN})
\[
\min \{ \rho(T_a), \rho(T_b), \rho(T_c) \} \leq \frac{1}{3} \left( \rho(T_a) + \rho(T_b) + \rho(T_c) \right) = \frac{\area(T)+\pi}{6}
\]
is maximal if and only if $T$ is a regular triangle of side length $\frac{\pi}{3}$. Thus, it follows that $\min \{ \rho(T_a), \rho(T_b), \rho(T_c) \} \leq \arcsin\frac{1}{\sqrt{3}}< \arctan \frac{3}{4}$.

\emph{Case 2}, $a+b+c > \pi$.\\
First, we show that in this case $\min \{ \rho(T_a), \rho(T_b), \rho(T_c) \} = \min \{ \rho^i(T_a), \rho^i(T_b), \rho^i(T_c) \}$.
This clearly holds if in each triangle the largest cap is the inscribed cap. Assume, for example, that $\rho(T_a) > \rho^i(T_a)$. Then, by the condition of Case 2, $\rho(T_a)$ is the inradius of the lune $T_a \cup T_b$ or the lune $T_a \cup T_c$. Thus, the statement follows from Remark~\ref{rem:lune}. If $\rho(T_b) > \rho^i(T_b)$ or $\rho(T_c) > \rho^i(T_c)$, we may apply a similar argument.

We recall the following formula (see e.g. \cite{Handbook}) for the radius $\rho$ of the inscribed circle of a spherical triangle with side lengths $A,B,C$:
\begin{equation}\label{eq:radius}
\tan \rho = \sqrt{\frac{\sin(P-A) \sin(P-B) \sin(P-C)}{\sin P}}, 
\end{equation}
where $P=\frac{A+B+C}{2}$. Note that for any triangle we have $0 < \rho < \frac{\pi}{2}$, and on this interval $x \mapsto \tan x$ is a positive, strictly increasing function. Thus, it is sufficient to find the maximum of
\begin{equation}\label{eq:inscribed}
F(a,b,c) = \sqrt[3]{ \tan^2 (\rho^i(T_a)) \tan^2 (\rho^i(T_b)) \tan^2 (\rho^i(T_c)) }
\end{equation}
under the condition $\pi < a+b+c < 2\pi$, $0 < a,b,c < \pi$.

Note that the side lengths of $T_a$ are $a, \pi-b, \pi-c$. Thus, applying the formula in (\ref{eq:radius}), we have that
\[
\tan \rho^i(T_a) = \sqrt{\frac{\sin p \sin(p-b) \sin(p-c)}{\sin (p-a)}},
\]
where $p=\frac{a+b+c}{2}$. Using similar formulas for $\rho^i(T_b)$ and $\rho^i(T_c)$ and substituting into (\ref{eq:inscribed}), we obtain that
\[
F(a,b,c) = \sqrt[3]{ \sin^3 p \sin (p-a) \sin(p-b)  \sin(p-c) }.
\]
An elementary computation shows that the function $x \mapsto \ln \sin x$ is strictly concave on the interval $(0,\pi)$. This yields that
\[
F(a,b,c) \leq F\left( \frac{2p}{3},\frac{2p}{3},\frac{2p}{3} \right) = \sin p \sin \frac{p}{3},
\]
with equality if and only if $a=b=c= \frac{2p}{3}$ for any fixed value of $p$. 
Finally, by elementary calculus methods, one can show that in the interval $\frac{\pi}{2} < p < \pi$, the expression $\sin p \sin \frac{p}{3}$ is maximal if and only if $\frac{2p}{3} = \arccos \frac{1}{4}$, which yields the assertion for $k=5,6$.

\begin{figure}[h]
\begin{minipage}[t]{0.44\linewidth}
\centering
\includegraphics[width=0.9\textwidth]{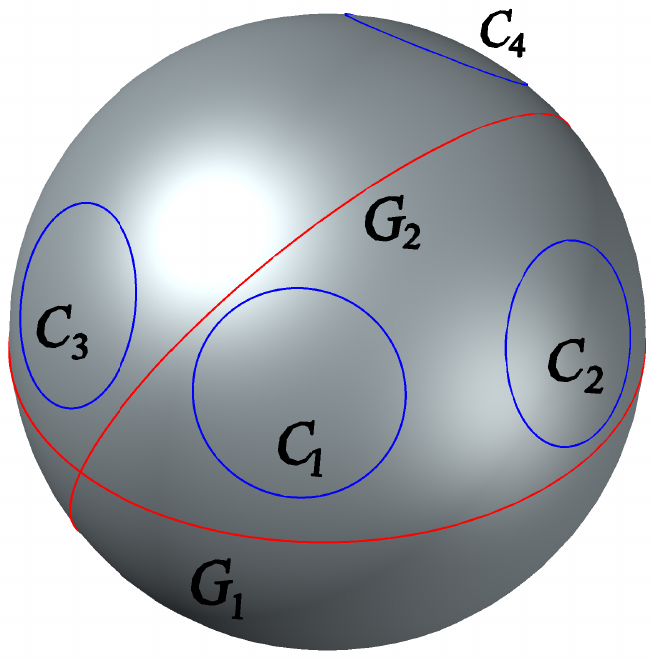}
\caption{An illustration for Case 1 in the proof of Theorem~\ref{thm:few_caps} for $k=7$.}
\label{fig:thm1proof3}
\end{minipage}
\hglue0.1\textwidth
\begin{minipage}[t]{0.44\linewidth}
\centering
\includegraphics[width=0.9\textwidth]{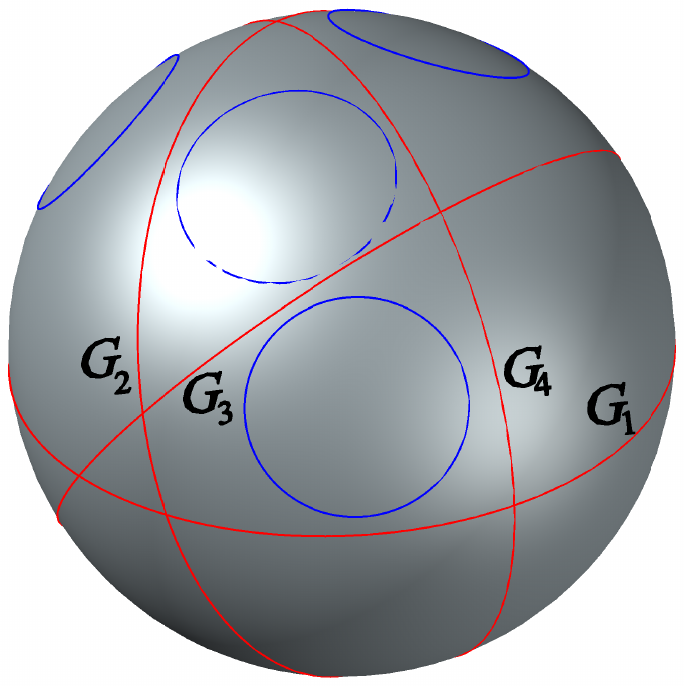}
\caption{An illustration for Case 2 in the proof of Theorem~\ref{thm:few_caps} for $k=7$.}
\label{fig:thm1proof4}
\end{minipage}
\end{figure}

Now, we prove Theorem~\ref{thm:few_caps} for $k=7,8$. Again, it is sufficient to prove it for $k=7$. Let $\F = \{ C_1, C_2, C_3, C_4, C_5, C_6, C_7 \}$ be a TS-packing of spherical caps of radius $\rho$. Let $G_1$ be  a great circle separating two elements. Then one of the two closed hemispheres bounded by $G_1$ contains at least four elements of $\F$. Thus, without loss of generality, we may assume that $C_1$, $C_2$, $C_3$ and $C_4$ are contained in the same closed hemisphere $H$ bounded by $G_1$. We distinguish two cases.

\emph{Case 1:} there is a great circle $G_2$ that separates exactly one pair of the four caps in $H$ from the other pair (cf. Figure~\ref{fig:thm1proof3}). Then $H$ is decomposed into two lunes each of which contains exactly two of the caps. In this case we can decompose each lune into two spherical triangles (or two lunes) such that each triangle (or lune) contains one of the caps. Note that then the area of at least one of the four triangles or lunes is at most $\frac{\pi}{2}$. We recall the spherical version of a theorem of Dowker, stating that among spherical polygons of given number of sides and circumscribed about a given spherical cap the regular one has minimal area (see \cite{FTF, Mo}); this yields that $\rho \leq \arcsin \frac{1}{\sqrt{3}}$ with equality if and only if the four triangles are four octants of $\S^2$, implying the second statement of Theorem~\ref{thm:few_caps} in this case.

\emph{Case 2}: any separating great circle separates one cap from the other three. Then one can check that the following holds:
\begin{enumerate}
\item[(i)] there are three half great circles in $H$, generated by three great circles $G_2$, $G_3$, $G_4$, such that they decompose $H$ into four triangles and three quadrangles such that exactly one triangle (called central triangle) is disjoint from the boundary $G_1$ of $H$,
\item[(ii)] the four spherical caps are contained in the central triangle and the three quadrangles (cf. Figure~\ref{fig:thm1proof4}).
\end{enumerate}
In this case we observe that $C_1, \ldots, C_4$, together with $-C_1, \ldots, -C_4$, form a TS-packing of eight spherical caps of radius $\rho$. On the other hand, the great circles $G_2$ and $G_3$ decompose $\S^2$ into four lunes such that each lune contains exactly two of the caps $C_1, C_2, \ldots, -C_4$. Thus, the argument in Case 1 can be applied to prove the second statement of Theorem~\ref{thm:few_caps} in this case.

\section{Spherical Moln\'ar decomposition}

This section introduces a new decomposition technique in $\S^2$, whose Euclidean analogue has been discovered by Moln\'ar \cite{Molnar}. It will be needed for the proof of Theorem~\ref{main2}.

Consider a finite point system $X = \{ \p_1, \ldots, \p_k \}$ on the sphere $\S^2$ with the property that any open hemisphere contains at least one point.
Then $\conv (X)$ contains the origin $\o$ in its interior, and hence, projecting from $\o$ the faces of the convex polyhedron $\conv (X)$ onto $\S^2$ yields a spherical mosaic with the points of $X$ as vertices. Note that every cell of this mosaic has a circumscribed circle passing through each of its vertices, which coincides with the circumcircle of the corresponding face of $\conv (X)$. Furthermore, the plane containing this face of $\conv (X)$ separates the spherical cap bounded by the circumcircle from $\conv (X)$, implying that the induced spherical mosaic has the property that the intersection of the circumdisk of any cell with $X$ is the vertex set of the cell. This spherical mosaic is called the \emph{Delaunay decomposition}, or \emph{$D$-decomposition} of $X$. Now, by means of Lemma~\ref{lem:sphericalMolnar} we define another decomposition, which we call \emph{spherical Moln\'ar decomposition} in short, $M$-decomposition.

Let $F$ be a cell of the $D$-decomposition, and let us denote the circumdisk of $F$ by $C_F$, and the center of $C_F$ by $\o_F$. Note that by our construction, the radius of $C_F$ is strictly less than $\frac{\pi}{2}$.
If $F$ does not contain $\o_F$, then there is a unique side of $F$ that separates it from $F$ in $C_F$. We call this side a \emph{separating side} of $F$. If $\widehat{\p_i\p_j}$ is a separating side of $F$, then we call the polygonal curve $\widehat{\p_i\o_F} \cup \widehat{\o_F \p_j}$ the \emph{bridge} associated to the separating side $\widehat{\p_i\p_j}$ of $F$ (cf. Figure~\ref{fig:molnar}).

\begin{figure}[ht]
\begin{center}
 \includegraphics[width=0.2\textwidth]{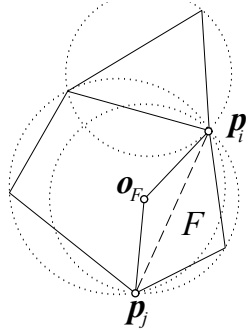}
 \caption{An illustration for the M-decomposition of a point system on $\S^2$. In the figure, great circle arcs are represented with straight line segments; the edges of the cells, the circumcircles of the cells and the separating sides are denoted by solid, dotted and dashed lines, respectively.}
\label{fig:molnar}
\end{center}
\end{figure}

Our main lemma is the following.

\begin{Lemma}\label{lem:sphericalMolnar}
If we replace all separating sides of a $D$-decomposition by the corresponding bridges, we obtain a decomposition.
\end{Lemma}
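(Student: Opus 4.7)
The plan is to verify that the graph $\mathcal{G}$ obtained from the $D$-decomposition by deleting each separating side and inserting the corresponding bridge is an embedded planar graph on $\S^2$ with no edge crossings; the complementary regions are then the cells of the $M$-decomposition.

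First, I will characterize separating sides intrinsically. For a $D$-cell $F$ with vertices $\p_i,\p_j,\p_k$, circumcenter $\o_F$, and circumradius $r_F<\pi/2$, let $m$ denote the midpoint of $\widehat{\p_i\p_j}$ and set $c=l(\widehat{\p_i\p_j})$. Using that $\o_F$ lies on the perpendicular-bisector great circle of $\widehat{\p_i\p_j}$, together with the right spherical triangle relation $\cos r_F=\cos(c/2)\cos l(\widehat{m\o_F})$ in $\triangle m\p_i\o_F$, I will show that $\o_F$ lies on the opposite side of the great circle through $\widehat{\p_i\p_j}$ from $\p_k$ precisely when $l(\widehat{m\p_k})<c/2$, i.e.\ when $\p_k$ lies in the open spherical disk $D$ of radius $c/2$ centered at $m$. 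Applied to two $D$-cells $F$ and $F'$ with vertex sets $\{\p_i,\p_j,\p_k\}$ and $\{\p_i,\p_j,\p_l\}$ sharing the edge $\widehat{\p_i\p_j}$, this criterion together with a short spherical computation shows that $\p_k,\p_l\in\operatorname{int} D$ would imply either $\p_l\in\operatorname{int} C_F$ or $\p_k\in\operatorname{int} C_{F'}$, contradicting the empty-circumdisk property of the Delaunay decomposition. Hence each $D$-edge is a separating side of at most one of its two adjacent cells, and so each separating side is replaced by a single bridge.

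Next I observe that each bridge $\widehat{\p_i\o_F}\cup\widehat{\o_F\p_j}$ consists of two spherical radii of $C_F$, hence lies in the closed cap $C_F$; consequently the triangle $\Delta_F$ enclosed by the bridge and the removed side is contained in the half-cap of $C_F$ on the opposite side of $\widehat{\p_i\p_j}$ from $F$. The central step is to show that no edge of $\mathcal{G}$ crosses a bridge. A surviving $D$-edge $\widehat{\p_a\p_b}$ crossing a bridge of $F$ would have to enter the interior of $C_F$; since $\operatorname{int} C_F\cap X=\emptyset$ by Delaunay, each of $\p_a,\p_b$ is either among $\{\p_i,\p_j,\p_k\}$ or lies outside $C_F$. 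A short case analysis then shows that edges of $F$ meeting $\Delta_F$ are either the removed side $\widehat{\p_i\p_j}$ or touch $\Delta_F$ only at $\p_i$ or $\p_j$, whereas any other $D$-edge from $\{\p_i,\p_j,\p_k\}$ that enters $\Delta_F$, as well as any $D$-edge with both endpoints outside $C_F$ whose arc passes through $\Delta_F$, must in fact be a separating side of some cell neighbouring $F$ (hence already removed from $\mathcal{G}$). The same circle of ideas rules out bridge--bridge crossings: two bridges meet in their interiors only when the corresponding triangles $\Delta_F$ and $\Delta_{F'}$ overlap, but this again forces some shared $D$-edge to be separating for both of its adjacent cells, contradicting the second paragraph.

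Combining these pieces, $\mathcal{G}$ is an embedded planar graph on $\S^2$, and each of its face components is an open topological disk, so $\mathcal{G}$ induces a decomposition of $\S^2$, namely the $M$-decomposition. I expect the main obstacle to be the case analysis excluding crossings between a bridge and a surviving $D$-edge, in particular the subcase in which a $D$-edge's great circle arc dips into $\operatorname{int} C_F$ while both of its endpoints lie outside $C_F$; handling this subcase cleanly appears to require combining the emptiness of $\operatorname{int} C_F$, the spherical convexity of $C_F$, and the Delaunay property of the circumdisks of $F$'s neighbouring cells.
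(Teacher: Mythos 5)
Your overall strategy---show that after replacing separating sides by bridges no two arcs of the new edge set cross---is structurally the same as the paper's (which proves exactly the two claims ``bridges meet only at endpoints'' and ``a bridge meets a surviving side only at endpoints''), but the decisive step is left unproved, and the one-step claim you substitute for it is too weak. The problem is precisely the case you flag at the end: an arc of $\mathcal{G}$ meeting the triangle $\Delta_F$ bounded by the bridge of $F$ and the removed side $\widehat{\p_i\p_j}$. You assert such an edge ``must be a separating side of some cell neighbouring $F$,'' but $\Delta_F$ can extend beyond the cell $F'$ adjacent to $F$ across $\widehat{\p_i\p_j}$ and pass through several further cells, so the offending edge need not bound a neighbour of $F$ at all. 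The paper closes this with an iteration your plan lacks: if $F'$ does not contain $\Delta_F$, then the side of $F'$ that $\Delta_F$ pokes through is itself a separating side of $F'$ (hence removed); moreover $\o_F$ lies on the arc $\widehat{\o_{F'}\m}$, where $\m$ is the midpoint of $\widehat{\p_i\p_j}$, so the radius of $C_{F'}$ is strictly larger than that of $C_F$, and one repeats the argument with $F'$ in place of $F$, terminating because only finitely many cells meet $\Delta_F$. Without some such chain argument the subcase of an arc with both endpoints outside $C_F$ dipping into $\Delta_F$ is genuinely open in your write-up, not merely ``an obstacle.''

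The bridge--bridge case is also not reducible to the configuration you describe: two bridges could a priori cross even when their cells share no Delaunay edge, so deducing a contradiction from ``a shared $D$-edge separating for both of its cells'' does not cover all configurations. The paper handles this by a different, self-contained observation: the vertices of a cell $F$ are exactly the points of $X$ nearest to $\o_F$, hence the perpendicular bisector of $\widehat{\p_i\p_j}$ separates the bridge components $\widehat{\o_F\p_i}$ and $\widehat{\o_{F'}\p_j}$ whenever $\p_i\neq\p_j$ and $\o_F\neq\o_{F'}$. Your preliminary step (a Delaunay edge is a separating side of at most one of its two cells, so each separating side yields a single well-defined bridge) is correct and a reasonable addition, but the core of the lemma remains unproved as the proposal stands.
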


\begin{proof}
As in \cite{Molnar}, the proof is based on showing the following two statements.
\begin{itemize}
\item[(a)] Bridges may intersect only at their endpoints.
\item[(b)] A bridge may intersect a side in the $M$-decomposition only at its endpoints.
\end{itemize}

To show (a), recall that for any cell $F$, the points of $X$ closest to $\o_F$ are exactly the vertices of $F$. Thus, if $\widehat{\o_F\p_i}$ and $\widehat{\o_{F'}\p_j}$ are components of bridges with $\p_i \neq \p_j$ and $\o_F \neq \o_{F'}$, then the bisector of the spherical segment $\widehat{\p_i\p_j}$ separates $\widehat{\o_F\p_i}$ and $\widehat{\o_{F'}\p_j}$, which, since $\o_F \neq \o_{F'}$, yields that the components are disjoint.

Now, we show (b), and let $\widehat{\p_i\p_j}$ be a separating side of the cell $F$. First, observe that by the definition of separating side, the spherical triangle $T$ with vertices $\p_i$, $\p_j$ and $\o_F$ intersects finitely many cells of the $D$-decomposition at a point different from $\widehat{\p_i\p_j}$, each of which is different from $F$. Let $F'$ be the cell adjacent to $F$ and having $\widehat{\p_i\p_j}$ as a side.
Note that both $\o_F$ and $\o_{F'}$ lie on the bisector of $\widehat{\p_i\p_j}$ such that if $\m$ is the midpoint of this arc, then $\o_F \in \widehat{\o_{F'}\m}$, implying that the radius of $C_{F'}$ is strictly greater than that of $C_F$. If, apart from $\widehat{\p_i\p_j}$, $F'$ contains $T$ in its interior, then (b) clearly holds for the bridge associated to $\widehat{\p_i\p_j}$. On the other hand, if $F'$ does not contain $T$ in its interior apart from $\widehat{\p_i\p_j}$, then there is a separating side $\widehat{\p_{i'}\p_{j'}}$ of $F'$, removed during the construction, and this side is the unique side containing any point of the bridge apart from $\p_i$ and $\p_j$. Thus, no side of $F'$ in the $M$-decomposition contains an interior point of the bridge. Furthermore, observe that if any side in the $M$-decomposition contains an interior point of $\widehat{\p_i\o_F} \cup \widehat{\o_F \p_j}$, then it also contains an interior point of $\widehat{\p_{i'}\o_{F'}} \cup \widehat{\o_{F'} \p_{j'}}$. Consequently, we may repeat the argument with $F'$ playing the role of $F$, and since only finitely many cells may intersect $T$ at a point different from $\widehat{\p_i\p_j}$, and each is different from $F$, we conclude that $\widehat{\p_i\o_F} \cup \widehat{\o_F \p_j}$ may intersect any side of the $M$-decomposition only at $\p_i$ or $\p_j$.
\end{proof}

\section{Proof of Theorem~\ref{main2}}

\subsection{Proof of Part (i)}
Notice that $\delta(\F_m) \leq \delta(\rho)$ implies via a straightforward computation that $r_{\rm STam}(k, \S^2)\leq\arccos\frac{1}{\sqrt{2}\sin\left(\frac{k}{k-2}\frac{\pi}{4}\right)}$ holds for all $k\geq 5$. So, we are left to prove the inequality $\delta(\F_m) \leq \delta(\rho)$.

First, observe that based on the method of Remark~\ref{central-symmetry} we may assume that $\F_m$ is $\o$-symmetric. Furthermore, if all the centers of the caps are contained in a given great circle, then an easy computation yields the statement. Thus, we may assume that any open hemisphere contains the center of at least one cap, or in other words, the circumradius of any cell in the $D$-decomposition of the set of the centers is strictly less than $\frac{\pi}{2}$.
In the proof we use the $M$-decomposition of the set of centers of $\F_m$.
Let $\widehat{\c_i\c_j}$ be an edge of a cell $Q$ of the $D$-decomposition of the set of centers of $\F_m$, and let $C$ be the circumcircle of $Q$.
Recall that if $\widehat{\c_i\c_j}$ separates the center $\c$ of $C$ from $Q$, then the curve $\widehat{\c_i\c} \cup \widehat{\c\c_j}$ is called a bridge of the $D$-decomposition.
According to Lemma~\ref{lem:sphericalMolnar} if any separating side of the $D$-decomposition is replaced by the corresponding bridge, then we get the $M$-decomposition of the set of centers of $\F_m$.

We prove the statement for a wider class of packings. More specifically, we call a packing of spherical caps an \emph{$R$-separable} packing if any triple of caps whose centers are at pairwise distances at most $R$ is a TS-packing. In the following, we assume that $\F_m$ is a $(2R_{\rho})$-separable packing with $R_{\rho}= \arcsin (\sqrt{2} \sin \rho)$, and note that $R_{\rho} < \frac{\pi}{2}$ for any $\rho < \frac{\pi}{4}$. (Observe that for a cap of radius $R_{\rho}$ there is an inscribed equilateral quadrangle of side length $2\rho$.) In addition, we may assume that $\F_m$ is {\it $(2R_{\rho})$-saturated} in  $\S^2$, i.e., every point $\p$ of $\S^2$ is at distance at most $2R_{\rho}$ from the center of a cap in $\F_m$, as otherwise the spherical cap of center $\p$ and radius $\rho$ can be added to $\F_m$ preserving its $(2R_{\rho})$-separability. This implies that the radius of any circumdisk of the cells in the $D$-decomposition of the centers of $\F_m$ is at most $2 R_{\rho}$, and note that as the circumradius of any cell is strictly less than $\frac{\pi}{2}$, this condition is already satisfied if $R_{\rho} \geq \frac{\pi}{4}$.
%{\color{red} $2 R_{\rho} <\frac{\pi}{2}$ for any $\rho<\frac{\pi}{6}$}.

Thus, if the radius of $C$ is less than $R_{\rho}$, then $Q$ is a triangle containing the center of $C$ in its interior. Suppose for contradiction that there is a bridge associated to a side $E$ of $Q$, and let $Q'$ be the adjacent cell of the $D$-decomposition, with circumcircle $C'$. Then $E$ separates $Q'$ and the center of $C'$ in $C'$, implying that the great circle through $E$ does not separate the centers of $C$ and $C'$. Since neither $C$ nor $C'$ contains the center of any cap of $\F_m$ in its interior, from this it follows that the radius of $C'$ is strictly less than that of $C$; that is, it is strictly less than $R_{\rho}$ implying that $Q'$ is a triangle containing the center of $C'$ in its interior, a contradiction. Thus, if the radius of $C$ is less than $R_{\rho}$, then every side of $Q$ is a side in the $M$-decomposition of $\F_m$.

We further decompose the remaining cells of the $M$-decomposition by the spherical segments connecting the center of the circumcircle to the vertices of the corresponding Delaunay cell. We call the so obtained decomposition the \emph{refined $M$-decomposition}, which contains two types of cells:
\begin{itemize}
\item[(a)] triangles with side length at least $2\rho$ which contain their circumcenters in their interiors and their circumradii are less than $R_{\rho}$;
\item[(b)] the closure of the difference of two isosceles triangles with legs of length at least $R_{\rho}$ and common base of length at least $2 \rho$.
\end{itemize}

Let $P$ be a cell of the refined $M$-decomposition and let $\varphi$ denote the sum of the (internal) angles of $P$ at the vertices that are centers of elements of $\mathcal{F}_m$. We define the density of $\mathcal{F}_m$ in $P$ as
\begin{equation}\label{eq:celldensity}
\rho_P(\mathcal{F}_m) = \frac{\varphi(1-\cos \rho) }{\Sarea \left( P \right)}.
\end{equation}

In order to prove the inequality $\delta(\F_m) \leq \delta(\rho)$ for any $(2R_{\rho})$-separable packing $\F_m$ (which is also $(2R_{\rho})$-saturated in $\S^2$),
%where $\rho<\frac{\pi}{6}$,
we prove the following two lemmas from which the inequality at hand follows. Indeed, Lemma~\ref{lem:isosceles} applies to cells satisfying (b), and Lemma~\ref{lem:acute} applies to cells satisfying (a), where density in a cell is computed as in (\ref{eq:celldensity}).
%Finally, when $\frac{\pi}{6}\leq\rho<\frac{\pi}{4}$, then we apply Lemmas~\ref{lem:isosceles} and~\ref{lem:acute} directly to the $\o$-symmetric TS-packing $\F_m$.

\begin{Lemma}\label{lem:isosceles}
Let $Q$ be an isosceles spherical triangle, with vertices $\a_1, \a_2, \b$, where $\b$ is the apex. Let the length of the legs of $Q$ be $b$, and that of the base be $a$, where $2\rho\leq a < \pi$ and $\frac{a}{2}< b < \frac{\pi}{2}$ . Furthermore, let the angles on the base be $\alpha$, and the angle at the apex be $\beta$. For $i=1,2$, let $S_i$ denote the spherical cap of radius $\rho$, centered at $\a_i$. Let
\[
f(a,b) =
%\frac{\Sarea \left(Q \cap \left( S_1 \cup S_2 \right) \right)}{\Sarea \left( {\color{red} Q }\right) } =
\frac{2\alpha (1- \cos \rho)}{2 \alpha + \beta - \pi}
\]
denote the density of the packing $\{ S_1, S_2 \}$ in $Q$ as a function of $a$ and $b$. Then $f(a,b)$ is a strictly decreasing function of both $a$ and $b$.
\end{Lemma}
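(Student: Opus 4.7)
\smallskip

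\textbf{Plan of proof.} Since the numerator $2\alpha(1-\cos\rho)$ and denominator $2\alpha+\beta-\pi$ are both positive for a nondegenerate isosceles spherical triangle, and since $(1-\cos\rho)$ is a constant, the goal reduces to showing that $F(a,b) := \dfrac{2\alpha}{2\alpha+\beta-\pi}$ is strictly decreasing in each of $a$ and $b$. A direct computation of $\partial F/\partial a$ and $\partial F/\partial b$ shows (after cancellation of the $\alpha_a$ and $\alpha_b$ terms that appear linearly in both numerator and denominator) that these conditions are equivalent to
\begin{equation*}
\text{(C1)}:\ \alpha(1+\cos a) > (\pi-\beta)\cos b, \qquad \text{(C2)}:\ \pi - \beta > 2\alpha\cos b.
\end{equation*}

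The plan is to derive (C1) and (C2) from explicit formulas for the partials $\alpha_a,\alpha_b,\beta_a,\beta_b$ obtained from the spherical law of cosines. Specifically, starting from $\cos\alpha=\tan(a/2)/\tan b$ and $\cos\beta = (\cos a - \cos^2b)/\sin^2 b$, together with the law of sines $\sin\alpha\sin a = \sin\beta\sin b$, one gets
\[
\alpha_a = -\frac{\cos b}{2\sin\alpha\sin b\cos^2(a/2)},\quad \beta_a = \frac{1}{\sin\alpha\sin b},\quad \beta_b = -2\cos b\cdot\alpha_b,\quad \alpha_b>0.
\]

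To establish (C1), set $p(a) := \alpha(1+\cos a) - (\pi-\beta)\cos b$ with $b$ fixed and differentiate; using $1+\cos a = 2\cos^2(a/2)$ one finds $\alpha_a(1+\cos a) + \beta_a\cos b = 0$, so a clean cancellation yields
\[
p'(a) = -\alpha\sin a < 0 \qquad \text{on } a\in(0,\pi).
\]
At the degenerate boundary $a=2b$ one has $\alpha=0$ and $\beta=\pi$, hence $p(2b)=0$; therefore $p(a)>0$ on $(0,2b)$. The argument for (C2) is parallel: set $g(b) := \pi-\beta-2\alpha\cos b$ with $a$ fixed; using $-\beta_b = 2\alpha_b\cos b$, another cancellation gives $g'(b) = 2\alpha\sin b > 0$, and at the degenerate boundary $b=a/2$ (where the apex falls on the base) one has $\alpha=0$ and $\beta=\pi$, so $g(a/2)=0$; hence $g(b)>0$ for $b>a/2$.

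The only delicate point is verifying the two identities $\alpha_a(1+\cos a)+\beta_a\cos b = 0$ and $\beta_b+2\alpha_b\cos b=0$, which drive both cancellations — these are obtained routinely by implicit differentiation of the spherical law of cosines combined with the spherical law of sines, and they are the reason the final derivatives $p'(a)=-\alpha\sin a$ and $g'(b)=2\alpha\sin b$ are so simple. Once these are in place, the monotonicity of $f$ in both variables follows at once and the lemma is proved.
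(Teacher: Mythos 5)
Your proof is correct, but it takes a genuinely different route from the paper's. I verified your ingredients: from $\cos\alpha=\tan(a/2)/\tan b$ and $\cos\beta=(\cos a-\cos^2 b)/\sin^2 b$ one indeed obtains the stated partials, the two identities $\alpha_a(1+\cos a)+\beta_a\cos b=0$ and $\beta_b+2\alpha_b\cos b=0$ hold, the reductions of $\partial_a F<0$ and $\partial_b F<0$ to (C1) and (C2) are correct (using $\alpha_b>0$ and $\cos b>0$), and both inequalities follow by integrating $p'(a)=-\alpha\sin a$ and $g'(b)=2\alpha\sin b$ back from the degenerate boundary $a=2b$, where $\alpha=0$, $\beta=\pi$ and hence $p=g=0$. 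The paper proceeds differently: for monotonicity in $b$ it gives a soft geometric comparison, sandwiching the density of $S_1$ in the half-triangle with vertices $\a_1$, the midpoint of the base and the apex between the density of $S_1$ in the spherical cap of radius $b_1$ centered at $\a_1$; for monotonicity in $a$ it reparametrizes by $\alpha$ (which is decreasing in $a$ for fixed $b$), derives $\cot(\beta/2)=\cos b\tan\alpha$ from the dual law of cosines, and concludes via a chord-slope argument from the strict convexity of $\alpha\mapsto\arctan(\cos b\tan\alpha)$. Your scheme is more uniform — the same differentiate-and-integrate-from-the-degenerate-configuration argument handles both variables, and it makes transparent that both inequalities degenerate to equalities exactly when the triangle flattens — at the price of more implicit differentiation; the paper's version trades that bookkeeping for one geometric comparison and one convexity observation.
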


\begin{proof}
Let $b_1 < b_2$, $\m$ be the midpoint of $\widehat{\a_1 \a_2}$ (cf. Figure~\ref{fig:thm2proof1}), and $\b_1$ and $\b_2$ be two points on the bisector of $\widehat{\a_1\a_2}$ such that the spherical distance of $\b_i$ and $\a_1$ is $b_i$. Let $Q_i$ be the triangle with vertices $\a_1, \m, \b_i$. Note that
\[
f(a,b_i) = \frac{\Sarea \left(Q_i \cap S_1 \right)}{\Sarea \left( Q_i \right) } .
\]
Let $S$ denote the spherical cap centered at $\a_1$ and of radius $b_1$. Then we have
\[
\frac{\Sarea \left(Q_1 \cap S_1 \right)}{\Sarea \left( Q_1 \right) } > \frac{\Sarea (S_1)}{\Sarea(S)} > \frac{\Sarea \left(Q_2 \cap S_1 \right)}{\Sarea \left( Q_2 \right) },
\]
which yields that $f(a,b)$ is a strictly decreasing function of $b$.

We show that $f(a,b)$ is a strictly decreasing function of $a$. First, we note that for any fixed value of $b$, $\alpha$ is a strictly decreasing function of $a$, which implies that it is sufficient to show that the function $g(\alpha,b) = \frac{\Sarea \left(Q \cap \left( S_1 \cup S_2 \right) \right)}{\Sarea \left( Q \right) }$ of $\alpha$ and $b$ is a strictly increasing function of $\alpha$. Note that by the dual spherical law of cosines applied for the spherical triangle with vertices $\a_1, \m, \b$, we have
\[
0 = - \cos \alpha \cos \frac{\beta}{2} + \sin \alpha \sin \frac{\beta}{2} \cos b,
\]
implying that $\tan \left( \frac{\pi}{2}- \frac{\beta}{2} \right) = \cot \frac{\beta}{2} = \cos b \tan \alpha$. Thus,
\[
g(\alpha,b) = \frac{\alpha (1- \cos \rho)}{2\alpha + \beta - \pi} = \frac{\frac{1}{2}(1- \cos \rho)}{1 - \frac{1}{\alpha}\left( \frac{\pi}{2} - \frac{\beta}{2} \right) } = \frac{\frac{1}{2}(1- \cos \rho)}{ 1 - \frac{\arctan (\cos b \tan \alpha)}{\alpha} }.
\]
Since the expression $\frac{\arctan (\cos b \tan \alpha)}{\alpha}$ is the slope of the chord of the graph of the function $\alpha \mapsto \arctan (\cos b \tan \alpha)$ connecting the points of the graph at $0$ and $\alpha$, to prove the monotonicity property of $g$ it is sufficient to prove that this function is strictly convex. Hence, the assertion follows from the fact that
\[
\partial_{\alpha}^2 \arctan (\cos b \tan \alpha) = \frac{2 \sin^2 b \cos b (1+ \tan^2 \alpha)}{(1+ \cos^2 b \tan^2 \alpha)^2} > 0
\]
if $0 < b < \frac{\pi}{2}$.
\end{proof}

\begin{figure}[h]
\begin{minipage}[t]{0.35\linewidth}
\centering
\includegraphics[width=0.6\textwidth]{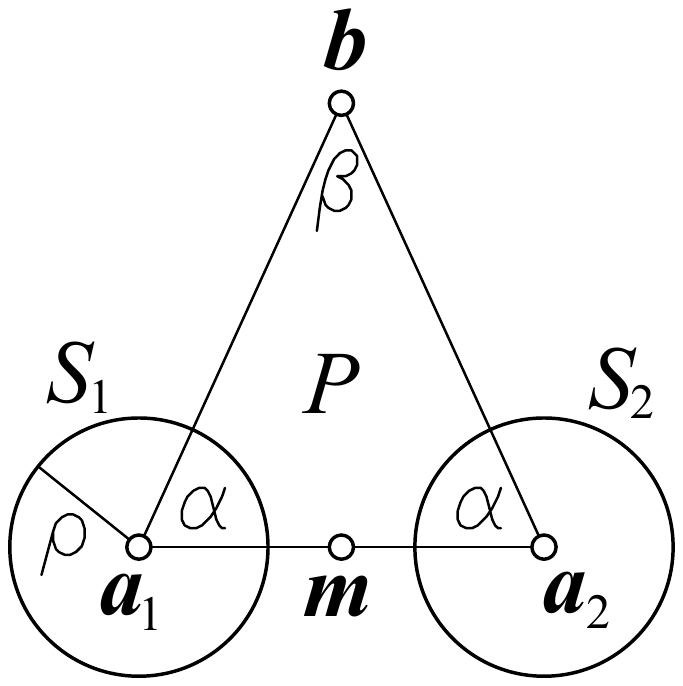}
\caption{An illustration for the proof of Lemma~\ref{lem:isosceles}.}
\label{fig:thm2proof1}
\end{minipage}
\hglue1cm
\begin{minipage}[t]{0.52\linewidth}
\centering
\includegraphics[width=0.85\textwidth]{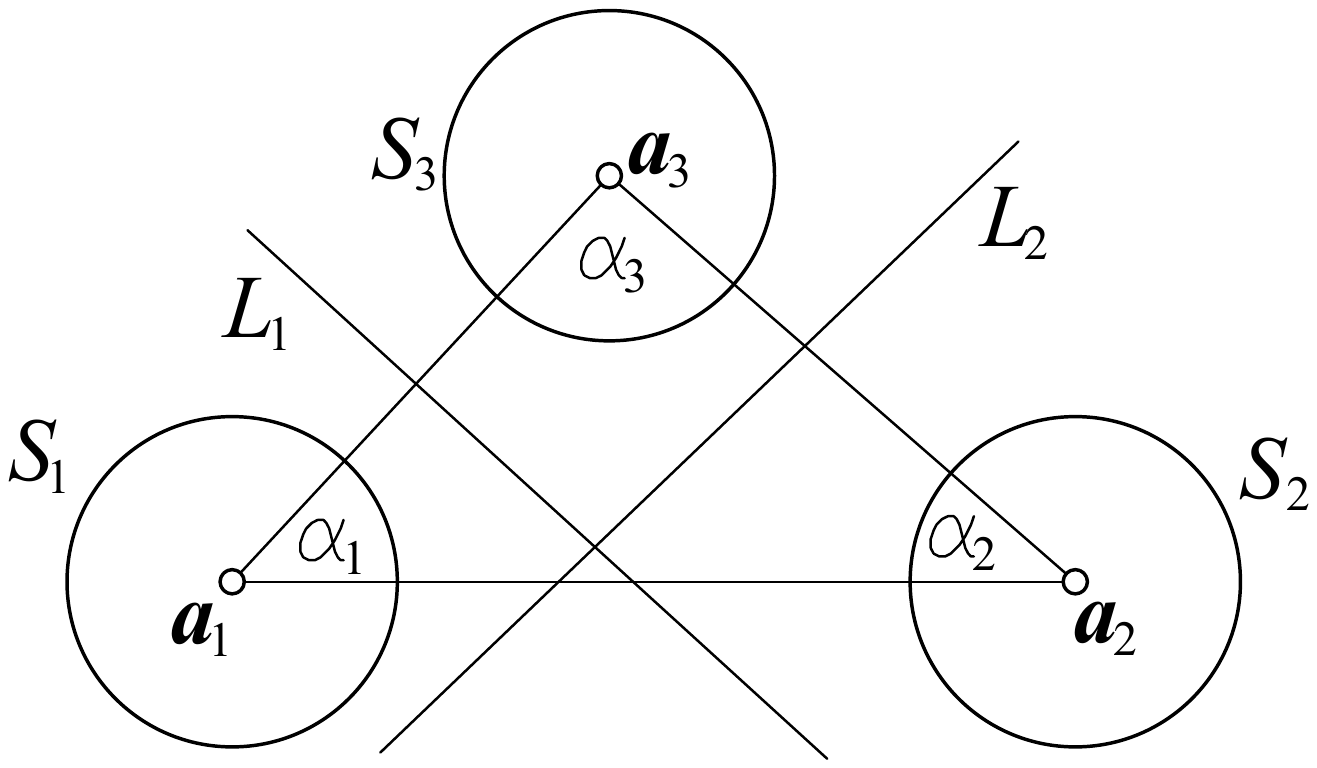}
\caption{An illustration for the proof of Lemma~\ref{lem:acute}.}
\label{fig:thm2proof2}
\end{minipage}
\end{figure}

Based on the classification of the cells of the refined $M$-decomposition above, in Lemma~\ref{lem:acute}, for brevity, we call a spherical triangle $Q$ with side lengths at least $2 \rho$, containing its circumcenter in its interior, and with circumradius less than $R_{\rho}$, a \emph{type (a)} triangle, if the spherical disks of radius $\rho$ centered at the vertices of $Q$ are members of a TS-family.

\begin{Lemma}\label{lem:acute}
Let $Q$ be a type (a) spherical triangle. Then the density computed in $Q$ of the spherical caps of radius $\rho$ centered at the vertices of $Q$ is less than $\delta(\rho)$.
\end{Lemma}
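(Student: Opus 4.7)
The inequality $\rho_Q<\delta(\rho)$ unwraps to an inequality on the angle sum. Writing $S=\alpha+\beta+\gamma$ and $\psi=\arcsin(1/(\sqrt{2}\cos\rho))$, and using $\Sarea(Q)=S-\pi$ together with the explicit formula for $\delta(\rho)$ from Definition~\ref{defn:density_TSpacking}, a short algebraic manipulation (after cancelling the factor $1-\cos\rho$ and cross-multiplying) gives the equivalence
\[
\rho_Q<\delta(\rho)\iff S>4\psi.
\]
A direct spherical-trig computation (spherical law of cosines) shows that the reference triangle $T^{\ast}$ of Definition~\ref{defn:density_TSpacking}, with two sides of length $2\rho$ and one side of length $2R_\rho$, has circumradius exactly $R_\rho$, base angles equal to $\psi$, and apex angle equal to $2\psi$, so that $S(T^{\ast})=4\psi$. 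Thus $T^{\ast}$ sits on the boundary of the type (a) class but is not itself type (a), and the remaining task is to prove the strict inequality $S>4\psi$ for every type (a) triangle $Q$ whose vertex caps form a TS-family.

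The heart of the argument will be the analysis of the isoceles triangles with both legs of length exactly $2\rho$, parametrized by their apex angle $\gamma$. Placing the apex at the north pole, the two base vertices on the $2\rho$-colatitude parallel with azimuthal separation $\gamma$, and computing the spherical circumcenter on the perpendicular bisector of the base gives the identity
\[
\tan R=\frac{\tan\rho}{\cos(\gamma/2)},
\]
so that the type (a) condition $R\le R_\rho$ is equivalent to $\gamma\le 2\psi$. Simultaneously, each leg being of length $2\rho$ means the corresponding pair of caps is tangent, so the unique great circle separating that pair is the perpendicular bisector of the leg. For the three caps to form a TS-family, this bisector must be at spherical distance at least $\rho$ from the third vertex; a direct coordinate computation (computing $\langle C,N\rangle$ for the normal $N$ to the bisecting plane) reduces this requirement to $\gamma\ge 2\psi$. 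Hence strict type (a) (forcing $\gamma<2\psi$) and TS (forcing $\gamma\ge 2\psi$) cannot coexist; the only equality case is $\gamma=2\psi$ with $R=R_\rho$, which is precisely $T^{\ast}$ --- not strict type (a). In particular, no type (a) TS triangle can have two sides of length exactly $2\rho$.

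To conclude, I will reduce the general case to this isoceles base case via a symmetrization. Order the sides $a\le b\le c$ and fix the circumradius $R\le R_\rho$; parametrize triangles by the three central angles $2\phi_a,2\phi_b,2\phi_c$ on the circle of angular radius $R$, subject to $\phi_a+\phi_b+\phi_c=\pi$ and $\phi_i\ge\phi^{\ast}:=\arcsin(\sin\rho/\sin R)$, where $\phi^{\ast}\ge\pi/4$ with equality only at $R=R_\rho$. An extremization argument analogous to the Euclidean case --- in which a triangle inscribed in a given circle with lower-bounded side lengths and minimal angle sum must be isoceles with two sides at the bound --- reduces the minimization at fixed $R$ to the extreme face $\phi_a=\phi_b=\phi^{\ast}$, i.e., the isoceles-with-legs-$2\rho$ configuration. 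The second paragraph then forces the minimizer to be $T^{\ast}$ with $R=R_\rho$, which is excluded from strict type (a). Consequently $S>4\psi$ strictly for every type (a) TS triangle, yielding $\rho_Q<\delta(\rho)$.

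The principal obstacle will be carrying out the symmetrization of the last paragraph while simultaneously respecting all three families of constraints: the side-length bounds $a,b,c\ge 2\rho$, the circumradius bound $R<R_\rho$, and the three TS-inequalities (one per pair of caps, each demanding a separating great circle disjoint from the third cap), any of which may become the binding constraint depending on the geometry of $Q$. The isoceles-with-legs-$2\rho$ computation is the engine of the proof: it pins the exact threshold $\gamma=2\psi$ at which the type (a) and TS conditions just fail to overlap, and together with a continuity and compactness argument on the constraint region it delivers the strict inequality throughout the interior.
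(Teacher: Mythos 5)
Your first two paragraphs are correct and I could verify them: the reduction of the claim to the angle-sum inequality $S>4\psi$, the identification of the reference triangle $T^{\ast}$ with sides $2\rho,2\rho,2R_\rho$, circumradius $R_\rho$ and angles $\psi,\psi,2\psi$, and the computation showing that for an isosceles triangle with both legs equal to $2\rho$ the circumradius condition $R\le R_\rho$ forces $\gamma\le 2\psi$ while separability of either tangent pair from the third cap forces $\gamma\ge 2\psi$. This incompatibility is exactly the equality analysis that closes the paper's own proof (there phrased as: the area is minimal only when the two separating great circles are perpendicular and every cap touches both of them).

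The gap is in the third paragraph, and as stated it is fatal. Your symmetrization at fixed circumradius $R$ minimizes the area over the polytope $\{\phi_a+\phi_b+\phi_c=\pi,\ \phi_i\ge\phi^{\ast}(R)\}$ \emph{without} the TS constraint, and lands at the corner $\phi_a=\phi_b=\phi^{\ast}(R)$, i.e.\ the isosceles triangle with legs $2\rho$ inscribed in the circle of radius $R$. For $R<R_\rho$ this triangle has apex angle $\gamma_R<2\psi$, and since the area of the legs-$2\rho$ isosceles triangle is strictly increasing in its apex angle, its area is \emph{strictly smaller} than $\Sarea(T^{\ast})=4\psi-\pi$. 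So your chain of inequalities only yields $\Sarea(Q)\ge (\text{a quantity below } 4\psi-\pi)$, which is weaker than what is needed. Observing that this relaxed minimizer fails to be TS does not repair the bound: it shows the relaxed minimum is not attained by a feasible triangle, not that every feasible triangle lies above the threshold. To close the argument you would have to locate the minimum of the area on the boundary of the TS-feasible set inside the polytope, i.e.\ analyze the configurations where one of the three separation requirements becomes tight --- and each such requirement is an existential statement about a great circle avoiding three caps, not an inequality in $(\phi_a,\phi_b,\phi_c)$ that your symmetrization respects. That analysis is precisely the content of the paper's proof, which keeps the separating great circles $L_1,L_2$ explicit throughout, perturbs a vertex along a Lexell circle (preserving area or decreasing it) whenever some cap fails to touch a separating circle, and only then is left with the few rigid touching configurations it computes directly. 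A ``continuity and compactness argument'' establishes that the constrained minimum is attained but says nothing about its value, so the final strict inequality $S>4\psi$ does not follow from what you have written. (The unchecked concavity needed for the fixed-$R$ symmetrization is a further, smaller issue.)
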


\begin{proof}
By compactness, the density of a type (a) triangle is either maximal for a type (a) triangle, or it can be upper bounded by the density of a triangle whose circumcenter lies on its boundary. Note that in the latter case we can decompose it into two triangles satisfying the conditions in Lemma~\ref{lem:isosceles}, and hence, the assertion follows from Lemma~\ref{lem:isosceles}. Thus, to prove Lemma~\ref{lem:acute} it is sufficient to show that the density of a type (a) triangle does not attain its maximum in this family, i.e., for any type (a) triangle there is another type (a) triangle with greater density.

Let $\p_i$, $i=1,2,3$ denote the vertices of $Q$, and let $S_i$ denote the spherical cap of radius $\rho$ centered at $\p_i$. Since $\{ S_1, S_2 S_3\}$ is a TS-family, we may assume, without loss of generality, that there is a great circle $L_1$ separating $S_1$ from $S_2$ and $S_3$, and another great circle $L_2$ separating $S_2$ from $S_1$ and $S_3$ (cf. Figure~\ref{fig:thm2proof2}).

Let the angle of $Q$ at $\p_i$ be $\alpha_i$. Then the density of the arrangement in $Q$ is
\[
\delta(Q) = \frac{(\alpha_1+\alpha_2+\alpha_3)(1-\cos \rho)}{\alpha_1+\alpha_2+\alpha_3 - \pi} = \frac{1- \cos \rho}{1 - \frac{\pi}{\Sarea(Q)}}.
\]
Thus, to prove the statement we need to modify the arrangement to decrease the area of $Q$. Note that with two vertices of $Q$ fixed, area remains the same if the third vertex of $Q$ moves on the Lexell circle of the two fixed vertices. Thus, if a cap $S_i$ does not touch any of $L_1$ and $L_2$, then $\p_i$ can be moved to decrease $\Sarea(Q)$.

Assume that a separating line, say $L_1$, is not touched by any of the spherical caps. If there is a cap not touching $L_2$, then we may apply the previous argument. Thus, we may assume that $L_2$ is touched by all three caps. This implies that the distance of $\p_1$ and $\p_3$ from $L_1$ is $\rho$. Observe that if we move $\p_1$ towards $\p_3$ while keeping its distance from $L_2$ fixed, we obtain a spherical triangle $Q'$ strictly contained in $Q$, yielding $\Sarea(Q) > \Sarea(Q')$. Furthermore, if both $L_1$ and $L_2$ are touched by one of the $S_i$s but one of them can be slightly moved not to touch any of the $S_i$s, we may apply a similar argument. Thus, we may assume that both $L_1$ and $L_2$ satisfy one of the following:
\begin{enumerate}
\item[(i)] it is touched by exactly two of the $S_i$s at the same point;
\item[(ii)] it is touched by all the three caps.
\end{enumerate}

First, consider the case that both $L_1$ and $L_2$ satisfy (i). Then $S_3$ touches both $L_1$ and $L_2$, $S_1$ touches $L_1$ and $S_2$ touches $L_2$, which implies that $Q$ is an isosceles triangle whose legs are of length $2 \rho$. Let $\beta$ denote the angle at $\p_3$. In the same way as in Lemma~\ref{lem:isosceles}, we obtain that $\Sarea(Q) = \beta - 2 \arctan \left( \tan \frac{\beta}{2} \cos b \right)$. The derivative of this quantity is zero only if $\tan^2 \frac{\beta}{2} \cos b = 1$, from which we have that the angle of $Q$ at $\p_1$ and $\p_2$ is $\frac{\beta}{2}$. This clearly implies that the circumcenter of $Q$ is the midpoint of $\widehat{\p_1 \p_2}$, and thus, if $Q$ satisfies our conditions we can decrease its area.

Next, consider the case that $L_1$ satisfies (i) and $L_2$ satisfies (ii). Then it is easy to see that $L_1$ and $L_2$ are perpendicular, and $L_2$ can be slightly modified not to touch any of the caps. Thus, we have that every cap touches every separating line.
Let $\c$ and $-\c$ denote the two intersection points of $L_1$ and $L_2$. Since any side of $Q$ is at most $2 R_{\rho} < \pi$, we may assume that $\p_1$ and $\p_2$ are both closer to $\c$ than to $-\c$. Now, we drop the condition that the circumcenter is contained in $T$, and show that even in this more general setting we have that $\Sarea(Q)$ is minimal if $L_1$ and $L_2$ are perpendicular, and $\p_3$ is closer to $\c$ than to $-\c$. First, if $\p_3$ is closer to $-\c$, reflecting about the plane through $\o$ and perpendicular to $[\c,-\c]$ we obtain a triangle of smaller area, and hence, we assume that the distance of $\p_3$ and $\c$, which we denote by $m$, is less than $\frac{\pi}{2}$. Let $0 < a < \frac{\pi}{2}$ denote the distance of $\p_1$ and $\c$, and let $0 < \alpha < \frac{\pi}{2}$ denote the half angle of the lune containing $S_1$. Then, if $A = \Sarea (Q)$, then
\[
\tan \frac{A}{4} = 2 \tan \frac{a}{2} \tan \frac{m}{2} = 2 \cdot \frac{\sin a}{\cos a + 1} \cdot \frac{\sin m}{ \cos m +1}.
\]
Using the spherical law of sines and the notation $X= \sin \rho$ and $Y = \sin \alpha$, we obtain that
\[
\tan \frac{A}{4} = \frac{2 X^2}{(\sqrt{Y^2-X^2}+Y)(\sqrt{1-Y^2-X^2}+\sqrt{1-Y^2})}
\]
\[ 
= 2 (Y - \sqrt{Y^2-X^2})(\sqrt{1-Y^2}-\sqrt{1-Y^2-X^2}).
\]
An elementary computation shows that the function $x \mapsto \sqrt{x}-\sqrt{x-a}$ is strictly log-convex for $x > a$. This yields that
\[
\tan \frac{A}{4} \geq 2\left( \frac{1}{2} - \sqrt{\frac{1}{2}-X^2} \right)^2,
\]
with equality if and only if $Y=\frac{1}{\sqrt{2}}$, that is, if $\alpha = \frac{\pi}{4}$ and $L_1$ and $L_2$ are perpendicular.
\end{proof}

\subsection{Proof of Part (ii)}\label{spherical-construction}

Let $L^h$ be the lune in $\S^2$ whose two vertices are the points $(0,1,0)$ and $(0,-1,0)$ such that the width of $L^h$ is $2 \alpha$ for some given $0 < \alpha < \frac{\pi}{2}$, and its bisector is contained in the closed half plane defined by the equations $z=0$ and $x \geq 0$. Let us dissect $L^h$ into $k$ mutually congruent lunes of width $\frac{2\alpha}{k}$ by half great circles with $(0,1,0)$ and $(0,-1,0)$ as their endpoints. We label these lunes by $L^h_1, L^h_2, \ldots, L^h_k$. For $i=1,2,\ldots,k$, let $L^v_i$ denote the rotated copy of $L^h_i$ by $\frac{\pi}{2}$ around the $x$-axis (cf. Figure~\ref{fig:remark4}). We denote the spherical quadrangle $L^h_i \cap L^v_j$ by $Q_{ij}$. Let $\varepsilon > 0$ be an arbitrary fixed value. We intend to show that if $k$ is sufficiently large, the inradius of $Q_{ij}$ is at least $(1-\varepsilon) \rho(\alpha, k)$, where  $\rho(\alpha, k) = \frac{\alpha \cos \alpha}{k}$. We do it by showing that if $k$ is large, then any point of $Q_{ij}$ lying on the bisector of $L^h_i$ is at spherical distance at least $(1-\varepsilon) \rho(\alpha, k)$ from the sides of $L^h_i$. Indeed, from this fact and by symmetry we obtain that for large $k$ any point of $Q_{ij}$ lying on the bisector of $L^v_j$ is at distance at least $(1-\varepsilon) \rho(\alpha, k)$ from the sides of $L^v_j$, which yields that the intersection point of the bisectors of $L^h_i$ and $L^v_j$ is at distance at least $(1-\varepsilon) \rho(\alpha, k)$ from any side of $Q_{ij}$.

Let $\p$ be any point of $Q_{ij}$ on the bisector of $L^h_i$. Let $L^v$ denote the rotated copy of $L^h$ by $\frac{\pi}{2}$ around the $x$-axis and $L$ be the bisector of $L^v$. Note that the distance of $\p$ from $L$ is at most $\alpha$. Furthermore, the distance of $\p$ from the two sides of $L^h_i$ is a strictly decreasing function of the distance of $\p$ from $L$. Thus, it is sufficient to show that if $\p_0$ is the point on the bisector of $L^h_i$ at distance $\frac{\pi}{2}-\alpha$ from the vertex $\c=(0, 1,0)$ of $L^h_i$, then
the distance of $\p_0$ from the two sides of $L^h_i$ is at least $(1-\varepsilon) \rho(\alpha, k)$. Let $\q$ be the orthogonal projection of $\p_0$ onto one of the sides of $L^h_i$ (cf. Figure~\ref{fig:remark4_part2}), and let $\rho$ denote the length of $\widehat{\p_0\q}$. Then $\p_0$, $\q$ and $\c$ are the vertices of a spherical right triangle with the right angle at $\q$, and thus, the spherical law of sines yields $\sin (\rho) = \sin \frac{\alpha}{k} \cos \alpha$. On the other hand, as $\lim_{x \to 0^{+}} \frac{\sin x}{x} = 1$, we have that if $k$ is sufficiently large, then $\sin (\rho) > \sin \left( \frac{(1-\varepsilon) \alpha}{k} \cos \alpha \right)$, implying $r_{\rm STam}(2k^2, \S^2) \geq  (1-\varepsilon) \rho(\alpha,k)$ for any fixed value of $\alpha$ and $\varepsilon$. Thus, our estimate for all sufficiently large values of $m$ follows from the observation that $\sqrt{2} \cdot \frac{49}{180} \pi \cdot \cos (49^{\circ}) > 0.793$.

\begin{figure}[h]
\begin{minipage}[t]{0.48\linewidth}
\centering
\includegraphics[width=0.8\textwidth]{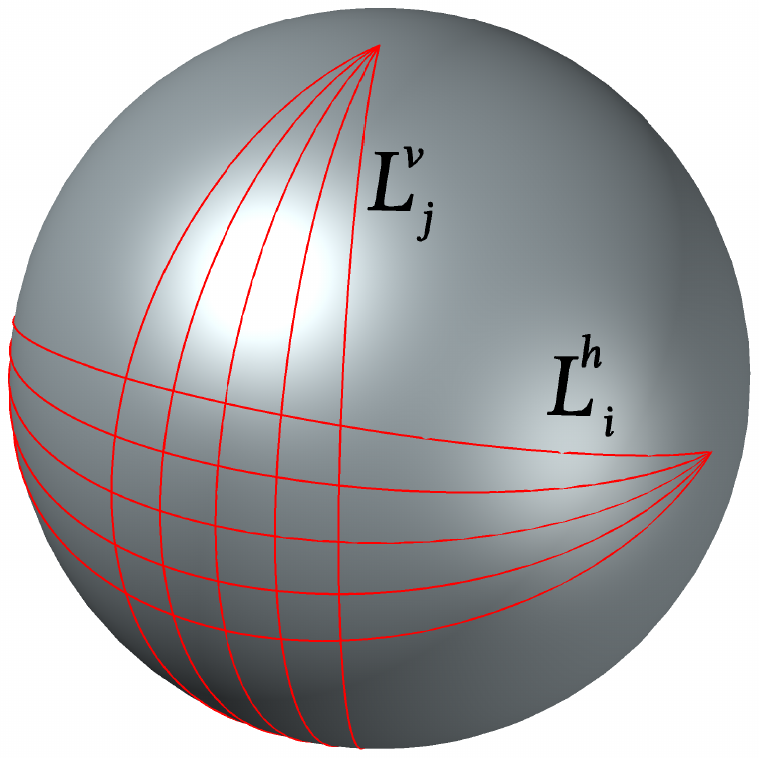}
\caption{The arrangement of lunes constructed in Section \ref{spherical-construction}.}
\label{fig:remark4}
\end{minipage}
\hglue1cm
\begin{minipage}[t]{0.48\linewidth}
\centering
\includegraphics[width=0.8\textwidth]{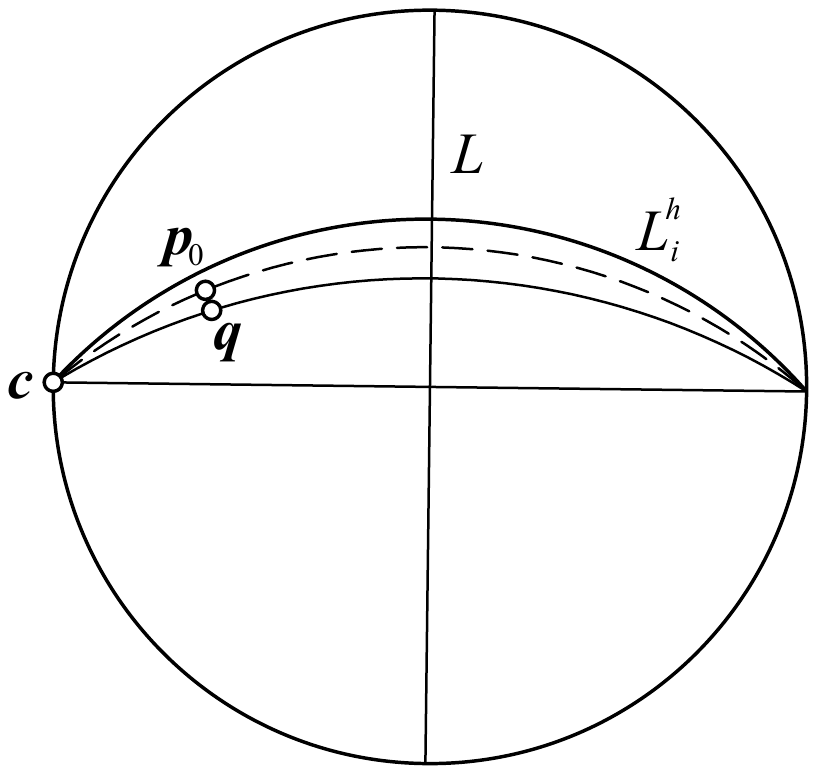}
\caption{The front view of the arrangement of lunes in Section \ref{spherical-construction}.}
\label{fig:remark4_part2}
\end{minipage}
\end{figure}

\bigskip

\section{Proof of Theorem~\ref{sub-main2}}

According to Remark~\ref{equivalence}, $\tau_{\rm{csep}}(\E^{3})$ is equal to the maximum number of spherical caps of angular radius $\frac{\pi}{6}$ that form a TS-packing in $\S^2$. Hence, using Remark~\ref{central-symmetry}, the octahedral TS-packing of Figure 1, and Corollary~\ref{upper bound for the separable kissing number}, one obtains in a straightforward way that either $\tau_{\rm{csep}}(\E^{3})=8$ (which can be achieved by a TS-packing of $8$ spherical caps of angular radius $\frac{\pi}{6}$ possessing octahedral symmetry in $\S^2$) or $\tau_{\rm{csep}}(\E^{3})=10$.

For contradiction, suppose that there is a totally separable family $\mathcal{F}$ of $10$ spherical caps of angular radius $\frac{\pi}{6}$ on $\S^2$.
We may assume that the family consists of pairs of spherical caps symmetric to $\o$, i.e., $\mathcal{F} = \{ C_1, C_2, \ldots, C_5, -C_1, -C_2, \ldots, -C_5 \}$.

First, we show that there are two great circles $G_1$ and $G_2$ that dissect $\S^2$ into four lunes each of which contains $2$ or $3$ elements of $\mathcal{F}$.
Consider any separating great circle $G$, and, without loss of generality, assume that one of the closed hemispheres bounded by $G$ contains $C_1, C_2, \ldots, C_5$. Let $G_1$ be a great circle separating $C_1$ and $C_2$. If $G_1$ separates $C_1$ at least one and at most two of the spherical caps $C_3, C_4$ and $C_5$, we are done. Thus, after relabeling the spherical caps if necessary, we have that $G_1$ separates $C_1$ from $C_2, C_3, C_4, C_5$. Now, let the great circle $G_2$ separate $C_2$ from $C_3$. Similarly like for $G_1$, we may assume that $G_2$ separates $C_2$ from $C_1, C_3, C_4, C_5$. Then $G_1$ and $G_2$ dissect $\S^2$ into four lunes satisfying the required conditions.

Now, let $G_1$ and $G_2$ be two separating great circles satisfying the condition in the previous paragraph, i.e., we assume that one of the lunes generated by them contains two elements of $\mathcal{F}$ say, $C_1$ and $C_2$, and another lune generated by them contains three elements of $\mathcal{F}$ say, $C_3, C_4$ and $C_5$. We denote the first lune by $L$ and the second lune by $L'$. Let the angle of $L$ be $\alpha$. This implies that the angle of $L'$ is $\beta = \pi-\alpha$.
In the remaining part of this section we show that $\alpha\geq \alpha_0$ and $\beta\geq\beta_0$ with $\alpha_0+\beta_0>\pi$, finishing the proof of Theorem~\ref{sub-main2} by contradiction.

First, we examine $L$. By Lemma~\ref{lem:thickness}, the value of $\alpha$ attains its minimal value $\alpha_0$ if and only if $C_1$ and $C_2$ touch each other, and both of them touch both sides of $L$ from inside. In this case $\frac{\pi}{6}$ is the inradius of a spherical triangle of side lengths $\frac{\pi}{2}, \frac{\pi}{2}$ and $\alpha_0$. Thus, (\ref{eq:radius}) yields that $\tan \frac{\pi}{6} = \sin \frac{\alpha_0}{2}$, implying that
\[
\alpha \geq \alpha_0 = 2 \arcsin \frac{1}{\sqrt{3}} > 2 \arcsin \frac{1}{2} = \frac{\pi}{3} .
\]

Next, we consider $L'$. Let $S_1$ and $S_2$ be the two sides of $L'$, and consider the system $\mathcal{S} = \{ S_1, S_2, C_3, C_4, C_5 \}$. Let $T$ be the set of the \emph{touching points} of $\mathcal{S}$, i.e., the set consisting of the points where two spherical caps, or a spherical cap and a side of $L'$ touch. We say that $\mathcal{S}$ is \emph{stable} if
\begin{enumerate}
\item[(a)] the midpoint of each side $S_i$ of $L'$ is contained in the spherical convex hull of $S_i \cap T$,
\item[(b)] each spherical cap $C_j$ is either disjoint from $T$, or its center is contained in the spherical convex hull of $C_j \cap T$.
\end{enumerate}
We remark that if $\mathcal{S}$ is not stable, then we can modify it to obtain a lune with smaller angle that contains $3$ non-overlapping spherical caps of radius $\frac{\pi}{6}$, or a stable system in a lune with the same angle. Indeed, if the midpoint of $S_i$ is not contained in the spherical convex hull of $S_i \cap T$, then we may slightly rotate $S_i$ to obtain a lune of smaller area containing all the $C_j$, where we recall that the area of a lune is proportional to its angle. On the other hand, if a cap $C_j$ intersects $T$, but its center is not contained in the spherical convex hull of $C_j \cap T$, then we may move $C_j$ slightly to obtain a spherical cap in $L'$ of radius $\frac{\pi}{6}$ that does not touch any other element of the system. If the modified system does not satisfies (a), we can slightly rotate a side of $L'$, and if it does not satisfies (b), we can slightly move another cap. Thus, the minimum of $\beta$, which we denote by $\beta_0$, is attained by a stable system.

Now, we show that if $\mathcal{S}$ is stable, then, by a suitable choice of indices, we have that
\begin{enumerate}
\item[(1)] $C_3$ and $C_4$ touch each other and each of them contains a midpoint of a side of $L'$; or
\item[(2)] $C_3$ and $C_5$ contain a midpoint of a side of $L'$, and $C_4$ touches $C_3$ and $C_5$ at opposite points on its boundary; or
\item[(3)] $C_3$ and $C_5$ touch both sides of $L'$, and $C_4$ touches $C_3$, $C_5$ and $S_2$; or
\item[(4)] $C_3$ and $C_5$ touch both sides of $L'$, and $C_4$ touches $C_3$ and $C_5$ at opposite points on its boundary (see Figure~\ref{stable}).
\end{enumerate}

\begin{figure}[ht]
\begin{center}
\includegraphics[width=0.7\textwidth]{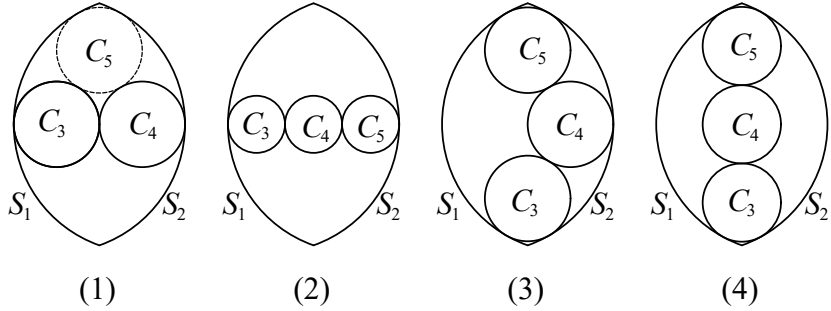}
\caption{The four types of stable systems.}
\label{stable}
\end{center}
\end{figure}

Assume that $\mathcal{S}$ is stable, and first, consider the case when there is a spherical cap say, $C_5$, that is disjoint from $T$. Note that $C_3$ and $C_4$ touch each side of $L'$ in at most one point. If the midpoints of $S_1$ and $S_2$ do not belong to $T$, then $C_3$ and $C_4$ must touch both $S_1$ and $S_2$ and each other and therefore $\beta=\alpha_0 < \frac{2\pi}{3} = 4 \cdot \frac{\pi}{6}$, implying that $L'$ does not contain a third non-overlapping spherical cap of radius $\frac{\pi}{6}$. If a midpoint of a side of $L'$ belongs to $T$, then by the stability condition, both midpoints must belong to $T$, and we have (1). Finally, using the computation on (3) at the end of this section, it follows that $C_5$ must be tangent to $C_3$ and $C_4$ and must touch both $S_1$ and $S_2$, a contradiction.

Next, consider the case when there is no spherical cap disjoint from $T$. Assume that the midpoints of both sides of $L'$ belong to $T$, say $C_3$ and $C_5$ contain these points. Then, by (b), the point of $C_3$  (resp., $C_5$) opposite to the midpoint of $S_i$ must belong to $T$, which implies that either $C_3$ and $C_5$ touch each other, or $C_4$ touches both, implying (1) and (2), respectively. If exactly one midpoint is in $T$ or if no midpoint is in $T$, a similar elementary consideration shows that (3) or (4) is satisfied, respectively.

Finally, we compute the angles of the lunes in the stable systems satisfying (1)-(4); we denote this angle of a system satisfying (i) by $\beta_i$.  Clearly, $\beta_1 = \frac{2\pi}{3}$ and $\beta_2 =\beta_4= \pi$. So, we are left with computing $\beta_3$. For $i=3,4,5$, let $\p_i$ denote the center of $C_i$ and let $\q_i$ denote the point where $C_i$ touches $S_2$. Furthermore, let $\x$ denote
the vertex of $L'$ closer to $\p_3$. Using a spherical coordinate system on $\S^2$ with the side $S_2$ being on the Equator, one can easily compute that $l(\widehat{\q_3\q_4}) =2 \arcsin \tan \frac{\pi}{6} = 2 \arcsin \frac{1}{\sqrt{3}}$ and so, $l(\widehat{\q_3 \x}) = \frac{\pi}{2} - 2 \arcsin \frac{1}{\sqrt{3}}$. Thus, applying the law of sines and algebraic transformations, we obtain that $\beta_3 = \frac{2\pi}{3}$. This implies that
\[
\beta \geq \beta_0= \min \{ \beta_1, \beta_2, \beta_3, \beta_4 \} = \frac{2\pi}{3},
\]
and therefore $\alpha_0+\beta_0>\pi$, a contradiction. This completes the proof of Theorem~\ref{sub-main2}.

\bigskip

\section{Proof of Theorem~\ref{main3}}

\subsection{Proof of Part (i)}  \label{subsec:part1}

If $n=2$, then $G_1$ and $G_2$ divides $\S^2$ into four lunes, and there is a lune with half angle at most $\frac{\pi}{4}$. Thus, the assertion follows from the fact that the inradius of a lune is equal to its half angle.

\subsection{Proof of Part (ii)}  \label{subsec:part2}

If $n=3$ and the three great circles meet at the same pair of antipodes, then by the same observation as above, there is a cell with inradius at most $\frac{\pi}{6}<\arcsin \frac{1}{\sqrt{3}}$. On the other hand, if the great circles are in nondegenerate position, then the statement follows from Theorem~\ref{thm:few_caps}.

\subsection{Proof of Part (iii)}    \label{subsec:part3}

In the following we consider the case that $n=4$. Then, depending on the degeneracy of the positions of the great circles, we have one of the following three cases.

\emph{Case 1:} all of $G_1, \ldots, G_4$ meet at the same pair of antipodes.

In this case we may apply our previous observation and conclude that one of the lunes defined by the great circles has inradius at most $\frac{\pi}{8} < \arcsin \frac{1}{\sqrt{5}}$.

\emph{Case 2:} three of the great circles, say $G_1, G_2, G_3$, meet at the same pair of antipodes, and $G_4$ does not.

Let $\rho > 0$ denote the largest value such that each cell in the decomposition defined by the family $\{ G_1, G_2, G_3, G_4\}$ contains a spherical cap of radius $\rho$. Then each of the six lunes in the decomposition defined by $\{ G_1, G_2, G_3 \}$ contains two nonoverlapping spherical caps of radius $\rho$.
For any $0 < \varphi < \pi$, let us denote by $\rho(\varphi)$ the inradius of an isosceles spherical triangle of side lengths $\frac{\pi}{2}, \frac{\pi}{2}$ and $\varphi$. Our proof of Part (iii) of Theorem~\ref{main3} in Case 2 is based on the next lemma.

\begin{Lemma}\label{lem:thickness}
If a lune $L$ of angle $\varphi$ with $0 < \varphi < \pi$ contains two nonoverlapping spherical caps $C_1, C_2$ of radius $\rho > 0$, then $\rho \leq \rho(\varphi)$ with equality if and only if both $C_1$ and $C_2$ touch each side of $L$, and they touch each other at the midpoint of the bisector of $L$.
\end{Lemma}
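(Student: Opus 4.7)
I would set up spherical coordinates with apexes $\p = (0,0,1)$ and $-\p$, bisector at longitude $\beta = 0$, and sides at $\beta = \pm \varphi/2$; a point of $L$ then has the form $\x(t,\beta) = (\sin t \cos\beta, \sin t \sin\beta, \cos t)$ with $t \in [0,\pi]$ and $\beta \in [-\varphi/2, \varphi/2]$. A direct inner-product computation with the inward unit normals $(\pm \sin(\varphi/2), -\cos(\varphi/2), 0)$ of the two sides shows that the spherical distances from $\x(t,\beta)$ to the sides of $L$ equal $\arcsin(\sin t \sin(\varphi/2 \mp \beta))$, so the set of admissible centers of caps of radius $\rho$ in $L$ is
\[
L_\rho := \{\x(t,\beta) \in L : \sin t \sin(\varphi/2 - |\beta|) \geq \sin\rho\}.
\]
Formula~\eqref{eq:radius} applied to the isosceles spherical triangle with sides $\pi/2, \pi/2, \varphi$ yields the clean identity $\tan\rho(\varphi) = \sin(\varphi/2)$.

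The heart of the proof is the inclusion $L_\rho \subseteq C[\p_0, \tfrac{\pi}{2} - s_0]$, where $\p_0 := (1,0,0)$ is the midpoint of the bisector of $L$ and $s_0 := \arcsin(\sin\rho/\sin(\varphi/2))$. I would establish it from the elementary sine-subtraction identity
\[
\sin(\varphi/2)\cos\beta - \sin(\varphi/2 - |\beta|) = \cos(\varphi/2)\sin|\beta| \geq 0,
\]
multiplied through by $\sin t \geq 0$: combined with the defining inequality of $L_\rho$, this gives $\sin t \cos\beta \geq \sin\rho/\sin(\varphi/2) = \sin s_0$, which, since $\langle \x(t,\beta), \p_0\rangle = \sin t \cos\beta$, is precisely $l(\widehat{\x \p_0}) \leq \pi/2 - s_0$.

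Because the spherical diameter of any cap of angular radius $r \leq \pi/2$ equals $2r$, the inclusion forces any two points of $L_\rho$ to be at spherical distance at most $\pi - 2 s_0$. Applied to the two cap centers $\c_1, \c_2 \in L_\rho$ together with the nonoverlap condition $l(\widehat{\c_1 \c_2}) \geq 2\rho$, this yields $2\rho \leq \pi - 2s_0$, which rearranges via $\sin s_0 \leq \cos\rho$ into $\tan\rho \leq \sin(\varphi/2) = \tan\rho(\varphi)$, and hence $\rho \leq \rho(\varphi)$. For the equality case, $\rho = \rho(\varphi)$ forces $\c_1, \c_2$ to be diametrically opposite points on the boundary small circle of $C[\p_0, \pi/2 - s_0]$, while the sine identity above is an equality only for $\beta = 0$; these two facts together pin $\{\c_1, \c_2\}$ down to the vertices $(\cos\rho, 0, \pm\sin\rho)$ of $L_\rho$ on the bisector, which is exactly the configuration where each cap touches both sides of $L$ and the two caps touch each other at $\p_0$, the midpoint of the bisector.

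The main obstacle, as I see it, is simply guessing the right enclosing cap $C[\p_0, \pi/2 - s_0]$. Once it is identified, the key inclusion reduces to a one-line sine identity and the remaining steps are routine rearrangement; in particular, no symmetrization, Lagrangian analysis, or case-splitting over which sides of $L$ the caps touch is required, despite the apparent flexibility in placing $\c_1, \c_2$ asymmetrically inside $L$.
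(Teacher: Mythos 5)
Your proof is correct, and it takes a genuinely different route from the one in the paper. The paper fixes the two caps and minimizes over lunes containing them: it passes to the spherical convex hull $K$ of $C_1\cup C_2$, invokes Lassak's results on the width of spherical convex bodies to show that the unique lune of minimal thickness containing $K$ is the intersection $L_0$ of the two common supporting hemispheres (the midpoints of the sides of a minimal lune are endpoints of a double normal of $K$, and $K$ has essentially only two double normals), and then observes that the thickness of $L_0$ is strictly increasing in $l(\widehat{\c_1\c_2})$. You instead fix the lune and maximize over placements: you characterize the admissible locus $L_\rho$ of cap centers and enclose it in the single cap $C[\p_0,\frac{\pi}{2}-s_0]$, after which both the bound and the equality case drop out of the triangle inequality. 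Your argument is more elementary and self-contained --- it avoids the external machinery of \cite{Lassak} entirely and makes the equality analysis fully explicit --- at the price of a coordinate computation and of having to guess the enclosing cap, as you note. The computational ingredients all check out: $\tan\rho(\varphi)=\sin\frac{\varphi}{2}$ follows from (\ref{eq:radius}) with $A=B=\frac{\pi}{2}$, $C=\varphi$; the identity $\sin\frac{\varphi}{2}\cos\beta-\sin\left(\frac{\varphi}{2}-|\beta|\right)=\cos\frac{\varphi}{2}\sin|\beta|$ is exact and, since $\cos\frac{\varphi}{2}>0$, degenerates to equality only at $\beta=0$; and in the equality case the centers $(\cos\rho,0,\pm\sin\rho)$ are indeed at distance $\rho$ from both sides and $2\rho$ from each other. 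One small typo: the inward unit normals of the sides at $\beta=\pm\frac{\varphi}{2}$ are $\left(\sin\frac{\varphi}{2},\mp\cos\frac{\varphi}{2},0\right)$ rather than $\left(\pm\sin\frac{\varphi}{2},-\cos\frac{\varphi}{2},0\right)$; the distance formula you derive from them, and hence the definition of $L_\rho$, is nevertheless the correct one.
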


\begin{proof}
In the proof, for $i=1,2$ we denote the center of $C_i$ by $\c_i$, the great circle containing $\widehat{\c_1\c_2}$ by $G$, and the bisector of the spherical segment $\widehat{\c_1\c_2}$ by $G'$ (cf. Figure~\ref{fig:thm3proof1}). Furthermore, following \cite{Lassak}, we call the spherical distance of the midpoints of the two sides of the lune $L \subset \S^2$ the \emph{thickness of $L$}, and the minimum thickness of a lune containing a given spherically convex body $K \subset \S^2$ the \emph{thickness of $K$}, denoted by $\Delta(K)$.  Note that, according to this definition, the thickness of a lune is equal to its angle.

Since $C_1 \cup C_2$ is contained in a lune $L$ and it does not contain any vertex of $L$, $C_1 \cup C_2$ is contained in an open hemisphere. Thus, its spherical convex hull $K$ exists, and a lune contains $C_1 \cup C_2$ if and only if it contains $K$. Let $L_0$ denote the lune obtained as the intersection of the two common supporting hemispheres of $C_1$ and $C_2$. First, we show that among the lunes containing $K$, the only lune with thickness $\Delta(K)$ is $L_0$. Indeed, by \cite[Claim 2]{Lassak}, if $L'$ is a lune with $K \subseteq L'$ and $\Delta(L')=\Delta(K)$, then the midpoints of both sides of $L'$ belong to $K$. Since $K$ is smooth, this yields that the spherical segment connecting the midpoints of the sides of $L'$ is a \emph{double normal} of $K$; i.e. it is a chord of $K$ perpendicular to the boundary of $K$ at both endpoints. Furthermore, as $K$ is the spherical convex hull of two distinct congruent spherical caps, the double normals of $K$ are exactly the intersections of $K$ with $G$ or $G'$. An elementary computation shows that the thickness of the latter lune is strictly smaller than the thickness of the first one, showing that $K \subset L'$, $\Delta(L')=\Delta(K)$ implies $L'=L_0$.

To finish the proof we observe that, by an elementary computation, the thickness of $L_0$ is a strictly increasing function of the spherical distance of the centers of $C_1$ and $C_2$, and thus, it is minimal if and only if $C_1$ and $C_2$ touch each other.
\end{proof}

Now, we prove Part (iii) in Case 2. For any $1 \leq i < j \leq 3$, let $\varphi_{ij}$ denote the angle of the two lunes in the cell decomposition of $\S^2$, bounded by half great circles in $G_i$ and $G_j$. Note that $\varphi_{12}+\varphi_{13}+\varphi_{23}=\pi$. By Lemma~\ref{lem:thickness}, we have $\rho \leq \min \{ \rho(\varphi_{12}), \rho(\varphi_{13}), \rho(\varphi_{23}) \}$. Thus, using the monotonicity properties of the function $\rho(\varphi)$, it follows that $\rho \leq \rho\left( \frac{\pi}{3} \right) = \arcsin \frac{1}{\sqrt{5}}$ with equality if and only if $\varphi_{ij} = \frac{\pi}{3}$ for all $1 \leq i < j \leq 3$, and $G_4$ is the polar of the common points of $G_1, G_2$ and $G_3$.

\begin{figure}[h]
\begin{minipage}[t]{0.48\linewidth}
\centering
\includegraphics[width=0.8\textwidth]{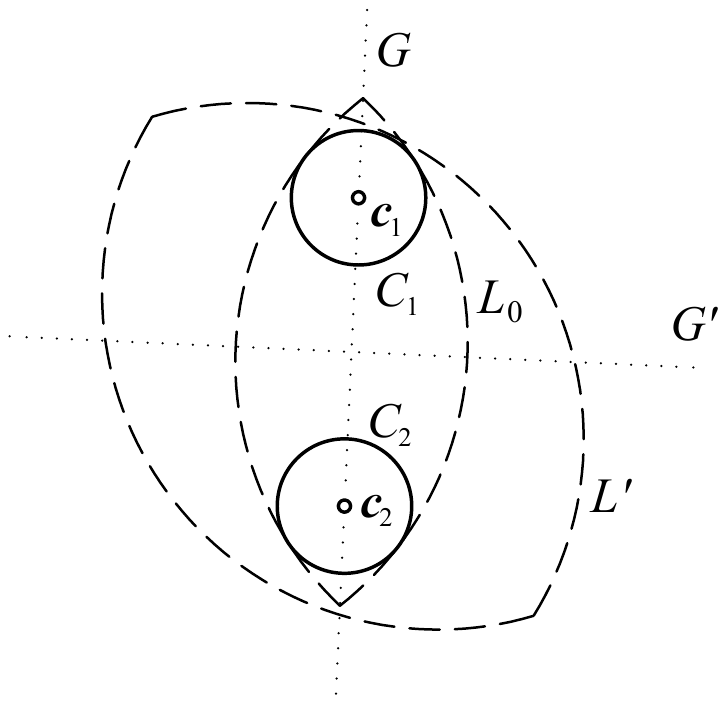}
\caption{An illustration for Lemma~\ref{lem:thickness}.}
\label{fig:thm3proof1}
\end{minipage}
\hglue1cm
\begin{minipage}[t]{0.48\linewidth}
\centering
\includegraphics[width=0.8\textwidth]{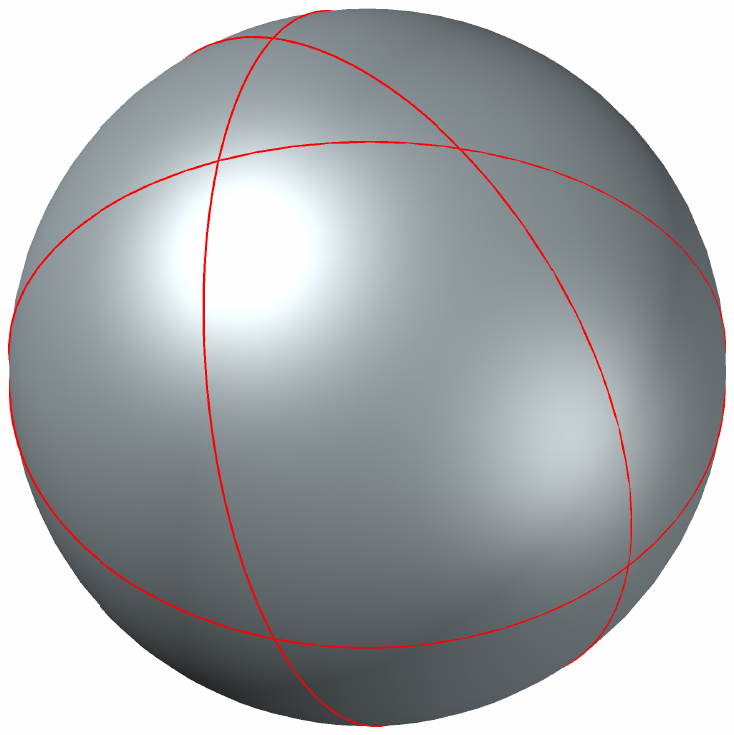}
\caption{An illustration for Case 3 in Section~\ref{subsec:part3}.}
\label{fig:thm3proof2}
\end{minipage}
\end{figure}

\emph{Case 3:} the family $\{ G_1, G_2, G_3, G_4 \}$ is nondegenerate, i.e., there is no point contained in at least three of the great circles (cf. Figure~\ref{fig:thm3proof2}).

In the following, let $r$ denote the minimal inradius of the cells of the arrangement.

An elementary computation shows that in the cell decomposition of $\S^2$ defined by this family, each great circle contains six sides, and there are exactly $14$ cells. On the other hand, by a result of Shannon (see \cite{Shannon}), this decomposition contains at least $8$ triangular cells. Let $Q_1, \ldots, Q_{14}$ denote the cells of the arrangement, and let $s_i$ denote the number of sides of $Q_i$. Without loss of generality, we assume that $s_1 = \ldots = s_8 = 3$. On the other hand, from the facts that every great circle contains $6$ sides of the arrangement and every side belongs to exactly two cells, we also have that $\frac{s_9+\ldots + s_{14}}{6} = 4$.

For all values of $i$, let $a_i$ denote the spherical area of the cell $Q_i$. By the spherical Dowker's theorem (\cite{FTF}, \cite{Mo}), we have that $a_i$ is greater than or equal to the area $a_r(s_i)$ of a regular $s_i$-gon circumscribed about a spherical cap of radius $r$. Furthermore, an elementary computation shows that $a_r(k)=2k\arccos\left( \cos r\sin \frac{\pi}{k}\right)-(k-2)\pi$, and that extending $a_r(k)$ to all  positive real values of $k$, we obtain a strictly convex function. By these observations and Jensen's inequality, we have
\begin{equation}\label{eq:fromDowker}
4 \pi = \sum_{i=1}^{14} a_i \geq 6 \left( 8 \arccos \left( \frac{1}{\sqrt{2}} \cos r\right)  - 2 \pi \right) + 8 \left( 6 \arccos \left( \frac{\sqrt{3}}{2} \cos r\right) - \pi \right).
\end{equation}

Solving (\ref{eq:fromDowker}) we obtain that $r \leq \arccos \frac{2}{\sqrt{5}} = \arcsin \frac{1}{\sqrt{5}}$. On the other hand, equality here occurs if and only if
$Q_1, \ldots, Q_8$ are regular triangles circumscribed about a spherical cap of radius $r$, and $Q_9, \ldots, Q_{14}$ are regular quadrangles circumscribed about a spherical cap of radius $r$. But then the sides of the triangle cells are strictly longer than the sides of the quadrangle cells, which contradicts the fact that in the cell decomposition defined by the four great circles, clearly there is a side which belongs to both a triangle and a quadrangle cell. This proves that $r < \arcsin \frac{1}{\sqrt{5}}$ for any arrangement of four great circles satisfying the condition in Case 3.

\subsection{Proof of Part (iv)}\label{subsec:part4}

Let $n>1$ pairwise distinct great circles dissect $\S^2$ into $N$ $2$-dimensional cells say, $Q_1, Q_2, \dots , Q_N$.
Assume that $\rho>0$ is chosen such that each $Q_i$ contains a spherical cap $S_i$ of angular radius $\rho$, where $1\leq i\leq N$. If $n\geq 5$, then Part (ii) implies that $\rho<\frac{\pi}{4}$ and therefore, Theorem~\ref{main2} applies to the TS-packing of the spherical caps $\F_N = \{ S_1, S_2, \ldots, S_N \}$ in $\S^2$. {As $2n\leq N$, therefore Theorem~\ref{main2} yields
\begin{equation}\label{5-4}
\frac{(2n)2\pi(1-\cos\rho)}{4\pi}\leq \delta(\F_N)\leq \delta(\rho) =\frac{1-\cos\rho}{1-\frac{\pi}{4\arcsin\left(\frac{1}{\sqrt{2}\cos\rho}\right)}}.  
\end{equation}
Finally, (\ref{5-4}) via a straightforward computation implies $\rho\leq \arccos\left(\frac{1}{\sqrt{2}\sin\left(\frac{n}{n-1}\frac{\pi}{4}\right)}\right)$, finishing the proof of Part (iv) of Theorem~\ref{main3}.

\section{Proof of Theorem~\ref{main4}}

\subsection{Proof of Part (i)}

Part (i) of Theorem~\ref{main4} is a special case of Part (i) of Theorem~\ref{thm:highdim} with a proof presented in Section~\ref{sec:highdim}.

\subsection{Proof of Part (ii)}

Apart from the equality case, our proof is a shortened version of the proof in \cite{Va}. Consider a family $\mathcal{F} = \{ G_1, G_2, G_3, G_4 \}$ of great circles. Depending on the position of the elements of $\mathcal{F}$, we distinguish three cases as in Subsection~\ref{subsec:part3}.

\emph{Case 1:} all of $G_1, \ldots, G_4$ meet at the same pair of antipodes.

Then, clearly, the circumradius of every cell is $\frac{\pi}{2}$.

\emph{Case 2:} three of the great circles, say $G_1, G_2, G_3$, meet at the same pair of antipodes, and $G_4$ does not.

Then $G_1, G_2$ and $G_3$ dissect $\S^2$ into six lunes, each of which is further dissected into two spherical triangles by $G_4$.
Consider any of the six lunes. Observe that if a half great circle arc is covered by the union of two spherical caps of radius $\rho$, then $\rho \geq \frac{\pi}{4}$. This implies that if a lune is covered by two spherical caps of radius $\rho$, then $\rho > \frac{\pi}{4}$. Thus, the maximum circumradius of the cells of the spherical mosaic generated by $\mathcal{F}$ is strictly greater than $\frac{\pi}{4}$.

\emph{Case 3:} the family $\mathcal{F}$ is nondegenerate, i.e., there is no point contained in at least three of the great circles (cf. Figure~\ref{fig:thm3proof2}).

Let $\rho_{\mathcal{F}}$ denote the maximum circumradius of all cells generated by $\mathcal{F}$. Since any two great circles in $\mathcal{F}$ intersect in two antipodal points, and any great great circle in $\mathcal{F}$ is decomposed into six circle arcs by these intersection points, the spherical mosaic $\mathcal{M}$ generated by $\mathcal{F}$ has $12$ vertices and $24$ edges. This implies, by Euler's theorem, that $\mathcal{M}$ has $14$ $2$-dimensional cells. Furthermore, since $\mathcal{F}$ contains four great circles, every cell in $\mathcal{M}$ is a triangle or a quadrangle. Every edge in $\mathcal{M}$ belongs to exactly two cells, yielding that $\mathcal{M}$ contains $8$ triangles and $6$ quadrangles.
For any $1 \leq i , j \leq 4$ and $i \neq j$, let $\p_{ij}$ and $-\p_{ij}$ denote the intersection points of $G_i$ and $G_j$.
Since every convex quadrangle has two diagonals, a simple counting shows that for any $\{ i,j,s,t \} = \{ 1,2,3,4\}$, the arcs $\widehat{\p_{ij}\p_{st}}, \widehat{\p_{st}(-\p_{ij})}, \widehat{(-\p_{ij})(-\p_{st})}$ and $\widehat{(-\p_{st} p_{ij})}$ are diagonals of quadrangle cells in $\mathcal{M}$, and every diagonal of a quadrangle cell can be obtained in this way.

Consider a diagonal of a quadrangle cell, say $\widehat{\p_{12}\p_{34}}$. Then the great circle $G$ containing this arc does not belong to $\mathcal{F}$, and hence, it contains exactly four vertices, namely $\p_{12}, \p_{34}, -\p_{12}, -\p_{34}$. As the arcs $\widehat{\p_{12}\p_{34}}, \widehat{\p_{34}(-\p_{12})}, \widehat{(-\p_{12})(-\p_{34})}$ and $\widehat{(-\p_{34})\p_{12}}$ are diagonals of quadrangle cells of $\mathcal{M}$, their lengths are at most $2\rho_{\mathcal{F}}$. Since their total length is $2\pi$, it readily follows that $\rho_{\mathcal{F}} \geq \frac{\pi}{4}$, implying the required inequality.

Now, we characterize the equality case. By the argument above, if $\rho_{\mathcal{F}} = \frac{\pi}{4}$, then every diagonal of every quadrangle in $\mathcal{M}$ is of length $\frac{\pi}{2}$, and the circumcenter of every quadrangle cell is the intersection point of its two diagonals, which coincides with the midpoints of the diagonals. Applying this observation for the diagonal $\widehat{\p_{12}\p_{34}}$, we obtain that the distance of the centers of the cells with diagonals $\widehat{\p_{12}\p_{34}}, \widehat{\p_{34}(-\p_{12})}$ is $\frac{\pi}{2}$, and $\p_{34}$ is the midpoint of the spherical segment connecting them. Since this observation can be applied for any diagonal of a quadrangle face, it follows that the centers of the six quadrangle faces are the vertices of a regular octahedron, and the vertices of $\mathcal{M}$ are the midpoints of the arcs connecting them. This yields the statement.

%It is easy to see that if $4$ {\it pairwise distinct} great circles dissect $\S^2$ into $N$ $2$-dimensional cells say, $Q_1, Q_2, \dots , Q_N$, then either $N=8$, or $N=12$, or $N=14$. Let $C_1,C_2, \dots , C_N$ be a family of spherical caps of angular radius $\rho_N$ with $0<\rho_N\leq \frac{\pi}{2}$ such that each $Q_i$ is covered by $C_i$, where $1\leq i\leq N$. Now, recall that V\'as\'arhelyi \cite{Va}\footnote{{\color{red} Let $r_N$ denote the smallest positive real number for which there exists a TS-covering of $\S^2$ by $N$ spherical caps of angular radius $r_N$. It is proved in \cite{Va} that $r_2=r_4=r_6=r_{10}=\frac{\pi}{2}, r_8=\arctan\frac{\sqrt{2}}{2}\approx 54.74^{\circ}, r_{12}=\arctan\frac{\sqrt{3}}{2}\approx 49.11^{\circ}, r_{14}= \frac{\pi}{4}$, and $r_{16}=\arctan\sqrt{\frac{\sqrt{2}+2}{4}}\approx 47.28^{\circ}$.}} proved the inequalities $\rho_8\geq \arctan\frac{\sqrt{2}}{2}\approx 54.74^{\circ}, \rho_{12}\geq \arctan\frac{\sqrt{3}}{2}\approx 49.11^{\circ}$, and $\rho_{14}\geq \frac{\pi}{4}$, where in the latter case equality is attained if and only if the double normals of the $4$ great circles form the vertices of a cube inscribed $\S^2$. This completes the proof of Part (ii) of Theorem~\ref{main4}.   

\subsection{Proof of Part (iii)}

Without loss of generality we may assume that $n>1$ {\it pairwise distinct} great circles dissect $\S^2$ into $N$ $2$-dimensional cells say, $Q_1, Q_2, \dots , Q_N$.  Let the number of sides (resp., spherical area) of $Q_i$ be denoted by $s_i$ (resp., $a_i$) for $1\leq i\leq N$.

\begin{Lemma}\label{max-number-of-cells}
$N\leq n(n-1)+2=n^2-n+2.$
\end{Lemma}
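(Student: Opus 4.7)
The plan is to apply Euler's formula on the sphere, $V - E + F = 2$, to the cell complex induced by the arrangement and to bound $F = N$ by carefully counting vertex and edge incidences. To formalize the setup, I would take the vertices to be the points where at least two of the great circles meet, the edges to be the maximal arcs of the great circles cut off by the vertices, and the faces to be the $2$-dimensional cells $Q_1, \ldots, Q_N$. For each vertex $v$, let $k_v \geq 2$ denote the number of great circles passing through $v$; note that because every great circle through a point $\p$ also passes through its antipode $-\p$, the vertices come in antipodal pairs with equal multiplicities.

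Next I would set up two combinatorial identities. First, since any two distinct great circles meet in exactly two antipodal points, counting the pairs $(G_i, G_j)$ meeting at each vertex gives
\begin{equation*}
\sum_{v} \binom{k_v}{2} \;=\; 2 \binom{n}{2} \;=\; n(n-1).
\end{equation*}
Second, every vertex $v$ lies on exactly $k_v$ great circles, and on any single great circle $G_i$ the number of arcs equals the number of vertices lying on $G_i$. Summing over circles, $E = \sum_i (\text{vertices on } G_i) = \sum_v k_v$, and hence
\begin{equation*}
E - V \;=\; \sum_{v}(k_v - 1).
\end{equation*}

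The key inequality is then $k_v - 1 \leq \binom{k_v}{2}$ for every $k_v \geq 2$, which is immediate since $\binom{k_v}{2} = \tfrac{k_v(k_v-1)}{2} \geq k_v - 1$ whenever $k_v \geq 2$. Summing over vertices yields
\begin{equation*}
E - V \;=\; \sum_{v}(k_v - 1) \;\leq\; \sum_{v} \binom{k_v}{2} \;=\; n(n-1),
\end{equation*}
and so by Euler's formula
\begin{equation*}
N \;=\; F \;=\; 2 + E - V \;\leq\; n(n-1) + 2 \;=\; n^2 - n + 2,
\end{equation*}
which is the required bound. The step needing the most care is simply verifying that the $V, E, F$ counts are legitimate even in degenerate configurations (three or more circles through a common antipodal pair): this is handled by letting $k_v$ vary freely subject to $k_v \geq 2$, and observing that the extremal case $k_v = 2$ for all $v$ (general position) is precisely where equality is attained, giving the standard count $N = n^2 - n + 2$.
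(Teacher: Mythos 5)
Your proof is correct, but it takes a genuinely different route from the paper. The paper proves the lemma by induction on $n$: adding an $n$-th great circle to an arrangement of $n-1$ circles, the new circle is cut into at most $2(n-1)$ arcs, each of which can split at most one existing cell, so $N\leq N'+2(n-1)$, and the bound follows by summing. You instead invoke Euler's formula $V-E+F=2$ for the cell complex on $\S^2$ together with two double-counting identities, reducing everything to the pointwise inequality $k_v-1\leq\binom{k_v}{2}$. Both arguments are sound; your identities $\sum_v\binom{k_v}{2}=n(n-1)$ and $E-V=\sum_v(k_v-1)$ are correct, and the faces of a great-circle arrangement with $n\geq 2$ are open spherically convex cells contained in open hemispheres, hence topological disks, so Euler's formula legitimately applies. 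The inductive argument is more elementary (no Euler formula needed) and is the one the paper reuses, via central projection, for the higher-dimensional analogue in Lemma~\ref{upper-estimate-for-cell-numbers}; your argument buys a sharper statement essentially for free, namely the exact defect formula $N=n(n-1)+2-\sum_v\bigl(\binom{k_v}{2}-k_v+1\bigr)$ and the characterization of equality as the case $k_v=2$ for all $v$, i.e., general position. The only point deserving an explicit sentence in a polished write-up is the applicability of Euler's formula (connectedness of the union of the circles, which holds since any two great circles meet, and the fact that each cell is a disk), which you flag but do not fully justify.
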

\begin{proof}
The following proof is by induction on $n$. Clearly, the statement holds for $n=2$. So, we assume that it holds for any tiling of $\S^2$ generated by at most $n-1$ ($\geq 2$) great circles. Next, let $G_1, \dots, G_{n-1}, G_{n}$ be pairwise distinct great circles in $\S^2$ that dissect $\S^2$ into $N$ $2$-dimensional cells. By induction, if $G_1, \dots, G_{n-1}$ dissect $\S^2$ into $N'$ $2$-dimensional cells, then 
\begin{equation}\label{5-1}
N'\leq (n-1)(n-2)+2=n^2-3n+4 .
\end{equation}
Next, observe that 
\begin{equation}\label{5-2}
N\leq N'+N'',
\end{equation}
where $N''$ denotes the number of $2$-dimensional cells of the tiling generated by  $G_1, \dots, G_{n-1}$ that are bisected by $G_n$. Clearly, $N''$ is equal to the number of circular arcs into which
$G_n$ is dissected by $G_1, \dots, G_{n-1}$ and therefore
\begin{equation}\label{5-3}
N''\leq2(n-1)=2n-2.
\end{equation}
Thus, (\ref{5-1}), (\ref{5-2}), and (\ref{5-3}) imply that $N\leq (n^2-3n+4)+(2n-2)=n^2-n+2$, which completes the proof of Lemma~\ref{max-number-of-cells}. 
\end{proof}

\begin{Lemma}\label{average side number}
$$2\leq\frac{\sum_{i=1}^Ns_i}{N}\leq 4.$$
\end{Lemma}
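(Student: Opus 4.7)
The strategy is a standard Euler-characteristic argument applied to the planar graph (on $\S^2$) induced by the great circle arrangement. Let $V$ and $E$ denote the numbers of vertices and edges of this graph, and note that every edge belongs to exactly two of the $N$ cells, so $\sum_{i=1}^N s_i = 2E$.

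\textbf{Lower bound.} First I would observe that in any tiling generated by $n\geq 2$ pairwise distinct great circles, every cell $Q_i$ has at least two sides. Indeed, a cell bounded by a single arc of a great circle would have to be a full hemisphere, which is impossible once at least two distinct great circles are present (they must intersect and so produce at least two vertices on the boundary of every cell). Consequently $\sum_i s_i \geq 2N$, giving the lower bound $\sum_i s_i / N \geq 2$.

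\textbf{Upper bound.} Euler's formula for the cell decomposition of $\S^2$ reads
\[
V - E + N = 2.
\]
Every vertex of the tiling is a point at which at least two of the great circles in the arrangement cross. Since each great circle passes straight through such a vertex, it contributes $2$ edges locally, so $\deg(v)\geq 2\cdot 2 = 4$ for every vertex $v$. The handshake identity then gives
\[
2E \;=\; \sum_{v} \deg(v) \;\geq\; 4V,
\]
i.e., $V \leq E/2$. Substituting this into Euler's formula yields $E \leq 2N - 4$, hence
\[
\sum_{i=1}^N s_i \;=\; 2E \;\leq\; 4N - 8 \;\leq\; 4N,
\]
which is the upper bound $\sum_i s_i / N \leq 4$.

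\textbf{Main obstacle.} There is no serious obstacle; both inequalities are elementary consequences of Euler's formula together with the observation that vertices of great-circle arrangements automatically have even degree at least $4$. The only point requiring a moment of care is verifying the degree-$4$ lower bound at vertices (which uses that each great circle through a vertex contributes two edges there), and ruling out the possibility of a one-sided cell in the lower-bound step (which uses the fact that at least two great circles are present, so no cell is a full hemisphere).
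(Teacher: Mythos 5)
Your proof is correct, but it follows a genuinely different route from the paper's. The paper proves the upper bound by induction on $n$: when an $(n+1)$-st great circle is added, it bisects some $M$ of the existing cells, and the key observation is that splitting one cell into two increases the total side count by at most $4$ (the new dividing arc is a side of both new pieces, and its two endpoints can each split one old side in two), so $\sum_j (s_j'+s_j'') \leq 4M + \sum_j s_j$ and the bound propagates. You instead give a global count: $\sum_i s_i = 2E$, every vertex of a great-circle arrangement has even degree at least $4$ (each of the at least two circles through it contributes two edge-ends), so $2E \geq 4V$, and combining with Euler's formula $V - E + N = 2$ gives $E \leq 2N-4$, hence $\sum_i s_i \leq 4N - 8$. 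Your argument actually yields the slightly sharper bound $\frac{1}{N}\sum_i s_i \leq 4 - \frac{8}{N}$, at the modest cost of having to justify that Euler's formula applies — i.e., that the union of the great circles is connected (true, since any two great circles intersect) and that every cell is a topological disk (true, since each cell is a spherically convex polygon or lune once $n \geq 2$). The paper's induction sidesteps these topological preliminaries but only delivers the weaker $\leq 4N$. Both treatments of the lower bound are essentially the same triviality: no cell of an arrangement of at least two great circles can be a hemisphere, so every cell has at least two sides.
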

\begin{proof}
As the lower bound clearly holds, we are left with proving the upper bound. Using induction on $n$, we assume that the upper bound holds for any tiling of $\S^2$ induced by $n\geq 2$ pairwise distinct great circles. Next, let us take $n+1$ pairwise distinct great circles say, $G_1, G_2, \dots, G_n, G_{n+1}$ and assume that $G_1, G_2, \dots , G_n$ dissect $\S^2$ into the cells $Q_1, Q_2, \dots , Q_N$ with $Q_i$ having $s_i$ sides for $1\leq i\leq N$. The inductive assumption implies that
\begin{equation}\label{inductive-assumption}
\sum_{i=1}^Ns_i\leq 4N
\end{equation}
Without loss of generality we may assume that $G_{n+1}$ dissects each of the cells $Q_1, Q_2,\dots , Q_M$ into two cells and it is disjoint from the interiors of the remaining cells $Q_{M+1}, \dots , Q_N$. In particular, assume that the side numbers of the two cells into which $G_{n+1}$ dissects the cell $Q_j$ are labeled by $s_j'$ and $s_j''$ for $1\leq j\leq M$. Clearly, the total number of sides of the cells of the tiling generated by the great circles $G_1, G_2, \dots, G_n, G_{n+1}$ is equal to $ \sum_{j=1}^{M} (s_j'+s_j'')+\sum_{i=M+1}^Ns_i $ and our goal is to show that
\begin{equation}\label{induction-1}  
\sum_{j=1}^{M} (s_j'+s_j'')+\sum_{i=M+1}^Ns_i \leq 4(N+M).
\end{equation}
Finally, (\ref{induction-1}) follows from (\ref{inductive-assumption}) and the obvious observation that $\sum_{j=1}^{M} (s_j'+s_j'')\leq 4M+\sum_{j=1}^{M} s_j$ holds. This completes the proof of Lemma~\ref{average side number}.
\end{proof}

\begin{Lemma}\label{Dowker-covering}
Assume that each $Q_i$ is covered by a spherical cap of angular radius $0<R<\frac{\pi}{2}$, where $1\leq i\leq N$. Then
$$\frac{4\pi}{8\arctan\left(\frac{1}{\cos R}\right)-2\pi}\leq N.$$
\end{Lemma}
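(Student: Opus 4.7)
My plan is to bound each cell's area from above by the area of a regular spherical $s_i$-gon inscribed in a cap of radius $R$, evaluate that area explicitly for $s=4$, and aggregate by Jensen's inequality using the bound $\bar s\leq 4$ from Lemma~\ref{average side number}. For Step~1, observe that every cell $Q_i$ is a spherical $s_i$-gon contained in a cap of angular radius $R<\tfrac{\pi}{2}$, so by the inscribed version of the spherical Dowker theorem (dual to the circumscribed version of \cite{FTF, Mo} used in the proof of Part~(iii) of Theorem~\ref{main3}), we have $a_i\leq A_R(s_i)$, where $A_R(k)$ denotes the area of the regular spherical $k$-gon inscribed in a cap of angular radius $R$. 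For Step~2, decompose such a regular $k$-gon into $k$ congruent isosceles spherical triangles with apex at the cap's center, legs of length $R$, and apex angle $\tfrac{2\pi}{k}$; the dual spherical law of cosines for the base angle $\gamma_k$ gives $\cot\gamma_k=\tan(\tfrac{\pi}{k})\cos R$, i.e.\ $\gamma_k=\arctan\!\bigl(\cot(\tfrac{\pi}{k})/\cos R\bigr)$, and the spherical excess formula yields $A_R(k)=2k\gamma_k-(k-2)\pi$. In particular $A_R(4)=8\arctan(1/\cos R)-2\pi$, precisely the quantity appearing in the claim.

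For Step~3, extend $A_R(\cdot)$ to a function of the real parameter $k\geq 2$ and verify (in analogy with, but dually to, the strict convexity of $a_r(k)$ exploited in the proof of Part~(iii) of Theorem~\ref{main3}) that $A_R$ is strictly increasing and strictly concave in $k$. Writing $\bar s:=\tfrac{1}{N}\sum_{i=1}^N s_i$, Lemma~\ref{average side number} gives $\bar s\leq 4$, so Jensen's inequality combined with the monotonicity of $A_R$ yields
\[
4\pi \;=\; \sum_{i=1}^N a_i \;\leq\; \sum_{i=1}^N A_R(s_i) \;\leq\; N\,A_R(\bar s) \;\leq\; N\,A_R(4) \;=\; N\bigl(8\arctan(1/\cos R)-2\pi\bigr),
\]
and rearranging yields $N\geq 4\pi/\bigl(8\arctan(1/\cos R)-2\pi\bigr)$, as required. (Note $8\arctan(1/\cos R)-2\pi>0$ since $1/\cos R>1$, so the bound is meaningful.)

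The main obstacle will be the analytic verification in Step~3 that the continuous extension of $A_R(k)$ is strictly concave on $[2,\infty)$; this reduces to a second-derivative estimate which is routine but not entirely short, and is the genuine work beyond the Dowker input and the computation of $A_R(4)$. A minor additional point is that the inscribed form of the spherical Dowker theorem used in Step~1 may need to be deduced from the circumscribed form by a standard polarity argument on $\mathbb{S}^2$ (or by a Dowker-style pairwise modification on vertex positions), but this is a fully classical ingredient.
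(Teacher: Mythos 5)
Your proposal is correct and follows essentially the same route as the paper: bound $a_i\leq A_R(s_i)$ by the extremal property of the regular inscribed polygon, use the concavity of $A_R$ together with Jensen's inequality and the bound $\bar s\leq 4$ from Lemma~\ref{average side number}, and evaluate $A_R(4)=8\arctan(1/\cos R)-2\pi$. The only cosmetic difference is that the paper cites the spherical Dowker theorem for the discrete concavity of the sequence $A_R(2),A_R(3),\dots$ and takes an increasing concave extension, whereas you propose to verify concavity of the explicit analytic formula directly; either suffices.
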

\begin{proof}
Let $A_R(s)$ be the spherical area of an $s$-sided regular spherical convex polygon which is inscribed a spherical cap of angular radius $0<R<\frac{\pi}{2}$. It is well known (\cite{FTF}) that
\begin{equation}\label{inscribed-regular}
a_i\leq A_R(s_i)
\end{equation}
holds for all $1\leq i\leq N$.
Next, let us recall the so-called spherical Dowker theorem (\cite{FTF}, \cite{Mo}) according to which the sequence $A_R(2), A_R(3), A_R(4), \dots$ is concave: $A_R(s-1)+A_R(s+1)\leq 2A_R(s)$, where $s=3,4,\dots$.
Thus, by extending $A_R(s)$ to all real values of $s\geq 2$ such that we obtain an increasing concave function, we get by (\ref{inscribed-regular}), Lemmas~\ref{average side number}, and Jensen's inequality that
\begin{equation}\label{lower estimate for total cell area}
4\pi=\sum_{i=1}^N a_i\leq \sum_{i=1}^NA_R(s_i)\leq NA_R\left(\frac{\sum_{i=1}^Ns_i}{N}\right)\leq N A_R(4)=N\left(8\arctan \left(\frac{1}{\cos R}\right)-2\pi \right),
\end{equation}
finishing the proof of Lemma~\ref{Dowker-covering}. \end{proof}
Hence, Lemmas~\ref{max-number-of-cells} and ~\ref{Dowker-covering} imply via a simple computation that $\arccos\left(\frac{1}{\tan\left(\left(1+\frac{2}{n^2-n+2}\right)\frac{\pi}{4}\right)  }\right)\leq R$ holds for all $n>2$. This completes the proof of Part (iii) of Theorem~\ref{main4}.

\subsection{Proof of Part (iv)}

Part (iv) of Theorem~\ref{main4} is a special case of Part (iii) of Theorem~\ref{thm:highdim} with a proof presented in Section~\ref{sec:highdim}.

\section{Proof of Theorem~\ref{thinnest-TS-covering}}

Let $\S^2$ be dissected by $n$ great circles into $N$ $2$-dimensional cells say, $Q_1, Q_2, \dots , Q_N$ such that each each $Q_i$ is covered by a spherical cap $C_i$ of $\mathcal{C}$ having angular radius $\rho$, where $C_i\neq C_j$ for any $1\leq i<j\leq N$. Then Lemma~\ref{Dowker-covering}
implies in a straightforward way that

$$\Delta({\rho})=\frac{\pi(1-\cos\rho)}{4\arctan\left(\frac{1}{\cos\rho}\right)-\pi}=\frac{2\pi(1-\cos\rho)}{8\arctan\left(\frac{1}{\cos\rho}\right)-2\pi}\leq\frac{N 2\pi(1-\cos \rho)}{4 \pi} = \frac{1-\cos \rho}{2} N\leq\delta(\mathcal{C}) ,$$
finishing the proof of Theorem~\ref{thinnest-TS-covering}.

\section{Proof of Theorem~\ref{thm:highdim}}\label{sec:highdim}

\subsection{Proof of Part (i)}

In the proof we need the fact that among spherical simplices contained in a given spherical ball, the regular ones inscribed in the ball have maximal volume. This fact is the main result of \cite{boroczky}. Nevertheless, by Lemma~\ref{lem:density}, we give a new and short proof of this statement.

\begin{Lemma}\label{lem:density}
Let $\mu$ be a rotationally symmetric, strictly decreasing, nonnegative function defined on a closed ball $B \subset \E^d$ centered at the origin, and let $S_{reg}$ be a regular simplex inscribed in $B$. Then, for any simplex $S \subset B$, we have $\int_S \mu(\p) \, d \p \leq \int_{S_{reg}} \mu(\p) \, d\p$, with equality if and only if $S$ is a regular simplex inscribed in $B$.
\end{Lemma}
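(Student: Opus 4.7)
The plan is to prove the lemma by a two-step variational argument on the compact set of simplices $S \subset B$. By continuity of $I(S) := \int_S \mu(\p)\,d\p$ and compactness of the parameter space (the space of simplices contained in $B$ is a closed, bounded subset of $(\E^d)^{d+1}$), a maximizer $S^*$ exists. I would then proceed in two steps to identify $S^*$ as a regular inscribed simplex.

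First I would show that every vertex of $S^*$ lies on $\partial B$. Suppose for contradiction that a vertex $\mathbf{v}_0$ lies in $\inter B$. Let $F_0$ be the facet of $S^*$ opposite to $\mathbf{v}_0$, let $\mathbf{c}$ be its centroid, and for $t \geq 0$ set $\mathbf{v}_0(t) := \mathbf{v}_0 + t(\mathbf{v}_0 - \mathbf{c})$ and $S(t) := \conv\{\mathbf{v}_0(t), \mathbf{v}_1, \ldots, \mathbf{v}_d\}$. A short barycentric computation (expressing any $\p = \sum_i \lambda_i \mathbf{v}_i \in S^*$ with new coefficients $\mu_0 = \lambda_0/(1+t)$ and $\mu_i = \lambda_i + \lambda_0 t/(d(1+t))$ for $i\geq 1$) shows that $S^* \subset S(t)$ for every $t \geq 0$, with strict containment for $t > 0$. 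Since $\mathbf{v}_0 \in \inter B$, $\mathbf{v}_0(t) \in B$ for all sufficiently small $t > 0$, and by convexity of $B$ the enlarged simplex then lies in $B$. Strict monotonicity of $\mu$ together with $\mu \geq 0$ forces $\mu > 0$ on $\inter B$, so $I(S(t)) - I(S^*) = \int_{S(t) \setminus S^*} \mu\,d\p > 0$, contradicting maximality of $S^*$.

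Second, among simplices with all vertices on $\partial B$, I would identify the maximizers via Lagrange multipliers attached to the constraints $|\mathbf{v}_i|^2 = R^2$. Using the standard-simplex parameterization
\[
I(S) = \int_{\Delta^d} \mu\!\left(\sum_j \lambda_j \mathbf{v}_j\right)\,\bigl|\det[\mathbf{v}_1 - \mathbf{v}_0, \ldots, \mathbf{v}_d - \mathbf{v}_0]\bigr|\,d\lambda,
\]
the first-order optimality condition becomes $\partial I/\partial \mathbf{v}_i = 2\alpha_i \mathbf{v}_i$ for scalars $\alpha_i$, i.e., every tangential variation of a single vertex along $\partial B$ produces no first-order change in $I$. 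Exploiting that $\nabla\mu(\p)$ is parallel to $\p$ (because $\mu$ is radial), I would rewrite the $d+1$ vector equations as conditions on moments of the barycentric coordinates weighted by the radial field, and combine them to force the Gram matrix of $(\mathbf{v}_0, \ldots, \mathbf{v}_d)$ to be a scalar multiple of $I_{d+1} - \tfrac{1}{d+1}J$, which is the defining Gram matrix of a regular simplex centered at $\o$.

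The main obstacle is step two: the Lagrange-multiplier system is nonlinear, and ruling out non-regular critical configurations is the delicate part. If the direct combinatorial manipulation of the first-order equations gets stuck, my backup plan is induction on $d$: integrating $\mu$ along the direction normal to a facet produces a strictly decreasing, rotationally symmetric weight on that facet's hyperplane (centered at the foot of the perpendicular from $\o$), and applying the inductive hypothesis to all $d+1$ facets together with the inscribed constraint on $\partial B$ should pin the configuration down. Uniqueness up to $O(d)$-rotations is automatic from $O(d)$-invariance of $\mu$, and this yields the equality case.
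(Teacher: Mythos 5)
Your Step 1 (pushing an interior vertex outward along the ray from the centroid of the opposite facet, verifying $S^*\subset S(t)$ by the barycentric substitution, and using $\mu>0$ on $\inter B$) is correct and cleanly forces all vertices of a maximizer onto $\partial B$. The gap is Step 2, which is the entire content of the lemma. You write down the first-order conditions and then assert that ``combining them'' forces the Gram matrix of the vertices to be that of a regular simplex, while simultaneously conceding that ruling out non-regular critical configurations is the delicate part. Nothing in the proposal actually performs this combination: the stationarity conditions couple a determinant term proportional to the normal of the facet opposite $\mathbf{v}_i$ with the moments $\int_S \beta_i(\p)\,\mu'(\|\p\|)\,\|\p\|^{-1}\p\, d\p$ of the barycentric coordinates against the radial field, and there is no visible route from these $d+1$ nonlinear vector equations to regularity for a general radial weight $\mu$. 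The backup plan also fails as described: if you integrate $\mu$ over $S$ along the direction normal to a facet $F$, the induced weight on the hyperplane of $F$ is $w(\x)=\int \mathbf{1}_S(\x+t\mathbf{n})\,\mu(\x+t\mathbf{n})\, dt$, and the indicator of the pyramid destroys rotational symmetry about the foot of the perpendicular from $\o$ (the cross-sections shrink toward the apex in a direction-dependent way), so the inductive hypothesis cannot be applied to $w$.

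The paper's proof is an induction on $d$ that uses the facet differently: it slices the pyramid $S$ over a facet $F$ by the hyperplanes $H_t$ parallel to $F$, notes that $\mu|_{H_t}$ is rotationally symmetric and strictly decreasing about the point $\c_t$ where the perpendicular from $\o$ meets $H_t$, and replaces $S$ by $S'=\conv(F'\cup\{\c_h\})$, where $F'$ is the regular $(d-1)$-simplex centered at $\c_0$ with the same circumradius as $F$ and $\c_h$ lies on that perpendicular at the height of $S$. Each slice $S'\cap H_t$ is then a regular simplex centered at $\c_t$ with the same circumradius as $S\cap H_t$, so the $(d-1)$-dimensional case applied slice by slice, followed by Fubini, gives $\int_S\mu\leq\int_{S'}\mu$; the equality analysis, applied to every facet of a maximizer, then forces regularity. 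To rescue your outline, replace both your primary and backup versions of Step 2 by this slicing-and-symmetrization argument.
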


\begin{proof}
We prove Lemma~\ref{lem:density} by induction on $d$. For $d=1$, the statement is trivial. Assume that it holds in $\E^{d-1}$.
Let $F$ be a facet of $S$ and let $H=H_0$ be the hyperplane through $F$. For any $t \in \Re$, let $H_t$ be the hyperplane parallel to $H$, at the signed distance $t$ from $H$ such that $H_t$ for $t > 0$ lies on the same side of $H$ as $S$. Let $L$ be the line through the origin $\o$ perpendicular to $H$, and let $\c_t$ denote the intersection point of $H_t$ and $L$. Finally, let $h>0$ be the height of $S$ with respect to the facet containing $H$. Then $S$ is a pyramid with base $F=H \cap S$.

\begin{figure}[ht]
\begin{center}
\includegraphics[width=0.65\textwidth]{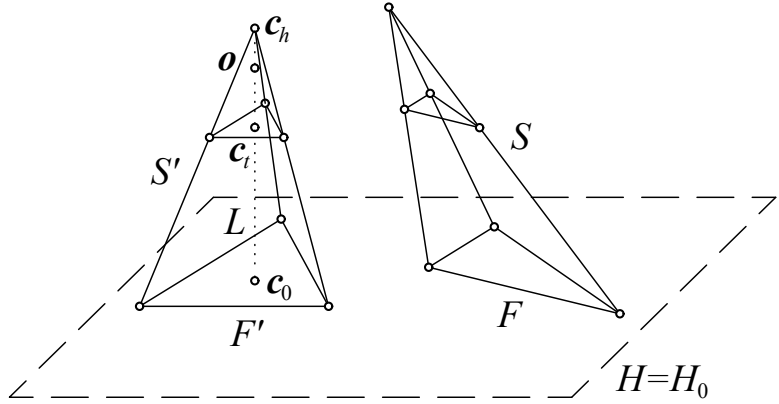}
\caption{An illustration for Lemma~\ref{lem:density}.}
\label{fig:thm5}
\end{center}
\end{figure}

Note that for any $t$, $\left. \mu \right|_{H_t}$ is a nonnegative, strictly decreasing function, which is rotationally symmetric about $\c_t$.
Let $F'$ be a regular $(d-1)$-dimensional simplex in $H_0$, centered at $\c_0$, such that the circumradii of $F'$ and $F$ are equal (cf. Figure~\ref{fig:thm5}). Let $S'=\conv (F' \cup \{ \c_h\})$. Clearly, for every value of $t$, the circumradius of $S' \cap H_t$ is equal to the circumradius of $S \cap H_t$.
Furthermore, by the induction hypothesis, for all values of $t$, we have
\begin{equation}
\int_{H_t \cap S} \mu(\p) \, d \p \leq \int_{H_t \cap S'} \mu(\p) \, d \p,
\end{equation}
from which Fubini's theorem yields $\int_{S} \mu(\p) \, d \p \leq \int_{S'} \mu(\p) \, d \p$.

On the other hand, equality implies that $F$ is a regular $(d-1)$-dimensional simplex centered at $\c_0$, and its apex is $\c_h$.
If $\int_{S} \mu(\p) \, d \p$ is maximal, all facets of $S$ satisfy this property, implying that $S$ is a regular simplex inscribed in $B$.
\end{proof}

\begin{Corollary}\label{Boroczky}
Among all spherical simplices contained in a spherical cap of angular radius $<\frac{\pi}{2}$ of $\S^{d-1}$ the regular inscribed simplices have the maximum volume.
\end{Corollary}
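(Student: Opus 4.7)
The plan is to reduce the Corollary to Lemma~\ref{lem:density} via the gnomonic (central) projection. Let $C\subset\S^{d-1}$ be a spherical cap of angular radius $r<\frac{\pi}{2}$ centered at a point $\p$, and let $H$ denote the tangent hyperplane to $\S^{d-1}$ at $\p$, which we identify with $\E^{d-1}$ by placing $\p$ at the origin. The gnomonic projection $\pi\colon C\to H$ sending $\q\in C$ to the intersection of the ray $\o\q$ with $H$ is a bijection onto the closed Euclidean ball $B\subset H$ of radius $\tan r$ centered at $\p$. Since $\pi$ sends every great subsphere of $\S^{d-1}$ passing through $C$ (the intersection of $\S^{d-1}$ with a linear subspace of $\E^d$) to the intersection of $B$ with an affine subspace of $H$, it sets up a bijective correspondence between spherical simplices contained in $C$ and Euclidean simplices contained in $B$.

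The next step is to identify the pullback of spherical volume under $\pi^{-1}$. A direct Gram-matrix computation applied to the parametrization $v\mapsto(\p+v)/\sqrt{1+\|v\|^2}$ gives
\[
\Svol(\pi^{-1}(A))\;=\;\int_{A}\mu(v)\,dv,\qquad \mu(v):=(1+\|v\|^2)^{-d/2},
\]
for every measurable $A\subseteq B$. The crucial point is that $\mu$ is rotationally symmetric about $\p$ and strictly decreasing in $\|v\|$, exactly the hypothesis of Lemma~\ref{lem:density} (applied to the ball $B\subset\E^{d-1}$).

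Applying Lemma~\ref{lem:density} to $\pi(S)\subseteq B$ for an arbitrary spherical simplex $S\subseteq C$ yields $\Svol(S)\le\int_{T}\mu(v)\,dv$ for any regular Euclidean simplex $T$ inscribed in $B$, with equality iff $\pi(S)=T$. To conclude, I would verify that $\pi^{-1}(T)$ is a regular spherical simplex inscribed in $C$: the vertices of $T$ lie on $\partial B$ at equal Euclidean distance from $\p$, so their preimages lie on $\partial C$ at equal spherical distance from $\p$; moreover, the symmetry group of $T$ consists of orthogonal transformations of $H$ fixing $\p$, each of which extends uniquely to an orthogonal transformation of $\E^d$ fixing $\p$, and these extensions act isometrically on $\S^{d-1}$ and exhibit $\pi^{-1}(T)$ as vertex-transitive.

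The whole argument is essentially routine once the gnomonic projection has been chosen as the right change of variables; the only mildly technical step is the Jacobian calculation establishing the formula for $\mu$, and the only conceptual point to check is that ``regular inscribed'' is preserved under the correspondence $S\leftrightarrow\pi(S)$, which follows from the symmetry argument above.
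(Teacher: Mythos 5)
Your proposal is correct and follows essentially the same route as the paper: central (gnomonic) projection of the cap onto the tangent hyperplane, observing that the induced volume element is a rotationally symmetric strictly decreasing density, and then invoking Lemma~\ref{lem:density}. Your version merely adds the explicit Jacobian $\mu(v)=(1+\|v\|^2)^{-d/2}$ and the check that regular inscribed simplices correspond under the projection, both of which the paper leaves implicit.
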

\begin{proof}
Consider the spherical cap $C$ in $\S^{d-1}$, centered at a point $\c$, and let $H$ denote the tangent hyperplane of $\S^{d-1}$ at $\c$. Let $\proj: C \to H$ denote the central projection with center $\o$ onto $H$. Then $\proj(C)$ is a Euclidean ball in $H$ centered at $\c$, and the volume element induced by the projection is a strictly decreasing function depending only on the distance from $\c$. Thus, the fact that among spherical simplices contained in $C$, the regular ones inscribed in $C$ have maximal volume follows from Lemma~\ref{lem:density}.
\end{proof}

Finally, we prove Part (i) of Theorem~\ref{thm:highdim}. Consider a family $\mathcal{F}_d$ of $d$ $(d-2)$-dimensional great spheres in $\S^{d-1}$. Observe that if there is a pair of antipodes contained in each great sphere of $\mathcal{F}_d$, then the circumradius of each cell is $\frac{\pi}{2}$. Hence, we may assume that the great spheres of $\mathcal{F}_d$ are in nondegenerate position, implying that they decompose $\S^{d-1}$ into $2^d$ spherical $(d-1)$-dimensional simplices. Notice that, if the great spheres of $\mathcal{F}_d$ are mutually orthogonal, then all $(d-1)$-dimensional cells are regular simplices of edge length $\frac{\pi}{2}$ with circumradius $\arccos\frac{1}{\sqrt{d}}$. Thus, the largest circumradius of the $2^d$ $(d-1)$-dimensional simplices into which the great spheres of $\mathcal{F}_d$ dissect $\S^{d-1}$ cannot be smaller than $\arccos\frac{1}{\sqrt{d}}$. Namely, if it is, then Corollary~\ref{Boroczky} would imply that the total volume of the $2^d$ simplices were less than the surface volume of $\S^{d-1}$, a contradiction. Finally, notice that using Corollary~\ref{Boroczky}
once again we get that the largest circumradius of the $2^d$ $(d-1)$-dimensional simplices into which the great spheres of $\mathcal{F}_d$ dissect $\S^{d-1}$ can be equal to $\arccos\frac{1}{\sqrt{d}}$ only if each of the  $2^d$ spherical $(d-1)$-dimensional simplices is a regular one of edge length $\frac{\pi}{2}$, finishing the proof of Part (i) of Theorem~\ref{thm:highdim}.

\subsection{Proof of Part (ii)}

\begin{Lemma}\label{upper-estimate-for-cell-numbers}
Let $n\geq d$ pairwise distinct $(d-2)$-dimensional great spheres of $\S^{d-1}$ ($d\geq 2$)  dissect $\S^{d-1}$ into $N$ $(d-1)$-dimensional cells. Then 
\begin{equation}\label{upper-bound-for-N}
N\leq 2\left(\binom{n-1}{0}+\binom{n-1}{1}+\dots +\binom{n-1}{d-1}\right)
\end{equation}
\end{Lemma}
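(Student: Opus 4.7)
My plan is to prove the bound by induction on $n$, uniformly in the ambient dimension $d\geq 2$, with the inductive step appealing to the same statement one dimension lower. The base case $n=1$ is immediate: a single $(d-2)$-dimensional great sphere cuts $\S^{d-1}$ into two hemispheres, matching the bound $2\binom{0}{0}=2$. (For comparison, the case $d=3$ recovers Lemma~\ref{max-number-of-cells} as already proved.)

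For the inductive step, suppose the claim holds whenever fewer than $n$ pairwise distinct great spheres are considered, in every dimension. Given $G_1,\ldots,G_n$ in $\S^{d-1}$, delete $G_n$; by the inductive hypothesis the remaining arrangement dissects $\S^{d-1}$ into $N'\leq 2\sum_{i=0}^{d-1}\binom{n-2}{i}$ cells. Reintroducing $G_n$, each cell of the coarser arrangement that meets $G_n$ in its relative interior is split into exactly two pieces, since $G_n$ has codimension one in $\S^{d-1}$. Hence the number of newly created cells equals the number $N''$ of regions into which $G_n$ itself is subdivided by the traces $G_n\cap G_j$ for $j<n$. Since $G_n$ is isometric to $\S^{d-2}$ and each proper intersection $G_n\cap G_j$ is a $(d-3)$-dimensional great sphere of $G_n$, the inductive hypothesis applied inside $G_n$ to at most $n-1$ such subspheres yields $N''\leq 2\sum_{i=0}^{d-2}\binom{n-2}{i}$. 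Coincidences or tangencies among the $G_n\cap G_j$ only decrease $N''$ and are absorbed by the monotonicity of the bound in the number of (pairwise distinct) great spheres.

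Combining the two estimates and applying Pascal's identity $\binom{n-1}{i}=\binom{n-2}{i}+\binom{n-2}{i-1}$ (together with an index shift in the second sum and the convention $\binom{n-2}{-1}=0$), one computes
$$N\;\leq\;N'+N''\;\leq\;2\sum_{i=0}^{d-1}\binom{n-2}{i}+2\sum_{i=0}^{d-2}\binom{n-2}{i}\;=\;2\sum_{i=0}^{d-1}\binom{n-1}{i},$$
closing the induction. I expect the principal obstacle to be bookkeeping in the degenerate case, namely verifying the crude splitting inequality $N\leq N'+N''$ and the applicability of the inductive hypothesis on $G_n$ when the intersections $G_n\cap G_j$ fail to be distinct; both reduce to the codimension-one nature of $G_n$ inside $\S^{d-1}$ and the monotonicity of the right-hand side in the number of distinct subspheres, so no separate argument is needed beyond a careful statement.
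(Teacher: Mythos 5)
Your proof is correct, but it takes a genuinely different route from the paper. The paper's argument is not an induction at all: it picks one great sphere $G_1$, centrally projects the remaining $G_2,\dots,G_n$ from $\o$ onto the hyperplane $H$ tangent to $\S^{d-1}$ at a normal vector of $G_1$, observes that the resulting $n-1$ hyperplanes of $H\cong\E^{d-1}$ cut $H$ into exactly $N/2$ cells, and then invokes Buck's classical bound $\binom{n-1}{0}+\dots+\binom{n-1}{d-1}$ for Euclidean hyperplane arrangements. Your double induction on $(n,d)$ stays entirely inside spherical geometry and in effect reproves Buck's bound there; it is the natural generalization of the paper's own proof of Lemma~\ref{max-number-of-cells} (the $d=3$ case, where $N''\leq 2(n-1)$ is exactly your $2\sum_{i=0}^{d-2}\binom{n-2}{i}$), and it is self-contained, at the cost of the bookkeeping you flag. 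Two small points you should make explicit if you write this up: (1) your induction needs the bound for \emph{all} numbers $m\geq 1$ of distinct great subspheres of $G_n\cong\S^{d-2}$, not just $m\geq d-1$ as in the literal statement of the lemma; this is harmless since the bound (which then reads $2^m$) is valid and your base case $n=1$ starts the induction there, but it is a strictly stronger statement than the one quoted. (2) The bottom of the dimension recursion is $d=2$, where the traces $G_n\cap G_j$ are empty and $N''=2$ trivially; this is consistent with your formula but deserves a sentence. The key geometric facts you use -- that each cell is spherically convex, hence split into exactly two pieces by $G_n$, and that the bisected cells correspond injectively to the regions of $G_n$ -- are sound.
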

\begin{proof}
Let $G_1, G_2, \dots , G_n$ be pairwise distinct $(d-2)$-dimensional great spheres of $\S^{d-1}$ ($d\geq 2$) that dissect $\S^{d-1}$ into $N=2k$ $(d-1)$-dimensional cells. Moreover, let $H_1$ be the hyperplane spanned by $G_1$ in  $\E^d$ with normal vector ${\bf n}_1\in \S^{d-1}$ and let $H$ be the hyperplane tangent to $\S^{d-1}$ at the point ${\bf n}_1$. Clearly, $H_1$ and $H$ are parallel and the images under the central projection from $\o$ onto $H$ of the $(d-2)$-dimensional great spheres $G_2, \dots , G_n$ are hyperplanes say, $G'_2, \dots , G'_n$ of the $(d-1)$-dimensional Euclidean space $H$. It follows that the number of $(d-1)$-dimensional cells into which $G'_2, \dots , G'_n$ dissect $H$ is equal to $k$. Now, recall the well-known statement (\cite{Bu}) according to which $k\leq \left(\binom{n-1}{0}+\binom{n-1}{1}+\dots +\binom{n-1}{d-1}\right)$. This finishes the proof of Lemma~\ref{upper-bound-for-N}.
\end{proof}
Assume that there exists an arrangement of $n>d$ (pairwise distinct) $(d-2)$-dimensional great spheres in $\S^{d-1}$ ($d>3$) which dissect $\S^{d-1}$ into the $(d-1)$-dimensional cells $Q_1, Q_2, \dots , Q_N$  such that each $Q_i$ is covered by a spherical cap of radius $R:=R_{\rm gs}(n, \S^{d-1})<\frac{\pi}{2}$. Then, via Lemma~\ref{upper-estimate-for-cell-numbers} the inequality (\ref{upper-bound-for-N}) holds. Next, recall the theorem of Glazyrin (Theorem 1.2 in  \cite{Gl}) stating that for any covering of $\S^{d-1}$ by closed spherical caps of angular radius less than $\frac{\pi}{2}$, the sum of the Euclidean radii of the caps is greater than $d$. Thus, we have
\begin{equation}\label{Glazyrin}
d<N\sin R .
\end{equation}
Hence, (\ref{upper-bound-for-N}) and (\ref{Glazyrin}) imply in a straightforward way that $  \arcsin\left(\frac{d}{2\sum_{i=0}^{d-1}\binom{n-1}{i}}\right)<R$. This completes the proof of Part (ii) of Theorem~\ref{thm:highdim}.

\subsection{Proof of Part (iii)}

By \cite{ball}, for any $0 < \delta < \frac{\pi}{2}$, there is a $\delta$-net $X$ of cardinality at most $\left( \frac{4}{\sin \delta} \right)^d$ on $\S^{d-1}$, or in other words, there is a set $X \subset \S^{d-1}$ of cardinality at most $\left( \frac{4}{\sin \delta} \right)^d$ such that every point of $\S^{d-1}$ is at spherical distance at most $\delta$ from a point of $X$. Thus, for any $n>d\geq 3$, there is a set $X_n \subset \S^{d-1}$ of cardinality at most $n$ such that $X_n$ is a $\delta$-net with $\delta = \arcsin \left( \frac{4}{\sqrt[d]{n}} \right)$. Then every point $\x \in X_n$ is the center of a closed hemisphere bounded by the $(d-2)$-dimensional great sphere $H_{\x}$. Let $\F_n = \{ H_{\x}: \x \in X_n \}$, and let $C$ denote an arbitrary $(d-1)$-dimensional cell of the decomposition of $\S^{d-1}$ by the elements of $\F_n$. 

Let $Y_n = X_n \cup (-X_n)$, and consider an arbitrary point $\p$ in the interior of $C$. We are going to show that the closed spherical cap of radius $2 \delta$ and center $\p$ contains $C$, and note that this implies that the diameter of $C$ is at most $2 \delta$. Let $H_{\p}$ denote the $(d-2)$-dimensional great sphere bounding the closed hemisphere centered at $\p$, and let the set $Y_n(C)$ consist of the points of $Y_n$ separated by $H_{\p}$ from $C$. Note that $Y_n(C)$ is contained in an open hemisphere of $\S^{d-1}$, and its spherical convex hull is the polar of $C$. We denote this set by $C^*$. Clearly, the following are equivalent, where by a $\tau$-neighborhood of a compact set $Z$ we mean the set of points at distance at most $\tau$ from a point of $Z$:
\begin{itemize}
\item the spherical ball of radius $2 \delta$ and center $\p$ contains $C$;
\item the spherical ball of radius $\frac{\pi}{2} - 2\delta$ and center $\p$ is contained in $C^*$;
\item every boundary point of $C^* \cup (-C^*)$ is at spherical distance at most $2 \delta$ from $H_{\p}$, i.e., the $(2\delta)$-neighborhood of $H_{\p}$ contains the boundary of $C^*$;
\item the $(2 \delta)$-neighborhood of $C^*$ contains $H_{\p}$.
\end{itemize}
Now, let $\q$ be an arbitrary point of $H_{\p}$, and let $\q'$ denote the point on $\widehat{(-\p)\q}$ at spherical distance $\delta$ from $H_{\p}$. Since $Y_n$ is a $\delta$-net, the $\delta$-neighborhoods of $C^*$ and $-C^*$ cover $\S^{d-1}$. On the other hand, as $-C^*$ is strictly separated from $\q'$ by $H_{\p}$, it follows that $\q'$ lies in the $\delta$-neighborhood of $C^*$. Thus, by the triangle inequality, $\q$ lies in the $(2\delta)$-neighborhood of $C^*$.

We have shown that the diameter of $C$ is at most $2\delta$. Now, by Jung's theorem for spherical space \cite{jung}, any compact set $Z \subset \S^{d-1}$ of diameter $D$ can be covered by a spherical ball of radius $\arcsin \left( \sqrt{ \frac{2d-2}{d} } \sin \frac{D}{2} \right)$. Replacing $D$ in this formula with $2 \delta = 2 \arcsin \left( \frac{4}{\sqrt[d]{n}} \right)$ completes the proof of Theorem~\ref{thm:highdim}.

\small

\bigskip

%\normalsize

\noindent K\'aroly Bezdek \\
\small{Department of Mathematics and Statistics, University of Calgary, Canada}\\
\small{Department of Mathematics, University of Pannonia, Veszpr\'em, Hungary\\
\small{E-mail: \texttt{bezdek@math.ucalgary.ca}}

\bigskip

\noindent and

\bigskip

\noindent Zsolt L\'angi \\
\small{MTA-BME Morphodynamics Research Group and Department of Geometry}\\ 
\small{Budapest University of Technology and Economics, Budapest, Hungary}\\
\small{\texttt{zlangi@math.bme.hu}}

\end{document}